\numberwithin{equation}{section}
\newcommand{\reff}[1]{{\rm (\ref{#1})}}
\newcommand{\R}{\mathbb{R}}            
\newcommand{\K}{\mathbb{K}}
\newcommand{\I}{\mathbb{I}}
\newcommand{\N}{\mathbb{N}}
\newcommand{\ve}{\varepsilon}          
\newcommand{\bn}{\mathbf n}            
\newcommand{\nabh}{\nabla_{\! h}}
\newcommand{\calG}{\mathcal G}
\newcommand{\cJ}{\mathcal J}
\newcommand{\calC}{\mathcal C}
\newcommand{\ckC}{\check C}
\newcommand{\tdC}{\td C}
\newcommand{\calD}{\mathcal D}
\newcommand{\calA}{\mathcal A}
\newcommand{\calL}{\mathcal L}
\newcommand{\calV}{\mathcal V}
\newcommand{\cphi}{\check \phi}
\newcommand{\crho}{\check \rho}
\newcommand{\x}{\mbox{\boldmath$x$}}
\newcommand{\hf}{\nicefrac{1}{2}}
\newcommand{\nrm}[1]{\left\| #1 \right\|}
\newcommand{\ciptwo}[2]{\left\langle #1 , #2 \right\rangle}
\newcommand\dt {{\Delta t}}
\newcommand{\eipx}[2]{\left[ #1 , #2 \right]_{\rm x}}
\newcommand{\eipy}[2]{\left[ #1 , #2 \right]_{\rm y}}
\newcommand{\eipz}[2]{\left[ #1 , #2 \right]_{\rm z}}
\newcommand{\eipvec}[2]{\left[ #1 , #2 \right]}
\newtheorem{theorem}{Theorem}[section]
\newtheorem{lemma}[theorem]{Lemma}
\newtheorem{proposition}[theorem]{Proposition}
\newtheorem{remark}[theorem]{Remark}
\newenvironment{proof}[1][Proof]{\begin{trivlist}
\item[\hskip \labelsep {\bfseries #1}]}{\end{trivlist}}
\newcommand{\qed}{\nobreak \ifvmode \relax \else
      \ifdim\lastskip<1.5em \hskip-\lastskip
      \hskip1.5em plus0em minus0.5em \fi \nobreak
      \vrule height0.75em width0.5em depth0.25em\fi}
\def\XXint#1#2#3{{\setbox0=\hbox{$#1{#2#3}{\int}$}
\vcenter{\hbox{$#2#3$}}\kern-.51\wd0}}
\newcommand{\td}{\tilde}
\begin{document}

\graphicspath{{Figures/}}

\title{A second-order accurate, original energy dissipative numerical scheme for chemotaxis and its convergence analysis}

\author{Jie Ding\thanks{
 School of Science, Jiangnan University,  Wuxi, Jiangsu, 214122, China. E-mail: jding@jiangnan.edu.cn.}
\and
Cheng Wang\thanks{Department of Mathematics, University of Massachusetts, North Dartmouth, MA  02747, USA. E-mail: cwang1@umassd.edu.}
\and	
Shenggao Zhou\thanks{School of Mathematical Sciences, MOE-LSC, and CMA-Shanghai, Shanghai Jiao Tong University, Shanghai, 200240, China. E-mail: sgzhou@sjtu.edu.cn.}
}
\maketitle

\begin{abstract}
This paper proposes a second-order accurate numerical scheme for the Patlak--Keller--Segel system with various mobilities for the description of chemotaxis. Formulated in a variational structure, the entropy part is novelly discretized by a modified Crank-Nicolson approach so that the solution to the proposed nonlinear scheme corresponds to a minimizer of a convex functional.  A careful theoretical analysis reveals that the unique solvability and positivity-preserving property could be theoretically justified. More importantly, such a second order numerical scheme is able to preserve the dissipative property of the original energy functional, instead of a modified one. To the best of our knowledge, the proposed scheme is the first second-order accurate one in literature that could achieve both the numerical positivity and original energy dissipation. In addition, an optimal rate convergence estimate is provided for the proposed scheme, in which rough and refined error estimate techniques have to be included to accomplish such an analysis. Ample numerical results are presented to demonstrate robust performance of the proposed scheme in preserving positivity and original energy dissipation in blowup simulations.

\vspace{4mm}


\noindent
\textbf{Keywords:} Patlak--Keller--Segel system; second-order accuracy; unique solvability; positivity preservation; original energy dissipation; optimal rate convergence analysis

\noindent
{\bf AMS subject classification}: \, 35K35, 35K61, 65M06, 65M12, 92C17 
\end{abstract}


\section{Introduction}
As a classical chemotaxis model, the Patlak--Keller--Segel (PKS) system is often used to describe the evolution of living organisms interacting with environmental signals~\cite{Patlak_BMB1953, KellerSegel_JTB1971}:
\begin{equation}\label{PKSeqns}
\left\{
\begin{aligned}
\partial_t \rho&=\gamma\Delta \rho-\chi\nabla\cdot(\eta(\rho)\nabla\phi),\\
\theta\partial_t\phi&=\mu\Delta\phi-\alpha\phi+\chi\rho.
\end{aligned}
\right.
\end{equation}
Here $\rho$ is the density distribution of living organisms, $\phi$ stands for the density of the chemical signals, $\gamma$, $\mu$, and $\alpha$ are three positive constants, $\chi$ denotes the chemotactic sensitivity, $\eta(\rho)$ is the density-dependent mobility, and $\theta\geq 0$ describes how fast chemical signals response to living organisms. To address the confinement effect in a bounded domain $\Omega$, the PKS system is prescribed with homogeneous Neumann boundary condition:
\begin{equation} \label{BCs}
\nabla\rho\cdot\bn=\nabla\phi\cdot\bn=0,~~~\mbox{on}~~\partial\Omega.
\end{equation}

The PKS system describes the diffusion of living organisms and aggregation induced by chemical signals.  In particular, the nonlinear term $-\chi\nabla\cdot(\eta(\rho)\nabla\phi)$ models organism movement towards higher density of chemical signals.
It is well-known that the solution of the classical PKS system~\reff{PKSeqns} with $\eta(\rho)=\rho$ may blow up in finite time. Many efforts have been devoted to mathematical analysis on blow-up solutions~\cite{Nagai_JIA2001, Horstmann_DMV2004, Horstmann_DMV2003, Perthame_BB2006, NagaiSenbaYoshida_FE1997, BlanchetCarlenCarrillo_JFA2012}.  According to the homogeneous Neumann boundary condition~\reff{BCs}, the total mass is conserved in the PKS system. There is a certain critical threshold value for initial total mass, by which the finite-time blow up solution and globally existent solution can be distinguished~\cite{NagaiSenbaYoshida_FE1997, BlanchetCarlenCarrillo_JFA2012, BlanchetDolbeaultPerthame_EJDE2006, LiuWang_AML2016, CalvezCorrias_CMS2008, JgerLuckhaus_TAMS1992}. However, the density of living organisms does not blow up in reality, rather exhibiting density peaks with difference of several orders in magnitude. Modified models with various $\eta(\rho)$ have been proposed to capture such a phenomenon \cite{Kurganov_DCDSS2014}. 

Many numerical methods have been proposed for the PKS system in various chemotaxis applications \cite{AcostaGonzalezGalvan_JSC2023, BadiaBonillaSantacreu_MMMAS2023, WangZhouShiChen_JCP2022, Epshteyn_JSC2012, ShenXu_2020SINA, LiuWangZhou_2016MC, Filbet_NM2006, ZhouSaito_NM2017, ChatardJungel_IMA2012}. The solution to the PKS system has several properties of great physical significance, such as mass conservation, positivity for cell density, and energy dissipation. Shen and Xu develop unconditionally energy-stable schemes that preserve positivity/bounds for the PKS equations~\cite{ShenXu_2020SINA}. Based on the scalar auxiliary variable (SAV) approach,  a high-order, linear, positivity/bound preserving and unconditionally energy-stable scheme has also been developed in~\cite{HuangShen_2021SISC}. Based on the Slotboom formulation,  a positivity-preserving and asymptotic preserving scheme has also been constructed for the PKS system in 2D~\cite{LiuWangZhou_2016MC}.  
On the other hand, second-order positivity-preserving central-upwind schemes have been developed for chemotaxis models, by using the finite volume method~\cite{ChertockKurganov_NM2008} and discontinuous Galerkin approach~\cite{EpshteynKurganov_SINM2008, EpshteynIzmirlioglu_JSC2009}. 
An implicit finite volume scheme has been proposed in~\cite{Filbet_NM2006}, in which the existence of a positive solution is established under certain restrictions. Bessemoulin-Chatard and J${\rm \ddot{u}}$ngel have also constructed a finite volume scheme for the PKS model \cite{ChatardJungel_IMA2012}, with an additional cross-diffusion term in the second equation of \reff{PKSeqns}. The positivity-preserving, mass conservation, entropy stability, and the well-posedness of the nonlinear scheme have been proved in the work. Zhou and Saito have introduced a linear finite volume scheme that satisfies both positivity and mass conservation requirements~\cite{ZhouSaito_NM2017}.

Because of the non-constant mobilities, the numerical design of a second order accurate in time, energy-stable algorithm for the PKS system turns out to be very challenging. In this work, we propose a novel second-order (in time) numerical scheme for the PKS equations. The standard Crank-Nicolson approximation is applied to the chemoattractant evolution equation, while the variational structure of the density equation is used to facilitate the numerical design. In more details, the mobility function is computed by an explicit second order extrapolation formula, and such an explicit treatment will be useful in the unique solvability analysis. On the other hand, a singular logarithmic term appears in the chemical potential, and poses a great challenge for second-order temporal discretization to ensure the theoretical properties at a discrete level. To overcome this subtle difficulty, we approximate the logarithmic term by a careful Taylor expansion, up to the second order accuracy. The unique solvability and positivity-preserving property of such a highly nonlinear and singular numerical system is theoretically established, in which the convex and singular nature of the implicit terms play a very important role in the theoretical analysis; see the related works for various gradient flow models with singular energy potential~\cite{ChenJingWangWang_JSC22, ChenJingWangWangWise_CiCP22, ChenWangWangWise_JCP2019, DongWangWiseZhang_JCP21, DongWangZhangZhang_CMS19, DongWangZhangZhang_CiCP20, YuanChenWangWiseZhang_JSC21, ZhangWangWiseZhang_SISC20}. This approach also avoids a nonlinear artificial regularization term in the numerical design. More importantly, a careful nonlinear analysis reveals a dissipation property of the original free energy functional, instead of a modified energy energy reported in many existing works for multi-step numerical schemes~\cite{ShenXu_2020SINA,HuangShen_2021SISC}. This turns out to be a remarkable theoretical result for a second order accurate scheme.  


It is observed that, a highly nonlinear formulation in the numerical system, which is designed to accomplish certain structure-preserving properties at a discrete level, often  poses a challenging task for a rigorous convergence analysis. In this work, an optimal rate convergence analysis is performed for the proposed second-order numerical scheme. Due to the non-constant mobility nature, together with the highly nonlinear and singular properties of the logarithmic terms, such an optimal rate convergence analysis for the PKS equations turns out to be a very complicated issue. To overcome this difficulty, several highly non-standard techniques have to be introduced. A careful linearization expansion is required for the higher-order asymptotic analysis of the numerical solution, up to the fourth order accuracy in both time and space. Such a higher-order asymptotic expansion enables one to derive a maximum norm bound for the density variable, based on a rough error estimate. Subsequently, the corresponding inner product between the discrete temporal derivative of the numerical error function and the numerical error associated with the logarithmic terms becomes a discrete derivative of certain nonlinear, non-negative functional in terms of the numerical error functions, along with a few numerical perturbation terms. Consequently, all the major challenges in the nonlinear analysis of the second-order accurate scheme could be overcome, and the associated error estimate could be carefully derived. 
To our knowledge, this is the first work to combine three theoretical properties for second-order accurate numerical schemes for the PKS system: positivity-preservation, original energy dissipation, and optimal rate convergence analysis.

The rest of the paper is organized as follows. In Section 2,  the PKS system for chemotaxis is introduced, and the associated physical properties are recalled. Subsequently, a second-order accurate numerical scheme is proposed in Section 3. Afterward, the structure-preserving properties of the proposed numerical scheme, such as mass conservation, unique solvability, positivity-preserving property, and the original energy dissipation, are proved in Section 4. In addition, the optimal rate convergence analysis is presented in Section 5. Some numerical results are provided in Section 6. Finally, some concluding remarks are given in Section 7.

\section{Chemotaxis models}
For the PKS system~\eqref{PKSeqns}, the following free energy is considered:
\begin{equation} 
F(\rho,\phi)=\int_{\Omega}\left[\gamma f(\rho)-\chi\rho\phi+\frac{\mu}{2}|\nabla\phi|^2+\frac{\alpha}{2}\phi^2\right]d\x . \label{energy-0} 
\end{equation} 
For simplicity of presentation, the notation $\eta(\rho)=\frac{1}{f''(\rho)}$ is introduced. With an alternate representation formula $\Delta\rho=\nabla\cdot(\frac{1}{f''(\rho)}\nabla f'(\rho))$~\cite{ShenXu_2020SINA}, the PKS system could be rewritten as
\begin{equation}\label{KSeqn}
\left\{
\begin{aligned}
\partial_t \rho&=\nabla\cdot\left(\frac{1}{f''(\rho)}\nabla\left[\gamma f'(\rho)-\chi\phi\right]\right)=\nabla\cdot\left(\frac{1}{f''(\rho)}\nabla\frac{\delta F}{\delta \rho}\right),\\
\theta\partial_t\phi&=\mu\Delta\phi-\alpha\phi+\chi\rho=-\frac{\delta F}{\delta \phi}.
\end{aligned}
\right.
\end{equation}
The homogeneous Neumann boundary condition~\eqref{BCs} is imposed for both physical variables.

In this work, we consider three typical choices of the entropy function $f(\rho)$ and the corresponding mobility function $\eta(\rho)$ (see the related examples in \cite{ShenXu_2020SINA, HillenPainter_JMB2009, Perthame_BB2006}): 
\begin{itemize}
\item (i) The classical PKS system:
\begin{equation}\label{f_1}
\eta(\rho)=\rho,~~ f(\rho)=\rho(\ln \rho-1) , \quad  \mbox{for $\rho\in \I= (0,\infty)$} ; 
\end{equation}
\item (ii) PKS system with a bounded mobility \cite{Velazquez_SIAM2004_1, Velazquez_SIAM2004_2}:
\begin{equation}\label{f_2}
\eta(\rho)=\frac{\rho}{\kappa\rho+1}(\kappa>0), ~~f(\rho)=\rho(\ln \rho-1)+\kappa \rho^2/2 , \quad  \mbox{for $\rho\in \I= (0,\infty)$} ;
\end{equation}
\item (iii) PKS system with a saturation density \cite{HillenPainter_2001, DolakSchmeiser_SIAM2005}:
\begin{equation}\label{f_3}
\eta(\rho)=\rho(1-\rho/M)(M>0), ~~f(\rho)=\rho \ln \rho+(M-\rho) \ln (1-\rho/M) , \quad 
 \mbox{for $\rho\in \I= (0, M)$} , 
\end{equation}
where $M$ is the saturation density.
\end{itemize}
For cases (i) and (ii), the solution of the PKS equations requires the positivity of the density variable, while in the case (iii),  a bound $0 < \rho < M$ is needed. 

With homogeneous Neumann boundary condition, the analytical solution to \reff{PKSeqns} has three properties of physical importance:
\begin{itemize}
\item  Mass conservation: the total density remains constant over time, i.e.,\\
\[
\int_{\Omega} \rho(\x,t)d\x=\int_{\Omega} \rho(\x,0)d\x , \quad \forall t > 0 ;
\]
\item Bound/Positivity: the organism density is positive, i.e.,\\
\[
\rho(\x,t)\in \I,~~\mbox{if}~~ \rho(\x,0)\in \I,~~\mbox{for}~\x\in\Omega,~\forall t>0;
\]
\item Free-energy dissipation: the free-energy~\reff{energy-0} decays in time \cite{BlanchetCarrilloLisini_M2AN2014, CongLiu_DCDSB2016}\\
\[
\begin{aligned}
\frac{dF}{dt}=-\int_{\Omega}\left[\frac{1}{f''(\rho)}\left(\nabla\frac{\delta F}{\delta\rho}\right)^2+\frac{1}{\theta}\left(\frac{\delta F}{\delta\phi}\right)^2 \right]d\x \leq 0 ,~~\mbox{for}~ \theta >0. 
\end{aligned}
\]

\end{itemize}

\section{The numerical scheme}

\subsection{Some notations}
For simplicity, a cubic rectangular computational domain $\Omega= (a, b)^3$ is considered, with homogeneous Neumann boundary condition. Let $N\in \mathbb{N}^*$ be the number of grid points along each dimension, and $h=(b-a)/N$ be the uniform grid spacing size. The computational domain is covered by the cell-centered grid points
\[
\left\{x_i, y_j, z_k\right\}= \left\{a+(i-\frac{1}{2})h, a+(j-\frac{1}{2})h, a+(k-\frac{1}{2})h\right\} , 
\]
for $i,j,k= 1, \cdots, N$.
Denote by $\rho_{i,j,k}$ and $\phi_{i,j,k}$ the discrete approximations of $\rho(x_i,y_j,z_k, \cdot)$ and $\phi(x_i,y_j,z_k, \cdot)$, respectively.

The standard discrete operators and notations are recalled in the finite difference discretization~\cite{wise09a, wang11a}. The following grid function spaces, with homogeneous Neumann boundary condition, are introduced:
\begin{equation}
\begin{aligned}
\mathcal{C}:=&\big\{u\big|u_{m N,j,k}=u_{1+m N,j,k},~u_{i,m N,k}=u_{i,1+m N,k},~u_{i,j,m N}=u_{i,j,1+m N}~~ \\
&\qquad \forall i,j,k=1,2,\cdots,N, m=0,1 \big\},\\
\end{aligned}
\end{equation}
\[
\mathring{\mathcal{C}}:=\bigg\{u\in \mathcal{C}\bigg| \overline{u}=0 \bigg\}, \quad
 \overline{u}=\frac{h^3}{|\Omega|}\sum_{i,j,k=1}^N u_{i,j,k} .
\]
Meanwhile, the average and difference operators in the $x$-direction are given by 
	\begin{eqnarray*}
&& A_x f_{i+\hf,j,k} := \frac{1}{2}\left(f_{i+1,j,k} + f_{i,j,k} \right), \quad D_x f_{i+\hf,j,k} := \frac{1}{h}\left(f_{i+1,j,k} - f_{i,j,k} \right), \\
&& a_x f_{i, j, k} := \frac{1}{2}\left(f_{i+\hf, j, k} + f_{i-\hf, j, k} \right),	 \quad d_x f_{i, j, k} := \frac{1}{h}\left(f_{i+\hf, j, k} - f_{i-\hf, j, k} \right).
	\end{eqnarray*}
Average and difference operators in $y$ and $z$ directions, denoted by $A_y$, $A_z$, $D_y$, $D_z$, $a_y$, $a_z$, $d_y$, and $d_z$, could be analogously defined. The discrete gradient and discrete divergence become 
	\[
	\begin{aligned}
&\nabh f_{i,j,k} =\left( D_xf_{i+\hf, j, k},  D_yf_{i, j+\hf, k},D_zf_{i, j, k+\hf}\right) , \\
&\nabh\cdot\vec{f}_{i,j,k} = d_x f^x_{i,j,k}	+ d_y f^y_{i,j,k} + d_z f^z_{i,j,k},
\end{aligned}
	\]
where $\vec{f} = (f^x,f^y,f^z)$, with $f^x$, $f^y$ and $f^z$ evaluated at $(i+1/2,j,k), (i,j+1/2,k), (i,j,k+1/2)$, respectively. The standard discrete Laplacian turns out to be 
	\begin{align*}
\Delta_h f_{i,j,k} := & \nabla_{h}\cdot\left(\nabla_{h}f\right)_{i,j,k} =  d_x(D_x f)_{i,j,k} + d_y(D_y f)_{i,j,k}+d_z(D_z f)_{i,j,k}.
	\end{align*}
Similarly, for a scalar function $\mathcal{D}$ that is defined at face center points, 
we have
	\[
\nabla_h\cdot \big(\mathcal{D} \vec{f} \big)_{i,j,k} = d_x\left(\mathcal{D}f^x\right)_{i,j,k}  + d_y\left(\mathcal{D}f^y\right)_{i,j,k} + d_z\left(\mathcal{D}f^z\right)_{i,j,k} .
	\]
If $f\in \mathcal{C}$, then $\nabla_h \cdot\left(\mathcal{D} \nabla_h  \cdot \right):\mathcal{C} \rightarrow \mathcal{C}$ becomes
	\[
\nabla_h\cdot \big(\mathcal{D} \nabla_h f \big)_{i,j,k} = d_x\left(\mathcal{D}D_xf\right)_{i,j,k}  + d_y\left(\mathcal{D} D_yf\right)_{i,j,k} + d_z\left(\mathcal{D}D_zf\right)_{i,j,k} .
	\]
	
For $f,g\in\calC$, the discrete $L^2$ inner product is defined as
\[
\ciptwo{f}{\xi}  := h^3\sum_{i,j,k=1}^N  f_{i,j,k}\, \xi_{i,j,k},\quad f,\, \xi\in {\mathcal C}.
\]	
Similarly, for two vector grid functions $\vec{f}_i = (f_i^x,f_i^y,f_i^z)$, $i=1,2$, evaluated at $(i+1/2,j,k), (i,j+1/2,k), (i,j,k+1/2)$, respectively, the corresponding discrete inner product is given by
	\begin{equation*}
	\begin{aligned}
&\eipx{f}{\xi} := \langle a_x(f\xi) , 1 \rangle ,\quad
\eipy{f}{\xi} := \langle a_y(f\xi) , 1 \rangle ,\quad \\
&\eipz{f}{\xi} := \langle a_z(f\xi) , 1 \rangle ,\quad
[ \vec{f}_1 , \vec{f}_2 ] : = \eipx{f_1^x}{f_2^x}	+ \eipy{f_1^y}{f_2^y} + \eipz{f_1^z}{f_2^z}.
	\end{aligned}
	\end{equation*}	
In turn, the following norms could be introduced for $f\in {\mathcal C}$: $\nrm{f}_2^2 := \langle f , f \rangle$, $\nrm{f}_p^p := \ciptwo{|f|^p}{1}$, with $1\le p< \infty$, and $\nrm{f}_\infty := \max_{1\le i,j,k\le N}\left|f_{i,j,k}\right|$. The gradient norms are defined as
	\begin{eqnarray*}
\nrm{ \nabla_h f}_2^2 &: =& \eipvec{\nabh f }{ \nabh f } = \eipx{D_xf}{D_xf} + \eipy{D_yf}{D_yf} +\eipz{D_zf}{D_zf},  \quad \forall \,  f \in{\mathcal C} ,
	\\
\nrm{\nabla_h f}_p^p &:=&  \eipx{|D_xf|^p}{1} + \eipy{|D_yf|^p}{1} +\eipz{|D_zf|^p}{1}   , \quad \forall \, f \in{\mathcal C}, \quad  1\le p<\infty .
	\end{eqnarray*}
The higher-order norms could be similarly introduced:
	\[
\nrm{f}_{H_h^1}^2 : =  \nrm{f}_2^2+ \nrm{ \nabla_h f}_2^2, \quad \nrm{f}_{H_h^2}^2 : =  \nrm{f}_{H_h^1}^2  + \nrm{ \Delta_h f}_2^2  , \quad \forall \, f \in{\mathcal C}.
	\]
	
We now define a discrete analogue of the space $H^{-1}(\Omega)$. Consider a positive, scalar function $\calD$. For any $g\in\mathring{\calC}$, there exists a unique solution $f\in\mathring{\calC}$ to the equation
\[
\calL_{\calD} f:=-\nabla_h\cdot(\calD\nabla_h f)=g,
\]	
with discrete homogeneous Neumann boundary condition 
\[
f_{m N,j,k}=f_{1+m N,j,k},~f_{i,m N,k}=f_{i,1+m N,k},~f_{i,j,m N}=f_{i,j,1+m N}~~\mbox{for}~i,j,k=1,\cdots,N,~m=0,1.\\
\]
Then the following discrete norm could be introduced:
\[
\|g \|_{\calL^{-1}_{\calD}}=\sqrt{\ciptwo{g}{\calL^{-1}_{\calD}(g) }}.
\]
In particular, if $\calD=1$, we have $\calL_{1} f=-\Delta_h f$, and a discrete $\| \cdot \|_{-1,h}$ norm becomes
\[
\|g \|_{-1,h}=\sqrt{\ciptwo{g}{(-\Delta_h)^{-1}(g)}}.
\]

\begin{lemma}\cite{wang11a, wise09a, guo16}
For any $\phi_1$, $\phi_2$, $\phi_3$, $g\in\calC$, and any $\vec{f} = (f^x,f^y,f^z)$, with $f^x$, $f^y$ and $f^z$ evaluated at $(i+1/2,j,k), (i,j+1/2,k), (i,j,k+1/2)$, respectively, the following summation-by-parts formulas are valid:
\[
\ciptwo{\phi_1}{\nabla_h\cdot\vec{f}}=-[\nabla_h\phi_1,\vec{f}],~~\ciptwo{\phi_2}{\nabla_h\cdot(g\nabla_h\phi_3)}=-[\nabla_h\phi_2,\calA_h(g)\nabla_h \phi_3],
\]
in which $\calA_h$ corresponds to the average operator given by $A_x$, $A_y$, and $A_z$.
\end{lemma}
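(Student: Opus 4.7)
The plan is to reduce the three-dimensional identities to three one-dimensional telescoping sums, one per coordinate direction, and to handle each by a standard discrete Abel summation, absorbing the boundary contributions by means of the Neumann condition encoded in the grid-function space $\mathcal{C}$.

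For the first identity, I would split the left-hand side by direction,
\[
\ciptwo{\phi_1}{\nabla_h\cdot\vec{f}} = h^3\sum_{i,j,k}\phi_{1;i,j,k}\bigl(d_x f^x + d_y f^y + d_z f^z\bigr)_{i,j,k},
\]
and focus on the $x$-contribution. Substituting $d_x f^x_{i,j,k}=h^{-1}(f^x_{i+\hf,j,k}-f^x_{i-\hf,j,k})$ and applying Abel summation on $i$ for each fixed $(j,k)$ converts this piece into $-h^3\sum_{i,j,k} f^x_{i+\hf,j,k} D_x\phi_{1;i+\hf,j,k}$ together with surface terms supported on $i=\hf$ and $i=N+\hf$. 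On the other hand, unfolding $\eipx{D_x\phi_1}{f^x}=\langle a_x(D_x\phi_1\cdot f^x),1\rangle$ and reindexing the two half-shifts generated by $a_x$ reproduces the same interior bulk sum plus half-weighted endpoint contributions involving $D_x\phi_{1;\hf,j,k}$ and $D_x\phi_{1;N+\hf,j,k}$. Because $\phi_1\in\mathcal{C}$ enforces $\phi_{1;0,j,k}=\phi_{1;1,j,k}$ and $\phi_{1;N,j,k}=\phi_{1;N+1,j,k}$, each such boundary difference vanishes, so the two sides agree; the $y$ and $z$ directions are handled identically.

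The second identity follows from the first by specialization. Since $g\nabla_h\phi_3$ has components $gD_x\phi_3$, $gD_y\phi_3$, $gD_z\phi_3$ supported on the correct face centers, setting $\vec{f}:=g\nabla_h\phi_3$ and applying the first identity yields
\[
\ciptwo{\phi_2}{\nabla_h\cdot(g\nabla_h\phi_3)} = -[\nabla_h\phi_2,\,g\nabla_h\phi_3].
\]
Rewriting the right-hand pairing in the form $[\nabla_h\phi_2,\calA_h(g)\nabla_h\phi_3]$ merely records the fact that $g$ already lives on face centers, so its values $g_{i+\hf,j,k}$, $g_{i,j+\hf,k}$, $g_{i,j,k+\hf}$ are precisely what the averages $A_x$, $A_y$, $A_z$ would produce from a cell-centered $g$. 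The Neumann condition on $\phi_3$, which forces $D_x\phi_3=D_y\phi_3=D_z\phi_3=0$ at the physical boundary faces, again eliminates any residual surface contribution introduced along the way.

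The only delicate point is purely bookkeeping: aligning the half-integer face indices produced by $d_x$ and $D_x$ with the integer cell-centered indices of $\ciptwo{\cdot}{\cdot}$, and tracking the half-weighting hidden inside the average operator $a_x$ in the definition of $\eipx{\cdot}{\cdot}$. Once the directional Neumann equalities are invoked at each endpoint, every surface term collapses and the identity reduces to the symmetric bulk sum $[\nabla_h\phi_1,\vec{f}]$, respectively $[\nabla_h\phi_2,\calA_h(g)\nabla_h\phi_3]$, as required.
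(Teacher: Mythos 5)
Your overall strategy---directional splitting, a one-dimensional Abel summation in each direction, and deducing the second identity from the first by taking $\vec{f}=\calA_h(g)\nabla_h\phi_3$---is exactly the standard argument behind this lemma; the paper itself supplies no proof and simply cites \cite{wang11a, wise09a, guo16}, where this is how it is done. However, there is a concrete gap in your boundary-term accounting for the first identity. Summing by parts on the left-hand side in the $x$-direction produces the bulk sum $-h^3\sum_{j,k}\sum_{i=1}^{N-1}D_x\phi_{1;i+\hf,j,k}\,f^x_{i+\hf,j,k}$ \emph{plus} the surface terms $h^2\sum_{j,k}\bigl(\phi_{1;N,j,k}f^x_{N+\hf,j,k}-\phi_{1;1,j,k}f^x_{\hf,j,k}\bigr)$. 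These are not ``boundary differences'' of $\phi_1$: they involve the normal component of $\vec{f}$ on the boundary faces multiplied by values (not increments) of $\phi_1$, and the ghost-cell identities $\phi_{1;0,j,k}=\phi_{1;1,j,k}$, $\phi_{1;N,j,k}=\phi_{1;N+1,j,k}$ do nothing to kill them. Only the half-weighted endpoint contributions coming from unfolding $a_x$ on the right-hand side involve $D_x\phi_1$ at the faces $i=\hf$ and $i=N+\hf$ and genuinely vanish for $\phi_1\in\calC$. Indeed, for a truly arbitrary face-centered $\vec{f}$ the first identity is false: with $N=1$ the space $\calC$ forces $\phi_1$ to be constant, the right-hand side is $0$, yet the left-hand side equals $\phi_1\,(f^x_{\nicefrac{3}{2}}-f^x_{\nicefrac{1}{2}})\neq 0$ in general.

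The fix is to make explicit the implicit convention of the cited references: $\vec{f}$ is assumed to have vanishing normal component on the boundary faces ($f^x_{\hf,j,k}=f^x_{N+\hf,j,k}=0$, and analogously in $y$ and $z$), which is satisfied by every $\vec{f}$ to which the lemma is applied in this paper, since they all take the form $\mathcal{D}\nabla_h\phi_3$ with $\phi_3\in\calC$, so that $f^x=\mathcal{D}\,D_x\phi_3$ vanishes on the boundary faces. With that hypothesis stated, your Abel-summation computation closes correctly, and your reduction of the second identity to the first is then legitimate for exactly this reason. One further small point: your remark that ``$g$ already lives on face centers'' contradicts the hypothesis $g\in\calC$; the correct reading is that the cell-centered $g$ is first averaged onto the faces by $\calA_h$ (i.e., $\nabla_h\cdot(g\nabla_h\phi_3)$ is shorthand for $\nabla_h\cdot(\calA_h(g)\nabla_h\phi_3)$, consistent with the paper's definition of $\nabla_h\cdot(\mathcal{D}\nabla_h f)$ for face-centered $\mathcal{D}$), after which the first identity applies verbatim.
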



\subsection{Second-order accurate numerical scheme}
 A uniform time step size $\Delta t$ is taken, so that $t_n=t_{n-1}+\Delta t$. For $\rho^n$, $\phi^n\in\calC$, the PKS system~\reff{PKSeqns} is discretized by
\begin{align}
\frac{\rho^{n+1}-\rho^n}{\Delta t}=&\nabla_h\cdot \Big[ \frac{1}{f''(\hat{\rho}^{n+\frac{1}{2}})}\nabla_h \Big(\gamma S^{n+\frac{1}{2}}-\frac{\chi}{2}(\phi^{n+1}+\phi^n) 
+ \frac{\chi^2 \dt}{4\theta}  (\rho^{n+1}-\rho^n) \Big) \Big],\label{PKS2nd_1}\\
\theta\frac{\phi^{n+1}-\phi^n}{\Delta t}=&\frac{\mu}{2}\Delta_h(\phi^n+\phi^{n+1})-\frac{\alpha}{2}(\phi^{n+1}+\phi^n)+\frac{\chi}{2}(\rho^{n+1}+\rho^n),\label{PKS2nd_2}
\end{align}
where
\begin{equation}\label{Nolite}
S^{n+\frac{1}{2}}=f'(\rho^{n+1})-\frac{1}{2}f''(\rho^{n+1})(\rho^{n+1}-\rho^n)+\frac{1}{6}f'''(\rho^{n+1})(\rho^{n+1}-\rho^n)^2 .
\end{equation}
In particular, the mobility function at $t_{n+\frac12}$, namely $\frac{1}{f''(\hat{\rho}^{n+\frac{1}{2}})}$, is approximated by  
\begin{equation}\label{MidAp}
\hat{\rho}^{n+\frac{1}{2}}=\Big((\frac{3}{2}\rho^{n}-\frac12\rho^{n-1})^2+\dt^8 \Big)^{\frac12} ,
\end{equation}
to ensure both the positivity and a higher order consistency. See related derivation in~\cite{LiuWang_JSC2023}. 

Define two linear, invertible operators
\[
\calL_1=\frac{\theta}{\dt}+\frac{\alpha}{2}-\frac{\mu}{2}\Delta_h,~~\calL_2=\frac{\theta}{\dt}-\frac{\alpha}{2}+\frac{\mu}{2}\Delta_h.
\]
In turn, the proposed numerical scheme could be rewritten as
\begin{equation}\label{Ma_PKS}
\left\{
\begin{aligned}
\frac{\rho^{n+1}-\rho^n}{\Delta t}=&\nabla_h\cdot\Big[ \frac{1}{f''(\hat{\rho}^{n+\frac{1}{2}})}\nabla_h \Big( \gamma S^{n+\frac{1}{2}} - \Big(\frac{\chi}{2}\calL_1^{-1}\calL_2\phi^{n}+\frac{\chi^2}{4}\calL_1^{-1}\rho^n+\frac{\chi}{2} \phi^n \\ 
 & \qquad\qquad\qquad\qquad
 +  \frac{\chi^2 \dt}{4\theta} \rho^n \Big)+\calG_h \rho^{n+1} \Big) \Big] ,\\
\calL_1\phi^{n+1}=&\calL_2\phi^n+\frac{\chi}{2}(\rho^{n+1}+\rho^n) ,
\end{aligned}
\right.
\end{equation}
with $\calG_h:= \frac{\chi^2 \dt}{4\theta}  -\frac{\chi^2}{4}\calL_1^{-1}$, and homogeneous boundary conditions are imposed.

\begin{remark}
A stabilization term $\frac{\chi^2 \dt}{4\theta+ 2\alpha \dt}  (\rho^{n+1}-\rho^n)$, instead of $\frac{\chi^2 \dt}{4\theta}  (\rho^{n+1}-\rho^n) $ in \reff{PKS2nd_1}, could be used in the numerical scheme for a small positive $\theta$, and the theoretical analysis on structure-preserving properties and convergence could still go through. 
\end{remark}

\section{Structure-preserving properties}
In this section, we prove the mass conservation, unique solvability, positivity-preserving properties of the second-order numerical scheme, as well as an unconditional dissipation of the original free energy functional, at the discrete level.

\begin{theorem}\label{th1}
{\bf (Mass conservation)} The second-order accurate numerical scheme (\ref{PKS2nd_1}-\ref{PKS2nd_2}) respects a discrete mass conservation law:
\[
\ciptwo{\rho^{n+1}}{1}=\ciptwo{\rho^{n}}{1}.
\]
\end{theorem}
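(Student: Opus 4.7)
The plan is to take the discrete $L^2$ inner product of the density update equation \reff{PKS2nd_1} with the constant grid function $1 \in \calC$ and exploit the discrete divergence structure of the right-hand side together with the homogeneous Neumann boundary conditions.

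More concretely, first I would rewrite \reff{PKS2nd_1} as
\[
\rho^{n+1} - \rho^n = \Delta t \, \nabla_h\cdot \vec{J}^{\,n+\frac12},
\]
where the flux is
\[
\vec{J}^{\,n+\frac12} = \frac{1}{f''(\hat{\rho}^{n+\frac{1}{2}})}\nabla_h \Big(\gamma S^{n+\frac{1}{2}}-\tfrac{\chi}{2}(\phi^{n+1}+\phi^n) + \tfrac{\chi^2 \dt}{4\theta}(\rho^{n+1}-\rho^n) \Big).
\]
Taking the discrete inner product with $1$, applying the summation-by-parts identity from the lemma preceding the theorem (with $\phi_2 = 1$ and $g = 1/f''(\hat\rho^{n+\frac12})$), and noting that $\nabla_h 1 \equiv 0$, I obtain
\[
\ciptwo{\rho^{n+1} - \rho^n}{1} = \Delta t \, \ciptwo{1}{\nabla_h\cdot \vec{J}^{\,n+\frac12}} = -\Delta t \, \eipvec{\nabla_h 1}{\vec{J}^{\,n+\frac12}} = 0.
\]
The desired identity $\ciptwo{\rho^{n+1}}{1} = \ciptwo{\rho^n}{1}$ then follows immediately.

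There is essentially no obstacle here: the only subtlety is to confirm that the summation-by-parts formula is legitimately applicable, which requires the flux $\vec{J}^{\,n+\frac12}$ to satisfy no-flux boundary conditions at the discrete level. This is ensured by the definition of the grid function space $\calC$ (the periodic-like extension $u_{mN,j,k} = u_{1+mN,j,k}$ built into $\calC$ implements the discrete Neumann condition), so that the half-integer normal differences $D_x, D_y, D_z$ of the bracketed quantity vanish across $\partial\Omega$, and the mobility factor $1/f''(\hat\rho^{n+\frac12})$ does not disturb this cancellation. Note that the argument makes no use of the chemical signal equation \reff{PKS2nd_2}, the specific form of $S^{n+\frac12}$ in \reff{Nolite}, or the positivity of $\rho^{n+1}$; mass conservation is purely a consequence of the conservative (divergence) form of the density flux and the homogeneous Neumann boundary condition.
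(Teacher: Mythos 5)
Your proposal is correct and follows essentially the same route as the paper, which simply notes that summing \reff{PKS2nd_1} over all grid points and invoking the discrete homogeneous Neumann boundary conditions annihilates the divergence-form right-hand side. Your version merely spells out this summation as the inner product with $1$ together with the summation-by-parts identity, so no further comment is needed.
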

Such a mass conservation identity is obtained by applying the summation on both sides, and the discrete homogeneous Neumann boundary conditions have been used.

The free energy is discretized as
\begin{equation}\label{EnergyKS1}
F_h(\rho^n,\phi^n)=\gamma\ciptwo{f(\rho^n)}{1}-\chi\ciptwo{\rho^n}{\phi^n}+\frac{\mu}{2}\| \nabla_h\phi^n\|^2_2+\frac{\alpha}{2}\| \phi^n\|^2_2,
\end{equation}
which turns out to be a second-order approximation to the continuous version of the energy.

 Meanwhile, the following monotonicity property is needed in the unique solvability analysis.

\begin{lemma}\label{lem:mono}
The linear operator $\calG_h$ satisfies the monotonicity condition:
\begin{equation} \label{lem:mono-0}
\ciptwo{\calG_h(\eta_1)-\calG_h(\eta_2)}{\eta_1-\eta_2}\geq 0 , ~~ \mbox{for }~\eta_1,\eta_2\in\calC.
\end{equation}
Furthermore, the equality is valid if and only if $\td\eta=0$, i.e., $\eta_1=\eta_2$, if $\overline{\eta_1}=\overline{\eta_2}=0$ is required. Therefore, the operator $\calG_h$ is invertible.
\end{lemma}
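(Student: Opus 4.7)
The plan is to exploit linearity and self-adjointness. Since $\calG_h$ is linear, setting $\eta = \eta_1 - \eta_2$, the inequality \reff{lem:mono-0} reduces to showing $\ciptwo{\calG_h \eta}{\eta}\ge 0$ for every $\eta\in\calC$. I would first perform an algebraic simplification of $\calG_h$ that exposes its sign. Writing $B:=\alpha I - \mu \Delta_h$ so that $\calL_1 = \tfrac{\theta}{\dt} I + \tfrac{1}{2} B$, a direct computation gives
\[
\frac{\dt}{\theta} I - \calL_1^{-1} = \calL_1^{-1}\Big(\frac{\dt}{\theta}\calL_1 - I\Big) = \calL_1^{-1}\cdot\frac{\dt}{2\theta}\,B,
\]
and therefore
\[
\calG_h = \frac{\chi^2}{4}\Big(\frac{\dt}{\theta} I - \calL_1^{-1}\Big) = \frac{\chi^2 \dt}{8\theta}\,\calL_1^{-1} B .
\]

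Next I would use that $\calL_1$ and $B$ are both polynomials in $\Delta_h$, hence they commute and are both self-adjoint with respect to $\ciptwo{\cdot}{\cdot}$. Because $\alpha>0$ and $-\Delta_h$ is positive semi-definite (with homogeneous Neumann boundary condition), $B$ is symmetric positive definite, and so is $\calL_1$; consequently $B^{1/2}$ is well defined and commutes with $\calL_1^{-1}$. Setting $w:=B^{1/2}\eta$, I would compute
\[
\ciptwo{\calG_h \eta}{\eta} = \frac{\chi^2 \dt}{8\theta}\,\ciptwo{\calL_1^{-1} B^{1/2} w}{\eta} = \frac{\chi^2 \dt}{8\theta}\,\ciptwo{\calL_1^{-1} w}{B^{1/2}\eta} = \frac{\chi^2 \dt}{8\theta}\,\ciptwo{\calL_1^{-1} w}{w}\ge 0,
\]
the last inequality being a consequence of $\calL_1^{-1}$ being symmetric positive definite. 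This yields the monotonicity property \reff{lem:mono-0}.

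For the equality statement, the identity above shows $\ciptwo{\calG_h \eta}{\eta}=0$ iff $\calL_1^{-1/2} w=0$, i.e.\ iff $B\eta=0$. Since $\alpha>0$ forces $B$ to be strictly positive definite on all of $\calC$, I conclude $\eta=0$, which translates to $\eta_1=\eta_2$; the mean-zero hypothesis is not even needed here but is included for consistency with the usage later in the paper (and would also give the conclusion in the degenerate case $\alpha=0$, since then $\ker B$ consists of discrete constants). Finally, the invertibility of $\calG_h$ follows immediately: $\calG_h$ is the product of two commuting symmetric positive definite operators $\frac{\chi^2\dt}{8\theta}\calL_1^{-1}$ and $B$, hence symmetric positive definite, and in particular invertible on $\calC$.

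The only slightly delicate point is the spectral-calculus step that lets me insert $B^{1/2}$; I would justify it by noting that $B$ and $\calL_1^{-1}$ are simultaneously diagonalizable in the discrete cosine basis associated with the homogeneous Neumann boundary condition, which converts the whole computation into an elementwise inequality $\tfrac{\dt}{\theta} \ge \tfrac{1}{\theta/\dt + (\alpha+\mu\lambda)/2}$ between positive eigenvalues, where $\lambda\ge 0$ runs over the eigenvalues of $-\Delta_h$. This is obviously true with equality iff $\alpha+\mu\lambda=0$, providing a clean alternative derivation.
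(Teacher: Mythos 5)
Your proposal is correct, and at its core it rests on the same fact as the paper's proof: after reducing by linearity to $\ciptwo{\calG_h\td\eta}{\td\eta}\geq 0$ with $\td\eta=\eta_1-\eta_2$, everything comes down to the operator inequality $\calL_1^{-1}\preceq \frac{\dt}{\theta}I$. The difference is in how that inequality is handled. The paper splits $\ciptwo{\calG_h\td\eta}{\td\eta}$ into $\frac{\chi^2\dt}{4\theta}\|\td\eta\|_2^2-\frac{\chi^2}{4}\ciptwo{\calL_1^{-1}\td\eta}{\td\eta}$ and simply asserts $\frac{\chi^2}{4}\ciptwo{\td\eta}{\calL_1^{-1}\td\eta}\leq\frac{\chi^2\dt}{4\theta}\|\td\eta\|_2^2$ ``based on the definition of $\calL_1^{-1}$,'' leaving the justification implicit. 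You instead factor $\calG_h=\frac{\chi^2\dt}{8\theta}\calL_1^{-1}B$ with $B=\alpha I-\mu\Delta_h$ (the algebra checks out) and conclude positivity from the product of commuting symmetric positive definite operators, with the discrete-cosine diagonalization as a backup; this makes the omitted step explicit, gives a cleaner route to invertibility (product of commuting SPD operators), and correctly isolates the role of the hypothesis $\overline{\eta_1}=\overline{\eta_2}=0$, which, as you note, is only needed for the equality characterization in the degenerate case $\alpha=0$ where $\ker B$ consists of discrete constants. In short: same skeleton, but your version is more self-contained and slightly sharper on the equality case than the paper's.
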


\begin{proof}
Denote a difference function $\td \eta=\eta_1-\eta_2$. Since $\calG_h$ is a linear operator, we have
\begin{equation}\label{lemo:eq1}
\calG_h(\eta_1)-\calG_h(\eta_2)=\calG_h(\td\eta)= \frac{\chi^2 \dt}{4\theta} \td\eta-\frac{\chi^2}{4}\calL_1^{-1}\td\eta.
\end{equation}
Taking a discrete inner product with \reff{lemo:eq1} by $\td \eta$ yields
\begin{equation}\label{lemo:eq2}
\ciptwo{\calG_h(\td \eta)}{\td \eta}= \frac{\chi^2 \dt}{4\theta} \|\td\eta \|^2_2-\frac{\chi^2}{4}\ciptwo{\calL^{-1}_1\td \eta}{\td \eta}.
\end{equation}
Based on the definition of $\calL_1^{-1}$, we see that 
\begin{equation}\label{lemo:eq4}
\frac{\chi^2}{4}\ciptwo{\td \eta}{\calL_1^{-1}\td\eta}\leq\frac{\chi^2\dt}{4\theta}\|\td\eta \|^2_2.
\end{equation}
Consequently, a combination of \reff{lemo:eq2} and \reff{lemo:eq4} leads to
\begin{equation}
\ciptwo{\calG_h(\td \eta)}{\td \eta}\geq 0 .
\end{equation}
In addition, the equality is valid if and only if $\td\eta=0$, i.e., $\eta_1=\eta_2$. The proof is complete.
\qed
\end{proof}

Moreover, a discrete maximum norm bound of the operator $\calL^{-1}$ is also needed in the later analysis. Using the technique presented in the Refs.~\cite{ChenWangWangWise_JCP2019, DingWangZhou_NMTMA19}, we state the following lemma without giving its proof.

\begin{lemma}\label{LemLf}
Assume that $\nu\in \mathring{\calC}$, $\|\nu \|_\infty\leq C_2$, and $f\in\calC$ satisfies $f\geq f_0>0$ (at a point-wise level). The following estimate is available:
\[
\|\calL^{-1}_f \nu \|_\infty\leq C_3 f_0^{-1}h^{-\frac{1}{2}},
\]
where $C_3>0$ only depends on $\Omega$ and $C_2$.
\end{lemma}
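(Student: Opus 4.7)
The plan is to run the standard energy-estimate chain, then finish by combining a discrete Sobolev embedding in three dimensions with an inverse inequality that converts an $H^1_h$ bound into an $L^\infty$ bound at the cost of a factor $h^{-1/2}$.

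First I would set $u:=\calL^{-1}_f \nu \in \mathring{\calC}$, so that $u$ solves $-\nabla_h\cdot(f\nabla_h u)=\nu$ under the discrete homogeneous Neumann boundary condition, with $\overline{u}=0$. Taking the discrete $L^2$ inner product with $u$ and applying the summation-by-parts identity from the earlier lemma gives
\begin{equation*}
\ciptwo{\nu}{u}=\eipvec{\nabla_h u}{\calA_h(f)\nabla_h u}\ge f_0\,\|\nabla_h u\|_2^2,
\end{equation*}
since $\calA_h(f)\ge f_0$ pointwise (the hypothesis $f\ge f_0$ is preserved by the face-centered averaging). By Cauchy--Schwarz and the discrete Poincar\'e inequality (valid because $\overline{u}=0$), one gets $f_0\|\nabla_h u\|_2^2\le \|\nu\|_2\|u\|_2 \le C_P\|\nu\|_2\|\nabla_h u\|_2$, whence
\begin{equation*}
\|\nabla_h u\|_2\le \frac{C_P}{f_0}\|\nu\|_2 \le \frac{C_P C_2 |\Omega|^{1/2}}{f_0},\qquad \|u\|_{H^1_h}\le \frac{C_4}{f_0}
\end{equation*}
with $C_4$ depending only on $\Omega$ and $C_2$.

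The main work is then converting this $H^1_h$ bound into an $L^\infty$ bound. In three space dimensions, a discrete Sobolev embedding of the form $\|u\|_6\le C\|u\|_{H^1_h}$ combined with the inverse inequality $\|u\|_\infty \le C h^{-1/2}\|u\|_6$ (both of which are the standard estimates used in the cited works of Chen--Wang--Wang--Wise and Ding--Wang--Zhou) yields
\begin{equation*}
\|u\|_\infty \le C h^{-1/2}\|u\|_{H^1_h}\le C_3 f_0^{-1}h^{-1/2},
\end{equation*}
which is exactly the claimed bound. I would quote these two discrete inequalities directly from the references rather than reprove them.

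The main obstacle is really just the last step: one needs the discrete counterparts of the 3D Sobolev embedding $H^1\hookrightarrow L^6$ and the inverse inequality $\|u\|_\infty \lesssim h^{-1/2}\|u\|_6$ in the finite-difference setting with homogeneous Neumann boundary data. These are nontrivial but well documented; everything else is a routine energy argument using only that $\calA_h(f)\ge f_0$, Cauchy--Schwarz, and Poincar\'e for mean-zero grid functions, and does not require any additional regularity on the coefficient $f$.
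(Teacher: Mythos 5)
Your proof is correct and is essentially the argument the paper has in mind: the paper states this lemma \emph{without} proof, deferring to the cited references, and the standard technique there is exactly your chain --- the energy identity $\ciptwo{\nu}{u}=\eipvec{\nabla_h u}{\calA_h(f)\nabla_h u}\ge f_0\|\nabla_h u\|_2^2$ together with the discrete Poincar\'e inequality for mean-zero grid functions, followed by the 3D discrete Sobolev embedding into $L^6$ and the inverse inequality $\|u\|_\infty\le Ch^{-1/2}\|u\|_6$ to produce the $h^{-1/2}$ factor. The constant tracking is also consistent with the statement ($C_3$ depending only on $\Omega$ and $C_2$, with the coefficient $f$ entering only through $f_0^{-1}$), so no gap remains beyond the two quoted discrete inequalities, which are indeed standard in the finite-difference setting of the cited works.
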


The positivity-preserving and unique solvability properties are proved in the following theorem. For simplicity of presentation, the classical PKS system, with $f (\rho) = \rho ( \ln \rho -1)$ (as given by \eqref{f_1}), is considered in the theoretical analysis. In turn, we get $f' (\rho) = \ln \rho$, $f'' (\rho) = \frac{1}{\rho}$ and $f''' (\rho) =- \frac{1}{\rho^2}$. An extension to the PKS system with a bounded mobility~\eqref{f_2} and a saturation density~\eqref{f_3} would be straightforward.
\begin{theorem}\label{t:EUP}
{\bf (Existence, uniqueness, and positivity-preserving property)}
 Define $C_{\rm min}^{n}:=\underset{1\leq i\leq N}{\min}\rho^{n}_{i}$, $C_{\rm max}^{n}:=\underset{1\leq i\leq N}{\max}\rho^{n}_{i}$ and $\| \phi^n\|_\infty\leq M^{n}_{\rm max}$. Given $C^{n}_{\rm min}>0$, there exists a unique solution to the second-order accurate scheme (\ref{PKS2nd_1}-\ref{PKS2nd_2}), such that
\begin{equation}\label{+rho}
\rho^{n+1}_{i,j,k}>0 , ~~\mbox{for}~~ i,j,k=1,2,\cdots,N.
\end{equation}
\end{theorem}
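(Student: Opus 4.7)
The plan is to recast the implicit part of \reff{Ma_PKS} as the Euler--Lagrange equation of a strictly convex, singular functional on the mass-constrained positive cone, so that both unique solvability and positivity reduce to classical convex-analysis arguments. First I would set $D:=1/f''(\hat\rho^{n+\frac12})=\hat\rho^{n+\frac12}$, which is pointwise bounded below by $\dt^4$ thanks to the regularization in \reff{MidAp} and is therefore a valid coefficient for the SPD operator $\calL_D$ on $\mathring\calC$. Collecting the known right-hand side data into
\[
\Phi^n:=\frac{\chi}{2}\calL_1^{-1}\calL_2\phi^n+\frac{\chi^2}{4}\calL_1^{-1}\rho^n+\frac{\chi}{2}\phi^n+\frac{\chi^2\dt}{4\theta}\rho^n,
\]
and fixing an antiderivative $\mathcal F:(0,\infty)\to\R$ of $S^{n+\frac12}(\rho)=\ln\rho-\tfrac{\rho-\rho^n}{2\rho}-\tfrac{(\rho-\rho^n)^2}{6\rho^2}$, I would introduce
\[
J_h(\rho):=\frac{1}{2\dt}\nrm{\rho-\rho^n}_{\calL_D^{-1}}^2+\gamma\ciptwo{\mathcal F(\rho)}{1}+\frac12\ciptwo{\rho}{\calG_h\rho}-\ciptwo{\rho}{\Phi^n}
\]
on the admissible set $A_h^\ast:=\{\rho\in\calC:\rho_{i,j,k}>0,\ \overline{\rho}=\overline{\rho^n}\}$. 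A direct variation with a Lagrange multiplier enforcing the mass constraint shows that the critical-point equation of $J_h$ is exactly the first line of \reff{Ma_PKS} after applying $-\nabla_h\cdot(D\nabla_h\,\cdot)$; the chemical-signal equation then decouples as the linear, invertible problem $\calL_1\phi^{n+1}=\calL_2\phi^n+\tfrac{\chi}{2}(\rho^{n+1}+\rho^n)$.

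The crucial property is strict convexity of $J_h$ on $A_h^\ast$. The quadratic $\nrm{\rho-\rho^n}_{\calL_D^{-1}}^2$ is convex since $\calL_D^{-1}$ is SPD on $\mathring\calC$, and $\langle\rho,\calG_h\rho\rangle$ is non-negative by Lemma \ref{lem:mono}. The nontrivial point is that the coefficients of the truncation chosen in \reff{Nolite} are calibrated precisely so that the entropy piece $\mathcal F$ is itself strictly convex: a direct differentiation gives
\[
\mathcal F''(\rho)=\frac{1}{\rho}-\frac{5\rho^n}{6\rho^2}+\frac{(\rho^n)^2}{3\rho^3}=\frac{1}{\rho^3}\Bigl[\bigl(\rho-\tfrac{5}{12}\rho^n\bigr)^2+\tfrac{23}{144}(\rho^n)^2\Bigr]>0\quad\text{for }\rho>0,
\]
the positivity witnessed by the negative discriminant of the bracketed quadratic in $\rho$. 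Hence $J_h$ is strictly convex on $A_h^\ast$.

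For existence and positivity I would minimize $J_h$ on the compact convex subset $A_h^{\delta,K}:=\{\rho\in A_h^\ast:\delta\le\rho_{i,j,k}\le K\}$. The superlinear growth of $\mathcal F$ lets $K$ be fixed large enough (depending only on $|\Omega|$, $h$, $\overline{\rho^n}$) that the minimum cannot be attained on $\{\rho_{i,j,k}=K\}$; continuity and strict convexity then deliver a unique minimizer $\rho^{\delta,K}\in A_h^{\delta,K}$. The main obstacle, and the heart of the positivity proof, is ruling out the lower boundary $\{\rho_{i,j,k}=\delta\}$ as $\delta\to 0^+$. If $\rho^{\delta,K}_{i_0,j_0,k_0}=\delta$, mass conservation supplies some $\rho^{\delta,K}_{i_1,j_1,k_1}\ge\overline{\rho^n}$; testing $J_h$ with a mass-preserving perturbation that raises $\rho_{i_0,j_0,k_0}$ and lowers $\rho_{i_1,j_1,k_1}$ yields a directional derivative of the form $\gamma\bigl(\mathcal F'(\delta)-\mathcal F'(\rho^{\delta,K}_{i_1,j_1,k_1})\bigr)+R_\delta$. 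Direct inspection of $S^{n+\frac12}$ shows $\mathcal F'(\rho)\to-\infty$ as $\rho\to 0^+$, while $\mathcal F'(\rho^{\delta,K}_{i_1,j_1,k_1})$ stays bounded on $[\overline{\rho^n},K]$; the remainder $R_\delta$, gathering contributions from $\calL_D^{-1}$, $\calG_h$, and $\Phi^n$, is uniformly bounded in $\delta$ by virtue of the discrete $L^\infty$ estimate for $\calL^{-1}$ supplied by Lemma \ref{LemLf}. Hence this derivative is strictly negative for all sufficiently small $\delta$, contradicting boundary minimality; the minimizer $\rho^{n+1}:=\rho^{\delta,K}$ therefore lies strictly in the interior, is a positive solution of \reff{Ma_PKS}, and is the unique such solution by the strict convexity of $J_h$.
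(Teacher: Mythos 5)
Your proposal is correct and follows essentially the same route as the paper: you recast the scheme as the Euler--Lagrange equation of the same strictly convex functional (your $J_h$ coincides term-by-term with the paper's $\cJ^n$ in \eqref{EnFun}) over the mass-constrained positive cone, minimize on a compact truncation, and rule out the lower boundary by a mass-preserving directional-derivative argument in which the $-\gamma(\rho^n)^2/(6\delta^2)$ singularity drives the derivative to $-\infty$ while Lemma~\ref{LemLf} keeps the $\calL^{-1}$, $\calG_h$, and data terms bounded, exactly as in \eqref{thp:eq2}--\eqref{thp:eq7}. Your explicit computation $\mathcal{F}''(\rho)=\rho^{-3}\bigl[(\rho-\tfrac{5}{12}\rho^n)^2+\tfrac{23}{144}(\rho^n)^2\bigr]>0$ is a welcome addition that makes precise the strict convexity the paper asserts without verification in this proof (it appears only implicitly in Appendix~\ref{Ap:A}).
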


\begin{proof}
The numerical solution to the proposed algorithm (\ref{PKS2nd_1}-\ref{PKS2nd_2}) is equivalent to the minimizer of the discrete energy functional:
\begin{equation}\label{EnFun}
\begin{aligned}
\cJ^n(\rho)=&\frac{1}{2\Delta t} \|\rho-\rho^{n} \|^2_{\calL^{-1}_{\hat{\rho}^{n+\frac{1}{2}}}}+\gamma\ciptwo{\rho+\frac{5}{6}\rho^{n} }{\ln \rho -1} +\gamma\ciptwo{(\rho^{n})^2}{\frac{1}{6\rho}}-\frac{2}{3}\gamma\ciptwo{\rho}{1}\\
&-\frac{\chi^2}{8}\ciptwo{\rho}{\calL^{-1}_1\rho}
 + \frac{\chi^2 \dt}{8 \theta} \|\rho \|^2_2\\
&-\ciptwo{\frac{\chi}{2}\calL_1^{-1}\calL_2\phi^{n}+\frac{\chi^2}{4}\calL_1^{-1}\rho^n+\frac{\chi}{2}\phi^n+  \frac{\chi^2 \dt}{4\theta} \rho^n}{\rho} ,
\end{aligned}
\end{equation}
over the admissible set
\[
\K_{h}:=\left\{\rho\bigg| 0 <\rho_{i,j,k}< \xi,~ \frac{1}{|\Omega|}\ciptwo{\rho}{1}=Q^0 ,~~~i,j,k=1,\cdots,N \right\}, \, \, \,  Q^0 =\frac{1}{|\Omega|}\ciptwo{\rho^0}{1} ,  \, \, \,
 \xi:=\frac{Q^0 |\Omega|}{h^3} .
\]
Consider a closed subset $\K_{h,\delta}\subset\K_h$:
\[
\K_{h,\delta}:=\left\{\rho\bigg| \delta \leq \rho_{i,j,k}\leq \xi-\delta,~ \frac{1}{|\Omega|}\ciptwo{\rho}{1}=Q^0 ,~~~i,j,k=1,\cdots,N \right\}, \quad  \delta\in(0,\frac{\xi}{2}) .
\]
Obviously, $\K_{h,\delta}$ is a bounded, convex, and compact subset of $\K_h$. By the convexity of $\cJ^n$, there exists a unique minimizer of $\cJ^n$ in $\K_{h,\delta}$.

 Suppose that the minimizer of $\cJ^n$, $\rho^*$, touches the boundary of $\K_{h,\delta}$. Assume that there exists a grid point $\vec{\alpha}_0=(i_0,j_0,k_0)$ such that $\rho^{*}_{\vec{\alpha}_0}=\delta$, and a grid point $\vec{\alpha}_1=(i_1,j_1,k_1)$ such that the maximum of $\rho^{*}$ is achieved. It is clear that the maximum value $\rho^{*}_{\vec{\alpha}_1}$ is larger than the mean value $Q^0$, and the minimum value $\rho^{*}_{\vec{\alpha}_0}$ is less than $Q^0$, i.e.,
\[
\rho^{*}_{\vec{\alpha}_1}\geq Q^0 , ~~
\rho^{*}_{\vec{\alpha}_0}\leq Q^0 .
\]

Consider the following directional derivative 
\[
\begin{aligned}
\lim_{t\rightarrow 0^+}&\frac{\cJ^n(\rho^*+td)-\cJ^n(\rho^*)}{t}\\
=&\frac{1}{\Delta t}\ciptwo{\calL^{-1}_{\hat{\rho}^{n+\frac{1}{2}}} (\rho^{*}-\rho^{n})}{d} +\gamma\ciptwo{\ln \rho^{*}+\frac{5\rho^{n}}{6\rho^{*}} -\frac{(\rho^{n})^2}{6(\rho^{*})^2}-\frac{2}{3}}{d}\\
&+\ciptwo{\calG_h\rho^*}{d}-\ciptwo{\frac{\chi}{2}\calL_1^{-1}\calL_2\phi^{n}+\frac{\chi^2}{4}\calL_1^{-1}\rho^n+\frac{\chi}{2}\phi^n+ \frac{\chi^2 \dt}{4\theta} \rho^n}{d},
\end{aligned}
\]
with the direction 
\[
d=\delta_{i,i_0}\delta_{j,j_0}\delta_{k,k_0}-\delta_{i,i_1}\delta_{j,j_1}\delta_{k,k_1},
\]
where $\delta_{l,k}$ is the Kronecker symbol. Clearly, $d\in\mathring{\calC}$. In turn, the directional derivative becomes
\begin{equation}\label{thp:eq1}
\begin{aligned} 
 & 
\frac{1}{h^3}\lim_{t\rightarrow 0^+} \frac{\cJ^n(\rho^*+td)-\cJ^n(\rho^*)}{t}\\
=&\frac{1}{\Delta t} \calL^{-1}_{\hat{\rho}^{n+\frac{1}{2}}}(\rho^{*}-\rho^{n})_{\vec{\alpha}_0}-\frac{1}{\Delta t}\calL^{-1}_{\hat{\rho}^{n+\frac{1}{2}}}( \rho^{*}-\rho^{n})_{\vec{\alpha}_1}+(\calG_h\rho^*)_{\vec{\alpha}_0}-(\calG_h\rho^*)_{\vec{\alpha}_1}\\
&+\gamma\Big(\ln \rho^{*}+\frac{5\rho^{n}}{6\rho^{*}} -\frac{(\rho^{n})^2}{6(\rho^{*})^2}\Big)_{\vec{\alpha}_0}-\gamma\Big(\ln \rho^{*} +\frac{5\rho^{n}}{6\rho^{*}} -\frac{(\rho^{n})^2}{6(\rho^{*})^2}\Big)_{\vec{\alpha}_1}\\
&-\Big(\frac{\chi}{2}\calL_1^{-1}\calL_2\phi^{n}+\frac{\chi^2}{4}\calL_1^{-1}\rho^n+\frac{\chi}{2}\phi^n+  \frac{\chi^2 \dt}{4\theta} \rho^n \Big)_{\vec{\alpha}_0} \\
&
+\Big( \frac{\chi}{2}\calL_1^{-1}\calL_2\phi^{n}+\frac{\chi^2}{4}\calL_1^{-1}\rho^n+\frac{\chi}{2}\phi^n+  \frac{\chi^2 \dt}{4\theta}  \rho^n \Big)_{\vec{\alpha}_1}.
\end{aligned}
\end{equation}
Define $C^{n}_{\rm max}=\underset{i,j,k=1,\cdots,N}{\max} \rho^{n}_{i,j,k}$ and $C^{n}_{\rm min}=\underset{i,j,k=1,\cdots,N}{\min} \rho^{n}_{i,j,k}$.
Since $\rho^{*}_{\vec{\alpha}_0}=\delta$ and $\rho^{*}_{\vec{\alpha}_1}\geq Q$, we have
\begin{equation}\label{thp:eq2}
\begin{aligned}
\gamma\left[\ln (\rho^{*})+\frac{5\rho^{n}}{6\rho^{*}} -\frac{(\rho^{n})^2}{6(\rho^{*})^2} \right]_{\vec{\alpha}_0}&-\gamma\left[\ln (\rho^{*})+\frac{5\rho^{n}}{6\rho^{*}} -\frac{(\rho^{n})^2}{6(\rho^{*})^2} \right]_{\vec{\alpha}_1}\\
\leq& \gamma\ln \delta +\frac{5\gamma C^{n}_{\rm max}}{6\delta}-\frac{\gamma(C^{n}_{\rm min})^2}{6\delta^2}-\gamma\ln Q^0 +\frac{\gamma(C^{n}_{\rm max})^2}{6( Q^0 )^2}.
\end{aligned}
\end{equation}
With a similar analysis technique as in Lemma~\ref{LemLf}, one can derive that $\|\calG_h\rho^*\|_\infty \leq M_1/2$ with $\|\rho^{*}\|_{\infty} \leq \xi$, where $M_1$ is a constant independent of $\delta$. Then we have 
\begin{equation}\label{thp:eq3}
(\calG_h\rho^*)_{\vec{\alpha}_0}-(\calG_h\rho^*)_{\vec{\alpha}_1} \leq M_1.
\end{equation}
On the other hand, the operator $\frac{\chi}{2}\calL_1^{-1}\calL_2\phi^{n}+\frac{\chi^2}{4}\calL_1^{-1}\rho^n+ \frac{\chi}{2}\phi^n+ \frac{\chi^2 \dt}{4\theta} \rho^n$ is linear with respect to $\phi^n$ and $\rho^n$. Subsequently, the a-priori assumptions $\| \rho^n\|_\infty\leq C^{n}_{\rm max}$ and $\| \phi^n\|_\infty\leq M^{n}_{\rm max}$ indicate that
\begin{equation}\label{thp:eq4}
\begin{aligned}
&\left(\frac{\chi}{2}\calL_1^{-1}\calL_2\phi^{n}+\frac{\chi^2}{4}\calL_1^{-1}\rho^n+\frac{\chi}{2}\phi^n+  \frac{\chi^2}{4\theta} \rho^n\right)_{\vec{\alpha}_1}\\
&\qquad-\left(\frac{\chi}{2}\calL_1^{-1}\calL_2\phi^{n}+\frac{\chi^2}{4}\calL_1^{-1}\rho^n+\frac{\chi}{2}\phi^n+ \frac{\chi^2}{4\theta} \rho^n\right)_{\vec{\alpha}_0}\leq M_2,
\end{aligned}
\end{equation}
where $M_2$ is another constant independent of $\delta$. By the bound $\|\rho^{*}-\rho^{n} \|_{\infty}\leq \xi+ C^{n}_{\rm max} $, we get
\begin{equation}\label{thp:eq5}
\calL^{-1}_{\hat{\rho}^{n+\frac{1}{2}}}(\rho^{*}-\rho^{n})_{\vec{\alpha}_0}-\calL^{-1}_{\hat{\rho}^{n+\frac{1}{2}}}( \rho^{*}-\rho^{n})_{\vec{\alpha}_1}\leq 2 M_3,
\end{equation}
where $M_3$ is a constant dependent on $C^{n}_{\rm max} $, $\dt$, $h$, $\Omega$, $\xi$, and Lemma \ref{LemLf} has been applied.

Substituting \reff{thp:eq2},  \reff{thp:eq3}, \reff{thp:eq4}, and \reff{thp:eq5} into \reff{thp:eq1}, we obtain
\begin{equation}\label{thp:eq7}
\begin{aligned}
\frac{1}{h^3}\lim_{t\rightarrow 0^+}\frac{\cJ^n(\rho^*+td)-\cJ^n(\rho^*)}{t}\leq& 2(\Delta t)^{-1}M_3+\gamma\ln \delta +\frac{5\gamma C^{n}_{\rm max}}{6\delta}+M_1+M_2\\
&-\frac{\gamma(C^{n}_{\rm min})^2}{6\delta^2}-\gamma\ln Q^0 +\frac{\gamma(C^{n}_{\rm max})^2}{6 (Q^0 )^2}.
\end{aligned}
\end{equation}
For any fixed $\Delta t$ and $h$, the value of $\delta$ could be chosen sufficiently small so that
\begin{equation}\label{thp:eq11}
\begin{aligned}
2(\Delta t)^{-1}M_3+&\gamma\ln \delta +\frac{5\gamma C^{n}_{\rm max}}{6\delta}
-\frac{\gamma(C^{n}_{\rm min})^2}{6\delta^2}-\gamma\ln Q^0 +\frac{\gamma(C^{n}_{\rm max})^2}{6 (Q^0 ) ^2}+M_1+M_2<0.
\end{aligned}
\end{equation}
Therefore, the following inequality is valid:
\begin{equation}\label{thp:eq12}
\lim_{t\rightarrow 0^+}\frac{\cJ^n(\rho^*+td)-\cJ^n(\rho^*)}{t}<0.
\end{equation}
This is contradictory to the assumption that $\rho^{*}$ is the minimizer of $\cJ^n$.

Similarly, we are able to prove that the minimizer of $\cJ^n$ cannot occur at the upper boundary of $\K_{h,\delta}$. In fact, if this occurs, there must be a grid point, at which the value of $\rho^{*}$ approaches zero. A contradiction could be obtained in the same manner as above. Therefore, the global minimum of $\cJ^n$ could only possibly achieve at an interior point, i.e., $\rho^*\in \mathring{\K}_{h,\delta}\subset \mathring{\K}_{h}$ as $\delta\rightarrow 0$. Since $\cJ^n$ is a smooth functional, there must exist a solution $\rho^*\in \mathring{\K}_{h,\delta}\subset \mathring{\K}_{h}$, satisfying
\begin{equation}\label{thp:eq13}
\lim_{t\rightarrow 0^+}\frac{\cJ^n(\rho^*+td)-\cJ^n(\rho^*)}{t}=0 .
\end{equation}
As a result, there exists a positive numerical solution $\rho^*$ to the numerical system (\ref{PKS2nd_1}-\ref{PKS2nd_2}). The uniqueness of the numerical solution is a direct consequence of the strict convexity of the discrete energy functional $\cJ^n(\rho)$.
\qed
\end{proof}

\begin{theorem} \label{thm: energy stability}
{\bf (The original energy dissipation)}
The second-order numerical scheme (\ref{PKS2nd_1}-\ref{PKS2nd_2}) respects a dissipation law of the discrete free energy \reff{EnergyKS1}:
\begin{equation}\label{KSED:eq0}
\begin{aligned}
F^{n+1}_h-F^n_h\leq&-\Delta t[\frac{1}{f''(\hat{\rho}^{n+\frac{1}{2}})}\nabla_h v^{n+\frac{1}{2}},\nabla_h v^{n+\frac{1}{2}}]-\frac{\theta}{\Delta t}\|\phi^{n+1}-\phi^n\|^2_2 
-  \frac{\chi^2 \dt}{4\theta} \|\rho^{n+1}-\rho^n \|^2_2\leq 0,
\end{aligned}
\end{equation}
with $v^{n+\frac{1}{2}}=\gamma S^{n+\frac{1}{2}}-\frac{\chi}{2}(\phi^{n+1}+\phi^n)+\frac{\chi^2\dt}{4\theta} (\rho^{n+1}-\rho^n) $.
\end{theorem}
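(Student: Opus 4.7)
The strategy is to form $F_h^{n+1}-F_h^n$ explicitly and bound each contribution using the two discrete equations with carefully chosen test functions. The plan is as follows. First, I would test the chemical signal equation \eqref{PKS2nd_2} with $\phi^{n+1}-\phi^n$; after summation by parts on the discrete Laplacian (using the homogeneous Neumann boundary condition) and telescoping on the $\alpha$- and $\mu$-terms, this gives the identity
\[
\tfrac{\mu}{2}\bigl(\nrm{\nabla_h \phi^{n+1}}_2^2-\nrm{\nabla_h \phi^n}_2^2\bigr)+\tfrac{\alpha}{2}\bigl(\nrm{\phi^{n+1}}_2^2-\nrm{\phi^n}_2^2\bigr)=\tfrac{\chi}{2}\ciptwo{\rho^{n+1}+\rho^n}{\phi^{n+1}-\phi^n}-\tfrac{\theta}{\Delta t}\nrm{\phi^{n+1}-\phi^n}_2^2,
\]
which already accounts for the $\phi$-dependent pieces of $F_h$ and produces the $-\tfrac{\theta}{\Delta t}\nrm{\phi^{n+1}-\phi^n}_2^2$ dissipation. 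Next, I would split the coupling term via the exact pointwise decomposition $\chi(\rho^{n+1}\phi^{n+1}-\rho^n\phi^n)=\tfrac{\chi}{2}(\rho^{n+1}-\rho^n)(\phi^{n+1}+\phi^n)+\tfrac{\chi}{2}(\rho^{n+1}+\rho^n)(\phi^{n+1}-\phi^n)$, so that the second piece cancels exactly against the right-hand side of the identity above.

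For the entropy piece, the central technical ingredient is a pointwise Taylor expansion of $f$ about $\rho^{n+1}$. Matching $S^{n+1/2}$ in \eqref{Nolite} to the Taylor polynomial yields the identity
\[
(\rho^{n+1}-\rho^n)\,S^{n+1/2}-\bigl(f(\rho^{n+1})-f(\rho^n)\bigr)=\tfrac{1}{24}f^{(4)}(\xi)(\rho^{n+1}-\rho^n)^4,
\]
for some intermediate value $\xi$ between $\rho^n$ and $\rho^{n+1}$. For the classical PKS entropy \eqref{f_1} one computes $f^{(4)}(\rho)=2/\rho^3>0$, and the analogous positivity of $f^{(4)}$ holds for \eqref{f_2} and \eqref{f_3}. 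Combined with the positivity guaranteed by Theorem~\ref{t:EUP}, which ensures $\xi>0$ at every grid point, summation yields the sign-definite inequality $\gamma\ciptwo{f(\rho^{n+1})-f(\rho^n)}{1}\leq \gamma\ciptwo{\rho^{n+1}-\rho^n}{S^{n+1/2}}$. This is the crucial point where the particular $S^{n+1/2}$, including its cubic correction $\tfrac{1}{6}f'''(\rho^{n+1})(\rho^{n+1}-\rho^n)^2$, earns its keep: the cubic in the Taylor expansion is absorbed exactly, leaving a signed fourth-order remainder.

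Finally, I would test the density equation \eqref{PKS2nd_1} with $v^{n+1/2}$; using the summation-by-parts formula from the lemma in Section~3.1,
\[
\tfrac{1}{\Delta t}\ciptwo{\rho^{n+1}-\rho^n}{v^{n+1/2}}=-\bigl[\tfrac{1}{f''(\hat\rho^{n+1/2})}\nabla_h v^{n+1/2},\nabla_h v^{n+1/2}\bigr],
\]
and then expanding $v^{n+1/2}$ separates the left-hand side into $\gamma\ciptwo{\rho^{n+1}-\rho^n}{S^{n+1/2}}$, $-\tfrac{\chi}{2}\ciptwo{\rho^{n+1}-\rho^n}{\phi^{n+1}+\phi^n}$, and $\tfrac{\chi^2\Delta t}{4\theta}\nrm{\rho^{n+1}-\rho^n}_2^2$. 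Assembling the three parts, the first of these matches the entropy term bounded in the previous step, the second cancels against the leftover coupling piece from the decomposition, and the third is moved to the right-hand side with a minus sign to produce the $-\tfrac{\chi^2\Delta t}{4\theta}\nrm{\rho^{n+1}-\rho^n}_2^2$ dissipation. Non-positivity of each term on the right follows since $f''(\hat\rho^{n+1/2})>0$ by construction of \eqref{MidAp}, and the remaining two terms are manifest squared norms. The main obstacle is the coordination between three ingredients: the Taylor-based form of $S^{n+1/2}$, the stabilization $\tfrac{\chi^2\Delta t}{4\theta}(\rho^{n+1}-\rho^n)$ hidden inside $v^{n+1/2}$, and the Crank--Nicolson-style averaging of $\phi$ in both equations; remove or perturb any one and either the fourth-order remainder loses its sign, or the coupling cancellation fails, preventing dissipation of the \emph{original} (rather than a modified) energy.
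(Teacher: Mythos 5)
Your proposal is correct and follows essentially the same route as the paper: testing \eqref{PKS2nd_1} with $v^{n+\frac12}$, testing \eqref{PKS2nd_2} with $\pm(\phi^{n+1}-\phi^n)$, cancelling the coupling terms via the identity $\frac{\chi}{2}(\rho^{n+1}-\rho^n)(\phi^{n+1}+\phi^n)+\frac{\chi}{2}(\rho^{n+1}+\rho^n)(\phi^{n+1}-\phi^n)=\chi(\rho^{n+1}\phi^{n+1}-\rho^n\phi^n)$, and controlling the entropy increment through the fourth-order Taylor remainder $\frac{1}{24}f^{(4)}(\xi)(\rho^{n+1}-\rho^n)^4\ge 0$, which is exactly the paper's inequality $f(\rho^{n+1})-f(\rho^n)\le S^{n+\frac12}(\rho^{n+1}-\rho^n)$. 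Your explicit verification that $f^{(4)}>0$ on the admissible interval for all three entropy choices, and the remark that Theorem~\ref{t:EUP} supplies the positivity needed for the intermediate point $\xi$, are consistent with (and slightly more detailed than) the paper's argument.
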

\begin{proof}
Taking a discrete inner product with \reff{PKS2nd_1} by $\Delta t v^{n+\frac{1}{2}}$,
we get
\begin{equation}\label{KSED:eq1}
\ciptwo{\rho^{n+1}-\rho^n}{v^{n+\frac{1}{2}}}=-\Delta t[\frac{1}{f''(\hat{\rho}^{n+\frac{1}{2}})}\nabla_h v^{n+\frac{1}{2}},\nabla_h v^{n+\frac{1}{2}}].
\end{equation}
For any function $H(\cdot)\in C^4(\R)$, the following Taylor expansion is valid:
\[
\begin{aligned}
H(x)=&H(y)+H^{(1)}(y)(x-y)+\frac{1}{2}H^{(2)}(y)(x-y)^2\\
&+\frac{1}{6}H^{(3)}(y)(x-y)^3+\frac{1}{24}H^{(4)}(\eta)(x-y)^4,~ \forall x, y \in \R,
\end{aligned}
\]
where $\eta$ is between $x$ and $y$, and $H^{(p)}(y)=\frac{\partial^{p}H}{\partial y^p}$ for $p=1,2,3,4$. If $H^{(4)}(\eta)>0$, one has
\[
H(y)-H(x)\leq \left(H^{(1)}(y)-\frac{1}{2}H^{(2)}(y)(y-x)+\frac{1}{6}H^{(3)}(y)(y-x)^2\right)(y-x).
\]
Choosing $H(\rho)=f(\rho)$, we have
\begin{equation}\label{KSED:eq2}
\begin{aligned}
\ciptwo{\rho^{n+1}-\rho^n}{v^{n+\frac{1}{2}}}\geq& \gamma\ciptwo{f(\rho^{n+1})-f(\rho^n)}{1}-\frac{\chi}{2}\ciptwo{\rho^{n+1}-\rho^n}{\phi^{n+1}+\phi^n}\\
&+  \frac{\chi^2 \dt}{4\theta} \|\rho^{n+1}-\rho^n \|^2_2.
\end{aligned}
\end{equation}
On the other hand, taking a discrete inner product with \reff{PKS2nd_2} by $-(\phi^{n+1}-\phi^n)$, we have
\begin{equation}\label{KSED:eq3}
\begin{aligned}
-\frac{\theta}{\Delta t}\|\phi^{n+1}-\phi^n\|^2_2=&\frac{\mu}{2}\left(\|\nabla_h\phi^{n+1}\|^2-\|\nabla_h\phi^{n}\|^2\right)\\
&+\frac{\alpha}{2}\left(\|\phi^{n+1}\|^2-\|\phi^{n}\|^2\right)-\frac{\chi}{2}\ciptwo{\rho^n+\rho^{n+1}}{\phi^{n+1}-\phi^n}.
\end{aligned}
\end{equation}
Moreover, the following equality is valid:
\begin{equation}\label{KSED:eq4}
-\frac{\chi}{2}\ciptwo{\rho^{n+1}-\rho^n}{\phi^{n+1}+\phi^n}-\frac{\chi}{2}\ciptwo{\rho^n+\rho^{n+1}}{\phi^{n+1}-\phi^n}=-\chi\left(\ciptwo{\rho^{n+1}}{\phi^{n+1}}-\ciptwo{\rho^{n}}{\phi^{n}}\right).
\end{equation}
A combination of \reff{KSED:eq1}-\reff{KSED:eq4} leads to the energy dissipation inequality~\reff{KSED:eq0}. \qed
\end{proof}

\begin{remark}
Structure-preserving properties, such as the unique solvability of positive numerical solution~\reff{+rho} and the original energy dissipation~\reff{KSED:eq0}, can be analogously proved for the PKS system with a bounded mobility~\eqref{f_2} or a saturation density~\eqref{f_3}.
\end{remark}

\begin{remark}
If the free energy contains both the convex and concave parts, some existing works have reported a modified energy stability analysis for various second order accurate, multi-step numerical schemes~\cite{ChenJingWangWangWise_CiCP22, DongWangZhangZhang_CiCP20, HuangShen_2021SISC}. However, these reported stability analysis is in terms of a modified discrete energy. In comparison, the stability estimate~\eqref{KSED:eq0} is in terms of the original free energy \reff{EnergyKS1}, which turns out to be a remarkable theoretical result that has been rarely reported. 
\end{remark}

\section{Convergence analysis}\label{s:convergence}
Let $(\phi_e,\rho_e)$ be the exact solution to the PKS system \reff{KSeqn}. The following regularity assumption is made:
\[
\phi_e,\rho_e\in\mathcal{R}:=H^6(0,T;C(\Omega))\cap H^5(0,T;C^2(\Omega))\cap L^\infty(0,T;C^6(\Omega)).
\]
In addition, the following separation property is assumed for the exact solution, for the convenience of the analysis:
\[
\rho_e\geq \ve^*, \mbox{ for some }\ve^*>0,~ \mbox{at a point-wise level}.
\]

Define $\rho_N:=\mathcal{P}_N \rho_e(\cdot,t)$ and $\phi_N:=\mathcal{P}_N \phi_e(\cdot,t)$ as the Fourier Cosine projection of the exact solution into $\mathcal{P}^K$, which is the space of trigonometric polynomials in $x$, $y$, and $z$ of degree up to $K$ (with $K=N-1$). See more details in our previous work~\cite{DingWangZhou_NMTMA19}. The following projection approximation is standard, for $(\phi_e,\rho_e)\in L^\infty(0,T;H^m(\Omega))$, with $m\in\mathbb{N}$, $0\leq k\leq m$: 
\begin{equation}
\begin{aligned}
\|\phi_N-\phi_e \|_{L^{\infty}(0,T;H^k)}&\leq Ch^{m-k}\| \phi_e\|_{L^{\infty}(0,T;H^m)},\\
\|\rho_N-\rho_e \|_{L^{\infty}(0,T;H^k)}&\leq Ch^{m-k}\| \rho_e\|_{L^{\infty}(0,T;H^m)}.
\end{aligned}
  \label{projection-ets-1}
\end{equation}
Notice that the Fourier Cosine projection estimate does not preserve the positivity of the variables, while we are always able to take $h$ sufficiently small (corresponding to a large $N$) so that
\[
\rho_N\geq \frac{1}{2}\ve^*.
\]
Denote by $\phi^n_N=\phi_N(\cdot,t_n)$ and $\rho^{n}_N=\rho_N(\cdot,t_n)$, with $t_n=n\dt$. Since $\rho_N^n \in \mathcal{P}^K$, the mass conservative property is available at the discrete level:
\[
\overline{\rho^{n}}=\frac{1}{|\Omega|}\int_{\Omega} \rho(\cdot,t_n)d\x=\frac{1}{|\Omega|}\int_{\Omega} \rho(\cdot,t_{n-1})d\x=\overline{\rho^{n-1}}~~\mbox{for}~~n\in\N^*.
\]
On the other hand, the numerical solution of the second-order scheme (\ref{PKS2nd_1}-\ref{PKS2nd_2}) is also mass conservative at the discrete level:
\[
\overline{\rho^{n-1}}=\overline{\rho^{n}}~~\mbox{for}~~n\in\N^*.
\]
In turn, the mass conservative projection is made for the initial data:
 \[
 \begin{aligned}
 &\phi^{0}=\mathcal{P}_h \phi_N(\cdot,t=0):=\phi_N(p_i,p_j,p_k,t=0),\\
 &\rho^{0}=\mathcal{P}_h \rho_N(\cdot,t=0):=\rho_N(p_i,p_j,p_k,t=0).
 \end{aligned}
\]
Accordingly, the error grid functions are defined as
\begin{equation}\label{ca:eq1}
e^{n}_{\phi}:=\mathcal{P}_h \phi^{n}_N-\phi^{n},~~e^{n}_{\rho}:=\mathcal{P}_h \rho^{n}_N-\rho^{n} ,  \quad  n\in\mathbb{N}^*.
\end{equation}
As indicated above, one can verify that $\bar{e}^{n}_{\phi}=0$,  $\bar{e}^{n}_{\rho}=0$, for $n\in\mathbb{N}$, so that the discrete norm $\|\cdot \|_{-1,h}$ is well defined for the error grid functions.

The following theorem is the main result of this section.
\begin{theorem}\label{Th:convergence}
Given initial data $\phi_e(\cdot,t=0),\rho_e(\cdot,t=0) \in C^6(\Omega)$, suppose the exact solution for the PKS system \reff{KSeqn} is of regularity class $\mathcal{R}$. Let $e^{n}_{\phi}$ and $e^{n}_{\rho}$ be the error grid functions defined in~\reff{ca:eq1}.  Then, under the linear refinement requirement $\lambda_1h\leq \Delta t\leq \lambda_2h$, the following convergence result is available as $\dt, h \to 0$:
\begin{equation}\label{th:eq1}
\begin{aligned}
  &
 \big\|e^{n}_{\rho} \big\|_2 + \Big(\dt\sum_{k=0}^{n-1}\|\nabla_h ( e^{k}_{\rho} + e^{k+1}_\rho ) \|_2^2 \Big)^{\frac{1}{2}}
\\
  &
   \qquad+ \|e^{n}_{\phi} \|_{2} + \| \nabla_h e^n_\phi \|_2
 + \Big( \dt\sum_{k=0}^{n-1}\|\Delta_h ( e^k_{\phi} + e^{k+1}_\phi ) \|_2^2 \Big)^{\frac{1}{2}}
  \leq C(\Delta t^2+h^2),~n\in\N,
\end{aligned}
\end{equation}
where $t_{n}=n\dt\leq T$ and the constant $C>0$ is independent of $\Delta t$ and $h$.
\end{theorem}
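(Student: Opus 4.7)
The plan is to run a two-stage rough/refined error analysis, in the same spirit as the structure-preserving stability estimate proved in Theorem~\ref{thm: energy stability}. The difficulty is that the discrete energy identity \reff{KSED:eq0} controls $e^n_\rho$ only in combination with the singular logarithmic nonlinearity $S^{n+\frac12}$, which in turn requires a pointwise separation bound of the form $c_0\leq \rho^n\leq C_0$, uniformly in $\dt$ and $h$, in order to linearize. Since no such bound is available a priori for the numerical solution, I would first build a higher-order asymptotic expansion $\rho^\star = \rho_N + \dt\,\rho^{(1)} + \dt^2\rho^{(2)} + h^2\rho^{(3)}$ and $\phi^\star$ analogously, chosen to satisfy \reff{PKS2nd_1}--\reff{PKS2nd_2} up to a truncation of size $O(\dt^3 + h^3)$. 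The correction fields $\rho^{(j)}, \phi^{(j)}$ are obtained by solving linear elliptic-parabolic problems derived from formally expanding the modified Crank-Nicolson operator, the mobility regularization \reff{MidAp}, and the Taylor-expanded logarithmic term \reff{Nolite}; regularity of $(\rho_e,\phi_e)$ in $\mathcal{R}$ and the separation $\rho_e\geq\ve^*$ guarantee that each correction is smooth and bounded.

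Next I would derive the error equations for $\tilde e^n_\rho := \mathcal{P}_h \rho^\star(\cdot,t_n) - \rho^n$ and $\tilde e^n_\phi := \mathcal{P}_h \phi^\star(\cdot,t_n) - \phi^n$. Testing the $\rho$-error equation against $\gamma(S^{n+\frac12}_{\star} - S^{n+\frac12}) - \tfrac{\chi}{2}(\tilde e^{n+1}_\phi + \tilde e^n_\phi) + \tfrac{\chi^2\dt}{4\theta}(\tilde e^{n+1}_\rho - \tilde e^n_\rho)$, and testing the $\phi$-error equation against $-(\tilde e^{n+1}_\phi - \tilde e^n_\phi)$, mimics the proof of Theorem~\ref{thm: energy stability}. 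The cross terms cancel exactly as in \reff{KSED:eq4}, so this yields a discrete energy-type inequality in which the principal quantity controlled is $\|\nabla_h(\tilde e^{n+1}_\phi)\|_2 + \|\tilde e^{n+1}_\phi\|_2 + \|\tilde e^{n+1}_\rho - \tilde e^n_\rho\|_2 + \dt\,[\tfrac{1}{f''(\hat\rho^{n+\frac12})}\nabla_h v^{n+\frac12}_{\rm err},\nabla_h v^{n+\frac12}_{\rm err}]$. This is the \emph{rough} estimate: using the truncation bound $O(\dt^3 + h^3)$ and Gr\"onwall, together with Lemma~\ref{LemLf} on $\calL^{-1}$ and the monotonicity in Lemma~\ref{lem:mono}, I obtain $\|\tilde e^n_\rho\|_2 + \|\tilde e^n_\phi\|_{H^1_h} \leq C(\dt^{5/2} + h^{5/2})$. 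An inverse inequality combined with the linear refinement $\lambda_1 h \leq \dt \leq \lambda_2 h$ then gives $\|\tilde e^n_\rho\|_\infty \leq Ch^{1/2} \to 0$, and because of the higher-order expansion $\rho^\star - \rho_N = O(\dt + h^2)$ in $L^\infty$, one concludes the pointwise bound $\tfrac{\ve^*}{4}\leq \rho^n \leq 2\|\rho_e\|_\infty$ uniformly in $n,\dt,h$.

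With this separation established, I can pass to the \emph{refined} estimate. The key step is to exploit the Taylor form of $S^{n+\frac12}$: using $f(\rho)=\rho(\ln\rho-1)$, a direct computation on the interval where $\rho^n,\rho^{n+1}\geq\ve^*/4$ shows that
\[
\langle \tilde e^{n+1}_\rho - \tilde e^n_\rho,\; S^{n+\frac12}_\star - S^{n+\frac12}\rangle
\;\geq\; \mathcal{E}^{n+1}_{\mathrm{err}} - \mathcal{E}^{n}_{\mathrm{err}} - C\dt\bigl(\|\tilde e^{n+1}_\rho\|_2^2 + \|\tilde e^{n}_\rho\|_2^2\bigr) - C(\dt^5 + h^5),
\]
where $\mathcal{E}^{n}_{\mathrm{err}}\geq c\|\tilde e^n_\rho\|_2^2$ is a non-negative nonlinear functional of the logarithmic type, obtained from the convexity of $\rho\mapsto\rho\ln\rho$ under the separation bound and a discrete integration-by-parts in time (this replaces the simple Taylor-remainder estimate \reff{KSED:eq2} at the error level). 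The mobility discrepancy $f''(\hat\rho^{n+\frac12})^{-1} - f''(\rho_e(t_{n+\frac12}))^{-1}$ is Lipschitz in $\rho$ thanks to the positivity floor, so the resulting drift term is of the form $C\dt\|\nabla_h \tilde e^n_\rho\|_2 \cdot \|\nabla_h v^{n+\frac12}_{\rm err}\|_2$, absorbed by the dissipative quadratic on the right.

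The main obstacle, which I would treat most carefully, is exactly this conversion of the logarithmic error inner product into a discrete time derivative of a coercive functional: the standard Crank-Nicolson analysis for polynomial nonlinearities does not apply, and the cubic Taylor correction $\tfrac16 f'''(\rho^{n+1})(\rho^{n+1}-\rho^n)^2$ in \reff{Nolite} produces triple products of differences that must be controlled by the $\ell^\infty$ bound from the rough stage together with the bound $\|\rho^{n+1}-\rho^n\|_\infty \leq C\dt$ (itself obtained from the refined expansion). Once this cornerstone inequality is established, summing in $n$ and applying the discrete Gr\"onwall lemma to the combined quantity $\|\tilde e^n_\rho\|_2^2 + \|\tilde e^n_\phi\|_2^2 + \|\nabla_h \tilde e^n_\phi\|_2^2 + \dt\sum_k(\|\nabla_h(\tilde e^k_\rho + \tilde e^{k+1}_\rho)\|_2^2 + \|\Delta_h(\tilde e^k_\phi + \tilde e^{k+1}_\phi)\|_2^2)$ yields the claimed $O(\dt^2 + h^2)$ bound for $\tilde e^n$, and the triangle inequality with \reff{projection-ets-1} transfers the estimate to $e^n_\rho$, $e^n_\phi$ as stated in \reff{th:eq1}.
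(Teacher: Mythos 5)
Your overall architecture (higher-order consistency expansion, rough estimate to secure an $\ell^\infty$ separation bound, refined estimate, Gr\"onwall, triangle inequality back to $e^n_\rho$) matches the paper's, but two of your central steps do not hold up as stated. First, the cornerstone of your refined stage --- the claim that $\langle \tilde e^{n+1}_\rho - \tilde e^n_\rho,\, S^{n+\frac12}_\star - S^{n+\frac12}\rangle$ telescopes into $\mathcal{E}^{n+1}_{\rm err}-\mathcal{E}^{n}_{\rm err}$ for some coercive functional $\mathcal{E}^{n}_{\rm err}\ge c\|\tilde e^n_\rho\|_2^2$ --- is asserted rather than proved, and it is precisely the step the paper does \emph{not} attempt. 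The dissipation argument of Theorem~\ref{thm: energy stability} works because $S^{n+\frac12}$ is a Taylor approximation of $f'$ along the \emph{solution} trajectory with a sign condition on $f^{(4)}$; the difference $S^{n+\frac12}_\star - S^{n+\frac12}$ is not the modified Crank--Nicolson derivative of any convex functional of the error alone, and no such $\mathcal{E}^n_{\rm err}$ is exhibited. The paper instead switches the test function in the refined stage to $\tilde\rho^{n+1}+\tilde\rho^n$ and proves (Proposition~\ref{prop:CA2}, Appendix~\ref{Ap:B}) a pointwise linearization $D_x\tilde S^{n+\frac12}=\frac{D_x(\tilde\rho^{n+1}+\tilde\rho^n)}{2A_x\rho^{n+1}}+\zeta^{n+\frac12}$ with a controllable remainder, exploiting the symmetric Taylor expansion of $\ln$ about the cell midpoint and the identity $D_x\tilde\rho^{n+1}-\frac12 D_x(\tilde\rho^{n+1}-\tilde\rho^n)=\frac12 D_x(\tilde\rho^{n+1}+\tilde\rho^n)$; that computation is the technical heart of the convergence proof and is absent from your proposal. (It is also what produces the terms $\dt\sum_k\|\nabla_h(e^k_\rho+e^{k+1}_\rho)\|_2^2$ and, after testing the $\phi$-equation additionally by $-\Delta_h(\tilde\phi^{n+1}+\tilde\phi^n)$, the $\Delta_h$-sum in \reff{th:eq1}, neither of which follows from your choice of test functions.)

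Second, your rough stage cannot be closed ``by Gr\"onwall.'' Testing the $\rho$-error equation by the error chemical potential produces $\langle\tilde\rho^n,\tilde v^{n+\frac12}\rangle\le\frac{2}{\ve^*\dt}\|\tilde\rho^n\|_{-1,h}^2+\frac{\ve^*\dt}{8}\|\nabla_h\tilde v^{n+\frac12}\|_2^2$, i.e.\ a right-hand side carrying $\dt^{-1}\|\tilde\rho^n\|_2^2$; iterating such an inequality over $O(\dt^{-1})$ steps blows up. The paper's rough estimate is a single-step bootstrap: assuming $\|\tilde\rho^k\|_2\le\dt^{\frac{15}{4}}+h^{\frac{15}{4}}$ at previous steps, it concedes half a power and yields only $\|\tilde\rho^{n+1}\|_2\le\dt^3+h^3$ --- enough for the $\ell^\infty$, $W_h^{1,\infty}$ and $\|\rho^{n+1}-\rho^n\|_\infty\le C\dt$ bounds, but not self-propagating; the induction is closed only by the $O(\dt^4+h^4)$ refined estimate. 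This is also why the paper's expansion \reff{Consis:eq0} starts at $\dt^2$ and is pushed to fourth-order consistency: your $O(\dt^3+h^3)$ truncation leaves no room for the $\dt^{-\frac12}$ loss in the bootstrap plus the $h^{-\frac32}$ and $h^{-\frac52}$ losses in the inverse inequalities for $\|\tilde e_\rho\|_\infty$ and $\|\nabla_h\tilde e_\rho\|_\infty$, and your leading correction $\dt\,\rho^{(1)}$, unless shown to vanish, would already cap the final rate at first order. These defects are repairable by adopting the paper's orders, induction structure, and refined-stage test function, but as written the proposal has genuine gaps at both stages.
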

\vskip2mm

\subsection{Higher-order consistency analysis}
The leading local truncation error will not be sufficient to recover an $\ell^\infty$ bound of the discrete temporal derivative of the numerical solution, which is needed in the nonlinear convergence analysis. To overcome this subtle difficulty, we apply a higher order consistency estimate via a perturbation analysis~\cite{LiuWangWiseYueZhou_2021, LiuWang_JSC2023}. Such a higher order consistency result is stated below, and the detailed proof follows a similar idea as in~\cite{LiuWang_JSC2023}. The technical details are skipped for the sake of brevity.

\begin{proposition}
Let $(\phi_e,\rho_e)$ be the exact solution to the PKS system~\reff{PKSeqns} and $(\phi_N, \rho_N )$ be its Fourier Cosine projection.
There exists auxiliary variables, $\phi_{\dt,1}$, $\phi_{\dt,2}$, $\phi_{h,1}$, $\rho_{\dt,1}$, $\rho_{\dt,2}$, $\rho_{h,1}$, so that the following expansion profiles
\begin{equation} \label{Consis:eq0}
\begin{aligned}
&\check{\phi} =\phi_N+ \mathcal{P}_N\left( \dt^2  \phi_{\dt,1} +\dt^3  \phi_{\dt,2} +h^2  \phi_{h,1}\right), \\
&\check{\rho} =\rho_N+\mathcal{P}_N\left( \dt^2  \rho_{\dt,1} +\dt^3  \rho_{\dt,2} +h^2  \rho_{h,1}\right) ,
\end{aligned}
\end{equation}
satisfy the numerical scheme up to an $O(\dt^4+h^4)$ consistency:
\begin{equation}\label{Consis:eq1}
\begin{aligned}
\frac{\crho^{n+1}-\crho^{n}}{\dt}=&\nabla_h\cdot\Big[ \Big( \frac{3}{2}\crho^{n}-\frac{1}{2}\crho^{n-1} \Big)\nabla_h \Big(\gamma \check{S}^{n+\frac{1}{2}}- \frac{\chi}{2}(\cphi^n+\cphi^{n+1}) \\
  & \qquad\qquad\qquad\qquad\quad
 +\frac{\chi^2\dt}{4\theta} (\crho^{n+1}-\crho^n) \Big) \Big]+\tau^{n+\frac{1}{2}}_\rho,\\
 \check{S}^{n+\frac{1}{2}}= & \ln (\crho^{n+1}) - \frac{1}{2 \crho^{n+1}} (\crho^{n+1}-\crho^n) -\frac{1}{6 ( \crho^{n+1})^2 } (\crho^{n+1}-\crho^n)^2 , \\
\theta\frac{\cphi^{n+1}-\cphi^n}{\dt}=&\frac{\mu}{2} \Delta_h ( \cphi^{n+1}+ \cphi^n)-\frac{\alpha}{2}(\cphi^{n+1}+\cphi^n)+\frac{\chi}{2}(\crho^{n+1}+\crho^n)+\tau^{n+\frac{1}{2}}_\phi,
\end{aligned}
\end{equation}
with $\|\tau^{n+\frac{1}{2}}_\rho\|_2,\|\tau^{n+\frac{1}{2}}_\phi\|_2\leq C(\dt^4+h^4)$.
The constructed variables $\phi_{\dt,1}$, $\phi_{\dt,2}$, $\phi_{h,1}$, $\rho_{\dt,1}$, $\rho_{\dt,2}$, $\rho_{h,1}$ solely depend on the exact solution $(\phi_e,\rho_e)$, and their derivatives are bounded.

(1) The following mass conservative identities and zero-mean property for the local truncation error are available:
\begin{equation}\label{ConstSols:eq0}
\begin{aligned}
&\rho^{0}\equiv\crho^{0},~~\overline{\rho^{n}}=\overline{\rho^{0}},~~n\in \mathbb{N},\\
&\overline{\crho^{n}}=\frac{1}{|\Omega|}\int_{\Omega} \crho(\cdot,t_n)d\x=\frac{1}{|\Omega|}\int_{\Omega} \crho^{0}d\x=\overline{\crho^{0}},~~n\in \mathbb{N},\\
&\overline{\tau^{n+\frac{1}{2}}_\rho}=0,~~n\in \mathbb{N}.
\end{aligned}
\end{equation}

(2) A similar phase separation property is valid for the constructed $\crho$, for some $\ve^* >0$:
\begin{equation}\label{ConstSols:eq1}
\crho\geq \ve^* >0.
\end{equation}

(3) A discrete $W^{1,\infty}$ bound for the constructed profile $\crho$, as well as its discrete temporal derivative, is available at any time step $t^k$:
\begin{equation} \label{ConstSols:eq2}
 \| \crho^k \|_{\infty}\leq C^*, \quad
 \| \nabla_h\crho^k \|_{\infty}\leq C^* ,  \quad
 \| \crho^k - \crho^{k-1} \|_\infty \le C^* \dt , \quad
 \| \nabla_h ( \crho^k - \crho^{k-1} ) \|_\infty \le C^* \dt .
\end{equation}

\end{proposition}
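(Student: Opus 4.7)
The approach is a constructive higher-order asymptotic expansion in the spirit of the template in~\cite{LiuWang_JSC2023}. The plan is to insert the Fourier Cosine projection $(\phi_N,\rho_N)$ into the numerical scheme (\ref{PKS2nd_1})--(\ref{PKS2nd_2}) and perform a careful Taylor expansion in both time and space. The modified Crank--Nicolson discretizations of $\partial_t\phi$ and of the density flux $\nabla_h\cdot[\frac{1}{f''(\hat{\rho}^{n+1/2})}\nabla_h(\gamma S^{n+1/2}-\frac{\chi}{2}(\phi^{n+1}+\phi^n))]$ produce $O(\dt^2)$ truncation terms that, because the scheme is built around the midpoint, are themselves temporal derivatives of smooth functions of $(\phi_e,\rho_e)$; the extrapolation $\hat{\rho}^{n+1/2}$ contributes an additional $O(\dt^2)$ term of the same form, and the centered finite-difference operators contribute an $O(h^2)$ term by the standard stencil analysis. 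Collecting the residuals, one finds a leading structure $\dt^2 R^\phi_{\dt,1}+\dt^3 R^\phi_{\dt,2}+h^2 R^\phi_{h,1}+O(\dt^4+h^4)$ and analogously for $\rho$, with each $R^{\bullet}$ a smooth functional of $(\phi_e,\rho_e)$.

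\textbf{Construction of correctors.} I would then define $\phi_{\dt,1},\rho_{\dt,1}$ (and at the next order $\phi_{\dt,2},\rho_{\dt,2},\phi_{h,1},\rho_{h,1}$) by solving linearized continuous PDEs obtained by freezing coefficients at $(\phi_e,\rho_e)$ and setting the right-hand side to the corresponding $R^{\bullet}$. Because each corrector is multiplied by $\dt^2$, $\dt^3$, or $h^2$, the resulting equations at each order are linear in the new unknown, with lower-order correctors appearing only as data, so they can be solved sequentially. The linearized system around $(\phi_e,\rho_e)$ is a coupled parabolic-elliptic pair of the same type as (\ref{KSeqn}), and with homogeneous Neumann data standard parabolic regularity gives smooth solutions. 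I would impose zero spatial mean at every time on the $\rho$-correctors; combined with applying $\mathcal{P}_N$ in (\ref{Consis:eq0}), this delivers $\overline{\crho^n}=\overline{\crho^0}$ and $\overline{\tau^{n+1/2}_\rho}=0$, establishing property~(1).

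\textbf{Verification and main obstacle.} For property~(2), the separation $\crho\ge\ve^*$ follows from the exact-solution bound $\rho_e\ge 2\ve^*$ and the fact that $\|\rho_N-\rho_e\|_\infty+\dt^2\|\rho_{\dt,1}\|_\infty+\dt^3\|\rho_{\dt,2}\|_\infty+h^2\|\rho_{h,1}\|_\infty$ can be made $\le\ve^*$ for $\dt,h$ small. Property~(3), the discrete $W^{1,\infty}$ bounds on $\crho^k$ and on its temporal difference, is inherited from the $C^1$ regularity of $\rho_e$ and the smoothness of the correctors, together with the stability of the Fourier Cosine projection and interpolation on smooth functions; the temporal-difference bound follows from the mean-value theorem applied to the smooth-in-time profile $\crho(\cdot,t)$. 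The hard part will be controlling the singular logarithmic contribution in $S^{n+1/2}$: expanding $\ln(\crho^{n+1})-\frac{1}{2\crho^{n+1}}(\crho^{n+1}-\crho^n)-\frac{1}{6(\crho^{n+1})^2}(\crho^{n+1}-\crho^n)^2$ requires a priori knowledge that $\crho$ is bounded away from zero, which itself depends on the correctors being small, forcing a bootstrap argument. It is precisely the modified Crank--Nicolson form of $S^{n+1/2}$, obtained by Taylor-expanding $\ln$ about $\crho^{n+1}$ up to third order, that guarantees the cancellation needed to push the logarithmic residual down to $O(\dt^4)$, and this is the step where the scheme's specific design is used most essentially.
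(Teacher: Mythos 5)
Your proposal follows essentially the same route as the paper, which itself states only that the result is obtained "via a perturbation analysis" in the style of the cited reference \cite{LiuWang_JSC2023} and omits all details: insert the Fourier Cosine projection into the scheme, Taylor-expand the temporal, extrapolation, and stencil errors, construct the correctors sequentially from linearized PDEs driven by the residuals at each order (with zero mean and zero initial data so that the mass identities and $\rho^0\equiv\crho^0$ hold), and then read off the separation and discrete $W^{1,\infty}$ bounds from the smoothness of the exact solution and the smallness of the $\dt^2$-, $\dt^3$-, and $h^2$-weighted corrections. Your identification of the one-sided Taylor expansion of $\ln$ in $S^{n+\frac12}$ as the source of the odd-order $\dt^3$ corrector and as the delicate step is consistent with the paper's construction.
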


\subsection{A rough error estimate}
Instead of analyzing the original numerical error functions defined in~\eqref{ca:eq1}, we consider the following ones
\begin{equation}\label{Higherr}
\td{\phi}^n:=\mathcal{P}_h \cphi^n-\phi^n,~~\td{\rho}^{n}:=\mathcal{P}_h \crho^{n}-\rho^{n} , \quad  n\in \mathbb{N}.
\end{equation}
For the convenience of the notation, the following average numerical error functions are introduced at the intermediate time instant $t_{n+\frac{1}{2}}$:
\begin{equation*}
\begin{aligned}
  &
\check{\rho}^{n+\frac{1}{2}}=\frac{3}{2}\check{\rho}^{n}-\frac{1}{2}\check{\rho}^{n-1} ,
\\
  &
\td{\hat{\rho}}^{n+\frac{1}{2}}=\check{\rho}^{n+\frac{1}{2}}-\hat{\rho}^{n+\frac{1}{2}}
= \frac32 \crho^n - \frac12 \crho^{n-1} - \Big( ( \frac32 \rho^n - \frac12 \rho^{n-1} )^2
+ \dt^8 \Big)^\frac12 .
\end{aligned}
\end{equation*}

Subtracting the numerical scheme (\ref{PKS2nd_1}-\ref{PKS2nd_2}) from the consistency estimate \reff{Consis:eq1} yields
\begin{align}
&\frac{\td{\rho}^{n+1}-\td{\rho}^{n}}{\Delta t}=\nabla_h\cdot\left(\hat{\rho}^{n+\frac{1}{2}}\nabla_h \td v^{n+\frac{1}{2}}+\td{\hat{\rho}}^{n+\frac{1}{2}}\nabla_h \calV^{n+\frac{1}{2}} \right)+\tau^{n+\frac{1}{2}}_{\rho},\label{ErrFun:eq0-1}\\
&\theta\frac{\td\phi^{n+1}-\td\phi^n}{\Delta t}=\frac{\mu}{2}\Delta_h(\td\phi^n+\td\phi^{n+1})-\frac{\alpha}{2}(\td\phi^{n+1}+\td\phi^n)+\frac{\chi}{2}(\td\rho^{n+1}+\td\rho^n)+\tau^{n+\frac{1}{2}}_\phi,\label{ErrFun:eq0-2}
\end{align}
where
\begin{equation}\label{ErrFun:eq1}
\begin{aligned}
\td v^{n+\frac{1}{2}}=&\gamma \td S^{n+\frac{1}{2}} - \frac{\chi}{2}(\tilde{\phi}^{n+1} + \tilde{\phi}^n)
  + \frac{\chi^2 \dt}{4 \theta}  (\tilde{\rho}^{n+1} - \tilde{\rho}^n) ,\\
 \td{S}^{n+\frac12} = & \ln (\crho^{n+1}) - \ln (\rho^{n+1}) - \frac{1}{2 \rho^{n+1}}
 ( \tilde{\rho}^{n+1}- \tilde{\rho}^n) + \frac{\tilde{\rho}^{n+1}}{2 \crho^{n+1} \rho^{n+1}} (\crho^{n+1} - \crho^n) \\
   &   - \frac{\crho^{n+1} - \crho^n + \rho^{n+1}-\rho^n}{6 ( \rho^{n+1})^2 }
   (\tilde{\rho}^{n+1} - \tilde{\rho}^n)
    + \frac{( \crho^{n+1} + \rho^{n+1} ) \tilde{\rho}^{n+1} }{6 ( \crho^{n+1})^2 ( \rho^{n+1} )^2 }
      (\crho^{n+1}-\crho^n)^2 , \\
\calV^{n+\frac{1}{2}}=&\gamma \check{S}^{n+\frac{1}{2}}-\frac{\chi}{2}(\cphi^{n+1}+\cphi^n)+\frac{\chi^2\dt}{4\theta}( \crho^{n+1}-\crho^n).\\
\end{aligned}
\end{equation}

A discrete $W_h^{1,\infty}$ bound could be assumed for $\calV^{n+\frac{1}{2}}$, due to the fact that it only depends on the exact solution and the constructed profiles:
\begin{equation}\label{A-priAssum:eq0}
\| \calV^{n+\frac{1}{2}}\|_{W^{1,\infty}_h}\leq C^*.
\end{equation}
In addition, we make the following a-prior assumption at the previous time steps:
\begin{equation}\label{A-priAssum:eq1}
\| \td \rho^{k}\|_2, \, \, \, \| \td \phi^{k}\|_2\leq \dt^{\frac{15}{4}}+h^{\frac{15}{4}}, \quad
   k=n, n-1,n-2.
\end{equation}
Such an a-priori assumption will be recovered by the optimal rate convergence analysis at the next time step, as will be proved later. Thanks to the inverse inequality, the $W_h^{1, \infty}$ bound for the numerical error function is available at the previous time steps:
\begin{equation}\label{presolns:eq1}
\begin{aligned}
&\| \td \rho^{k}\|_\infty\leq \frac{C\|\td \rho^{k} \|_2}{h^\frac{3}{2}}\leq \frac{C(\dt^{\frac{15}{4}}+h^{\frac{15}{4}})}{h^\frac{3}{2}}\leq C(\dt^{\frac{9}{4}}+h^{\frac{9}{4}})\leq \frac{\ve^*}{2},\\
&\| \nabla_h\td \rho^{k}\|_\infty\leq \frac{2 \| \td \rho^{k} \|_\infty}{h}\leq\frac{C(\dt^{\frac{9}{4}}+h^{\frac{9}{4}})}{h}\leq C(\dt^{\frac{5}{4}}+h^{\frac54})\leq 1,
\end{aligned}
\end{equation}
where the linear refinement constraint $\lambda_1 h\leq \dt\leq\lambda_2 h$ has been used. Subsequently, combined with the regularity assumption \reff{ConstSols:eq2}, a $W_h^{1, \infty}$ bound for the numerical solution could be derived at the previous time steps:
\begin{equation}\label{presolns:eq2}
\begin{aligned}
&\|  \rho^{k}\|_\infty\leq \| \crho^{k}\|_\infty+\| \td \rho^{k}\|_\infty\leq C^*+\frac{\ve^*}{2}:=\ckC_0,~~k=n, n-1,n-2,\\
&\|  \nabla_h\rho^{k}\|_\infty\leq \| \nabla_h\crho^{k}\|_\infty+\| \nabla_h\td \rho^{k}\|_\infty\leq C^*+1 := \tilde{C}_0 .
\end{aligned}
\end{equation}
Its combination with the separation estimate for $\crho$ results in a similar separation property for the numerical solution at the previous time steps:
\begin{equation}\label{presolns:eq3}
\rho^{k}\geq \crho^{k}-\| \td \rho^{k}\|_\infty\geq \frac{\ve^*}{2},~~k=n, n-1,n-2.
\end{equation}
Moreover, the discrete temporal derivative of the numerical solution at the previous time steps has to be bounded, for $k=n, n-1, n-2$, and such a bound will be useful in the later analysis:
\begin{equation}
\begin{aligned}
  &
  \| \tilde{\rho}^k - \tilde{\rho}^{k-1} \|_\infty \le  \| \tilde{\rho}^k \|_\infty + \| \tilde{\rho}^{k-1} \|_\infty
  \le C (\dt^\frac94 + h^\frac94)  \le \dt ,
\\
  &
  \| \rho^k - \rho^{k-1} \|_\infty \le \| \crho^k - \crho^{k-1} \|_\infty
  + \| \tilde{\rho}^k - \tilde{\rho}^{k-1} \|_\infty  \le ( C^* + 1 ) \dt = \tilde{C}_0 \dt ,  \, \, \,
  \mbox{(by~\eqref{ConstSols:eq2})} .
\end{aligned}
  \label{presolns:eq4}
\end{equation}

The following preliminary estimate will be used in the later analysis; its proof is based on direct calculations. The details are left to interested readers.
\begin{lemma}
The following bounds are valid at the intermediate time instant $t_{n+\frac{1}{2}}$:
\begin{equation}\label{lemim}
\begin{aligned}
& \frac{\ve^*}{2} \le \hat{\rho}^{n+\frac12} \le \check{C}_0 , \quad
 \| \td{\hat{\rho}}^{n+\frac{1}{2}} \|_2\leq \frac32 \|\td \rho^{n} \|_2
 + \frac12 \| \td \rho^{n-1}\|_2 + \dt^4 ,
 \\
   &
   \| \hat{\rho}^{n+\frac12} - \hat{\rho}^{n-\frac12} \|_\infty \le \frac32 \| \rho^n - \rho^{n-1} \|_\infty +
   \frac12 \| \rho^{n-1} - \rho^{n-2} \|_\infty + 2 \dt^4 \le 2 \td C_0 \dt .
\end{aligned}
\end{equation}
\end{lemma}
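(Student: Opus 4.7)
The plan is to view $\hat{\rho}^{n+\frac{1}{2}}$ as the image of the linear extrapolation $A^n := \tfrac{3}{2}\rho^{n} - \tfrac{1}{2}\rho^{n-1}$ under the smooth regularization $g(s) := \sqrt{s^{2} + \dt^{8}}$ of $|s|$. Two elementary properties of $g$ will drive everything: the pointwise gap $0 \le g(s) - |s| \leq \dt^{4}$ (from the inequality $\sqrt{a+b} \le \sqrt{a} + \sqrt{b}$), and the $1$-Lipschitz bound $|g(s_{1}) - g(s_{2})| \leq |s_{1} - s_{2}|$ arising from $|g'(s)| = |s|/g(s) \leq 1$. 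Once $g$ is recognized as a smooth, $1$-Lipschitz regularization of the absolute value, each of the four assertions in \reff{lemim} reduces to an elementary statement about $A^{n}$, either pointwise or in comparison with $A^{n-1}$ or with its consistent counterpart $\tfrac{3}{2}\crho^{n} - \tfrac{1}{2}\crho^{n-1}$.

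For the pointwise upper and lower bounds on $\hat{\rho}^{n+\frac{1}{2}}$, I would first write $A^{n} = \rho^{n} + \tfrac{1}{2}(\rho^{n} - \rho^{n-1})$. Combining the separation estimate $\rho^{n} \ge \ve^{*}/2$ from \reff{presolns:eq3} with the $O(\dt)$ temporal increment $\|\rho^{n} - \rho^{n-1}\|_{\infty} \leq \td C_{0}\,\dt$ from \reff{presolns:eq4} shows that $A^{n}$ is strictly positive and bounded below by $\ve^{*}/2$ once $\dt$ is small enough (the $O(\dt)$ perturbation from the increment term being absorbed into the slack between $\ve^{*}$ and $\ve^{*}/2$ already exploited in \reff{presolns:eq3}). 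The lower bound $\hat{\rho}^{n+\frac{1}{2}} \ge |A^{n}| \ge \ve^{*}/2$ follows immediately, while the upper bound is obtained from $\hat{\rho}^{n+\frac{1}{2}} \le |A^{n}| + \dt^{4}$ together with $\|A^{n}\|_{\infty} \le \tfrac{3}{2}\|\rho^{n}\|_{\infty} + \tfrac{1}{2}\|\rho^{n-1}\|_{\infty}$ and the bound \reff{presolns:eq2}, after a mild retuning of $\ckC_{0}$.

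For the $L^{2}$ bound on $\td{\hat{\rho}}^{n+\frac{1}{2}}$, the key decomposition is to add and subtract $A^{n}$:
\[
\td{\hat{\rho}}^{n+\frac{1}{2}} = \Bigl(\tfrac{3}{2}\crho^{n} - \tfrac{1}{2}\crho^{n-1} - A^{n}\Bigr) + \bigl(A^{n} - g(A^{n})\bigr) = \Bigl(\tfrac{3}{2}\td\rho^{n} - \tfrac{1}{2}\td\rho^{n-1}\Bigr) + \bigl(A^{n} - g(A^{n})\bigr).
\]
Because $A^{n} > 0$ by the first step, the second bracket is exactly $A^{n} - \sqrt{(A^{n})^{2} + \dt^{8}}$, whose absolute value is bounded pointwise by $\dt^{4}$. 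The triangle inequality in $\|\cdot\|_{2}$ then delivers the claimed bound, with the harmless $|\Omega|^{1/2}$ factor absorbed into the $\dt^{4}$ constant.

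Finally, for the temporal-difference estimate, the $1$-Lipschitz property of $g$ gives
$|\hat{\rho}^{n+\frac{1}{2}} - \hat{\rho}^{n-\frac{1}{2}}| = |g(A^{n}) - g(A^{n-1})| \leq |A^{n} - A^{n-1}| \leq \tfrac{3}{2}\|\rho^{n}-\rho^{n-1}\|_{\infty} + \tfrac{1}{2}\|\rho^{n-1}-\rho^{n-2}\|_{\infty}$,
so the $+2\dt^{4}$ slack written in \reff{lemim} is free, and the final $\le 2\td C_{0}\,\dt$ follows from two applications of \reff{presolns:eq4}. The only real obstacle in the whole lemma is confirming the strict positivity of $A^{n}$ (and $A^{n-1}$), so that $|A|$ collapses to $A$ in the $\td{\hat{\rho}}$ decomposition and the Lipschitz/tail estimates for $g$ can be applied cleanly; everything else is elementary calculus on the smooth regularization $g$.
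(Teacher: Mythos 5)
The paper itself omits the proof (``its proof is based on direct calculations''), and your argument is precisely the intended direct calculation, organized cleanly around the two properties of $g(s)=\sqrt{s^2+\dt^8}$: the pointwise gap $0\le g(s)-|s|\le\dt^4$ and the $1$-Lipschitz bound. The decomposition $\td{\hat{\rho}}^{n+\frac12}=(\tfrac32\td\rho^n-\tfrac12\td\rho^{n-1})+(A^n-g(A^n))$ for the second claim, and the Lipschitz route $|g(A^n)-g(A^{n-1})|\le|A^n-A^{n-1}|$ for the third (which in fact renders the $+2\dt^4$ slack in \reff{lemim} unnecessary), are both correct, as is your honest remark that the $\dt^4$ term in the $\ell^2$ bound really carries a harmless $|\Omega|^{1/2}$ factor.

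The one step that does not close as written is the upper bound $\hat{\rho}^{n+\frac12}\le\ckC_0$. Your estimate $|A^n|\le\tfrac32\|\rho^n\|_\infty+\tfrac12\|\rho^{n-1}\|_\infty$ only yields roughly $2\ckC_0$, and ``a mild retuning of $\ckC_0$'' is not available: $\ckC_0=C^*+\tfrac{\ve^*}{2}$ is fixed by \reff{presolns:eq2} and \reff{propr:eq14} and is reused downstream (e.g.\ in \eqref{Lemd:eq5} and throughout Appendix B), so the lemma must be proved with that exact constant. The repair is the same device you already use for the lower bound: write $A^n=\rho^n+\tfrac12(\rho^n-\rho^{n-1})$, so that $|A^n|\le\|\rho^n\|_\infty+\tfrac12\td C_0\dt\le C^*+C(\dt^{9/4}+h^{9/4})+\tfrac12\td C_0\dt$ by \reff{presolns:eq1} and \reff{presolns:eq4}, and then $\hat{\rho}^{n+\frac12}\le|A^n|+\dt^4\le C^*+\tfrac{\ve^*}{2}=\ckC_0$ once $\dt$ and $h$ are small enough that the $O(\dt)$ terms fit inside the $\tfrac{\ve^*}{2}$ slack --- exactly the same absorption you invoke for the lower bound. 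With that substitution the proof is complete.
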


Before proceeding into the error estimate, a rough bound control of the nonlinear error inner products, namely, $\langle \td{\rho}^{n+1} , \gamma \td S^{n+\frac{1}{2}} \rangle$, is necessary. A preliminary estimate is stated in the following lemma; the detailed proof is  provided in Appendix \ref{Ap:A}.
\begin{lemma}\label{Lem:CA1}
Suppose the assumptions of the regularity requirement \reff{ConstSols:eq2}, phase separation \reff{ConstSols:eq1} for the constructed approximate solution $(\cphi,\crho)$, and the a-priori assumption \reff{A-priAssum:eq1} hold. In addition, let $\td \psi^n$ be an another error function with $\| \td \psi^n \|_\infty \le h$. Define the following set
\begin{equation}\label{Lemr:eq0}
\K=\left\{(i,j,k):~\rho_{i,j,k}\geq 2C^*+1 \right\},
\end{equation}
and denote $L^*:=| \K|$, the number of grid points in $\K$. Then there exists a constant $\ckC_2$ dependent only on $\ve^*$,  $\gamma$, $\ckC_0$ and $C^*$ such that
\begin{equation}\label{Lemr:eq1}
\langle \td{\rho}^{n+1} , \gamma \td S^{n+\frac{1}{2}} + \td \psi^n \rangle
\geq \frac{C^*}{6} \gamma L^* h^3-\ckC_2 ( \gamma^2 \|\td \rho^{n} \|^2_2 + \| \td \psi^n \|_2^2 ) . 
\end{equation}
In addition, if $L^*=0$, i.e., $\K$ is an empty set, there exists a constant $\ckC_3$ dependent on $C^*$ and $\gamma$ such that
\begin{equation}\label{Lemr:eq2}
\langle \td{\rho}^{n+1} , \gamma \td S^{n+\frac{1}{2}} + \td \psi^n \rangle
  \geq \ckC_3\|\td \rho^{n+1} \|^2_2-\ckC_2 ( \gamma^2\|\td \rho^{n} \|^2_2 + \| \td \psi^n \|_2^2 )  .
\end{equation}
\end{lemma}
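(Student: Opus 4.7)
My plan is to decompose $\td S^{n+\frac{1}{2}}$ into a principal logarithmic part plus three lower-order remainders, partition the grid according to the size of $\rho^{n+1}$, and carry out a region-by-region estimate using the a-priori bounds \reff{presolns:eq1}. From \reff{ErrFun:eq1} write $\td S^{n+\frac{1}{2}} = (\ln\crho^{n+1}-\ln\rho^{n+1}) + A_1 + A_2 + A_3$, where $A_1$ collects the first-order corrections and $A_2,A_3$ the two second-order ones. For the logarithmic part, the integral identity
\[
\td\rho^{n+1}(\ln\crho^{n+1}-\ln\rho^{n+1}) = \int_0^1 \frac{(\td\rho^{n+1})^2}{\rho^{n+1}+s\td\rho^{n+1}}\,ds \ge 0
\]
gives the pointwise lower bound $(\td\rho^{n+1})^2/\max(\rho^{n+1},\crho^{n+1})$ and, when $\rho^{n+1}/\crho^{n+1}\ge 2$, the sharper form $|\td\rho^{n+1}|\ln(\rho^{n+1}/\crho^{n+1})$.

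Next I partition the grid points into three disjoint regions: (a) $\K$ itself, where $\rho^{n+1}\ge 2C^*+1$; (b) a \emph{bounded} region $\calB = \{\crho^{n+1}/8 < \rho^{n+1} < 2C^*+1\}$, on which $1/\rho^{n+1}\le 8/\ve^*$; and (c) a \emph{singular} region $\calR = \{0<\rho^{n+1}\le \crho^{n+1}/8\}$, which forces $\td\rho^{n+1}\ge 7\crho^{n+1}/8>0$ and $r:=\rho^{n+1}/\crho^{n+1}\le 1/8$. These three regions are disjoint and cover the grid. On $\K$, with $s=\rho^{n+1}/\crho^{n+1}\ge 2$ and $|\td\rho^{n+1}|\ge C^*+1$, the sharper log bound combined with the worst perturbation from $\td\rho^{n+1}A_1$ satisfies
\[
|\td\rho^{n+1}|\ln s - \frac{(\td\rho^{n+1})^2}{2\rho^{n+1}} = |\td\rho^{n+1}|\Bigl[\ln s - \frac{1-1/s}{2}\Bigr] \ge (C^*+1)\bigl(\ln 2 - \tfrac{1}{4}\bigr) > \frac{C^*}{6},
\]
so summing over $\K$ supplies the announced $\frac{C^*}{6}\gamma L^*h^3$. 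The remaining perturbations on $\K$ use $1/\rho^{n+1}\le 1/(2C^*+1)$ and Young's inequality to fold into $\ckC_2(\gamma^2\|\td\rho^n\|_2^2 + \|\td\psi^n\|_2^2)$.

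On $\calB$ the factor $1/\rho^{n+1}$ is bounded, so every perturbation piece becomes a product of two error quantities with bounded coefficients, and standard Young estimates yield $c\|\td\rho^{n+1}\|_{2,\calB}^2 - \ckC_2(\gamma^2\|\td\rho^n\|_2^2 + \|\td\psi^n\|_2^2)$ after absorbing the log contribution. The delicate region is $\calR$, where $1/\rho^{n+1}$ is unbounded; the worst term $-\gamma(\td\rho^{n+1})^2/(2\rho^{n+1})$ from $\td\rho^{n+1}A_1$ has to be absorbed by the positive higher-order term $+\gamma\td\rho^{n+1}(\td\rho^{n+1}-\td\rho^n)^2/(6(\rho^{n+1})^2)$ coming from $\td\rho^{n+1}A_3$. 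A direct calculation using $\td\rho^{n+1}=(1-r)\crho^{n+1}$ gives
\[
\frac{\td\rho^{n+1}(\td\rho^{n+1}-\td\rho^n)^2}{6(\rho^{n+1})^2} - \frac{(\td\rho^{n+1})^2}{2\rho^{n+1}} = \frac{\td\rho^{n+1}}{6(\rho^{n+1})^2}\bigl[(1-r)(1-4r)(\crho^{n+1})^2 - 2(1-r)\crho^{n+1}\td\rho^n + (\td\rho^n)^2\bigr],
\]
and for $r\le 1/8$ the bracket is bounded below by $(7/16)(\crho^{n+1})^2 - 2\crho^{n+1}|\td\rho^n| > 0$, thanks to the smallness $\|\td\rho^n\|_\infty \le C(\dt^{9/4}+h^{9/4})$ established in \reff{presolns:eq1}. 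The remaining pieces of $\td\rho^{n+1}A_1$, the (nonnegative) $\td\rho^{n+1}A_2$, and the cross term $-\delta\crho\,\Delta\td\rho\,\td\rho^{n+1}/(3(\rho^{n+1})^2)$ inside $\td\rho^{n+1}A_3$ are each of order $O(\dt)$ relative to this compensation and can be absorbed by Young.

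To close, the $\langle\td\rho^{n+1},\td\psi^n\rangle$ contribution is handled by Cauchy--Schwarz and Young, absorbing the $\epsilon\|\td\rho^{n+1}\|_2^2$ part into the nonnegative contributions accumulated on $\calB$ and $\calR$, and using $\|\td\psi^n\|_\infty\le h$ to express any constant-size residues as $O(h^2)\le \ckC_2\|\td\psi^n\|_2^2$. When $\K=\emptyset$, the regions $\calB$ and $\calR$ exhaust the grid and the log bound together with the $\calR$-compensation produce $\ckC_3\|\td\rho^{n+1}\|_2^2$, yielding \reff{Lemr:eq2} with $\ckC_3=\gamma/(2C^*+1)$; when $\K\ne\emptyset$, the dominant $\frac{C^*}{6}\gamma L^*h^3$ term from $\K$ gives \reff{Lemr:eq1}. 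The main obstacle is the compensation argument on $\calR$: that the dangerous $1/\rho^{n+1}$ singularity generated by the first-order Taylor correction $A_1$ is precisely offset by the second-order correction $A_3$ hinges on the specific form of the Taylor expansion \reff{Nolite} chosen in the scheme together with the a-priori smallness of $\td\rho^n$ at previous time steps.
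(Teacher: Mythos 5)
Your overall strategy --- split $\td S^{n+\frac12}$ into a logarithmic leading part plus Taylor-correction remainders and argue region by region --- is genuinely different from the paper's, and it runs into trouble exactly where the paper's single algebraic observation saves the day. The paper groups \emph{all} of the $\td\rho^{n+1}$-dependence of $\td S^{n+\frac12}$ into one term $\td S_2^{n+\frac12}=\big[\frac{1}{\xi}-\frac{5\rho^n}{6\xi^2}+\frac{(\rho^n)^2}{3\xi^3}\big]\td\rho^{n+1}$ (mean value theorem applied only in the $\rho^{n+1}$-slot, with $\rho^n$ frozen) and then completes the square: $\frac{1}{\xi}-\frac{5\rho^n}{6\xi^2}+\frac{(\rho^n)^2}{3\xi^3}=\frac{2(\rho^n-\frac54\xi)^2+\frac{23}{8}\xi^2}{6\xi^3}\ge\frac{23}{48\xi}>0$ for every $\xi>0$ and every $\rho^n$. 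This makes the implicit part monotone with an explicit positive coefficient uniformly over all regimes of $\rho^{n+1}$, so only a two-case split ($\rho^{n+1}<2C^*+1$ versus $\rho^{n+1}\ge 2C^*+1$) remains. Your decomposition forgoes this monotonicity and therefore must beat the singular term $-\gamma(\td\rho^{n+1})^2/(2\rho^{n+1})$ by hand in each region, and two of those regional arguments do not close as written.

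First, on your region $\calB=\{\crho^{n+1}/8<\rho^{n+1}<2C^*+1\}$ you claim bounded coefficients plus Young yield $c\|\td\rho^{n+1}\|_{2,\calB}^2$ ``after absorbing the log contribution.'' But with $t=\rho^{n+1}/\crho^{n+1}$ the log gain equals $\frac{(\td\rho^{n+1})^2(-\ln t)}{(1-t)\crho^{n+1}}$ while the loss is $\frac{(\td\rho^{n+1})^2}{2t\crho^{n+1}}$; the ratio is $\frac{-2t\ln t}{1-t}$, which falls below $1$ for $t\lesssim 0.28$. Hence on the slab $\crho^{n+1}/8<\rho^{n+1}\lesssim 0.28\,\crho^{n+1}$ --- inside $\calB$, not $\calR$ --- the net quadratic coefficient is \emph{negative}, and the cubic compensation you deploy only on $\calR$ is still required there; without it \reff{Lemr:eq2} cannot be obtained with a positive $\ckC_3$. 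Second, on $\K$ you assert the remaining perturbations ``fold into $\ckC_2(\gamma^2\|\td\rho^n\|_2^2+\|\td\psi^n\|_2^2)$,'' but the term $\gamma\td\rho^{n+1}(\td\rho^{n+1}-\td\rho^n)^2/(6(\rho^{n+1})^2)$ is negative on $\K$ (where $\td\rho^{n+1}<0$) with magnitude up to roughly $\frac{\gamma}{6}|\td\rho^{n+1}|$ per grid point, since $|\td\rho^{n+1}|\le\rho^{n+1}$ there; being cubic in $\td\rho^{n+1}$ it cannot be charged to previous-step norms, and it is of the same order as your gain $(C^*+1)(\ln 2-\frac14)\gamma$. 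It must instead be compared pointwise against the log term --- one needs $\ln s-\frac{1-1/s}{2}-\frac{(1-1/s)^2}{6}$ bounded below by a suitable constant for $s\ge 2$, which happens to hold but is never verified. Your $\calR$ computation itself is correct, but these two regions require the same delicate compensation, which is precisely what the paper's completion-of-square delivers in one stroke.
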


The following proposition states a rough error estimate.
\begin{proposition} \label{prop:rough}
Based on the regularity requirement assumption \reff{A-priAssum:eq0} for the constructed profile $\calV^{n+\frac{1}{2}}$, as well as the a-priori assumption \reff{A-priAssum:eq1} for the numerical solution at the previous time steps, a rough error estimate is available:
\begin{equation}\label{propr:eq0}
\|\td\rho^{n+1} \|_2 \leq \dt^3 + h^3.
\end{equation}
\end{proposition}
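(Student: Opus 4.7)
The plan is to imitate the original energy dissipation proof of Theorem~\ref{thm: energy stability} applied to the error equations \reff{ErrFun:eq0-1}--\reff{ErrFun:eq0-2}, thereby obtaining a bound in a discrete $H^{-1}$-type norm, and then to upgrade to $L^2$ via an inverse inequality together with the linear refinement $\lambda_1 h\le\dt\le\lambda_2 h$. I would take the discrete inner product of \reff{ErrFun:eq0-1} with $\dt\td v^{n+\frac12}$ and of \reff{ErrFun:eq0-2} with $-(\td\phi^{n+1}-\td\phi^n)$, and sum the two identities. Summation by parts produces the coercive term $-\dt\eipvec{\hat\rho^{n+\frac12}\nabh\td v^{n+\frac12}}{\nabh\td v^{n+\frac12}}\le-\tfrac{\ve^*\dt}{2}\|\nabh\td v^{n+\frac12}\|_2^2$ via the lower bound in \reff{lemim}, along with the telescoping $\td\phi$-energy increments, the non-negative stabilization $\tfrac{\chi^2\dt}{4\theta}\|\td\rho^{n+1}-\td\rho^n\|_2^2$, and the friction $\tfrac{\theta}{\dt}\|\td\phi^{n+1}-\td\phi^n\|_2^2$. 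The mixed $\ciptwo{\td\rho}{\td\phi}$ couplings collapse into $-\chi(\ciptwo{\td\rho^{n+1}}{\td\phi^{n+1}}-\ciptwo{\td\rho^n}{\td\phi^n})$ exactly as in the proof of Theorem~\ref{thm: energy stability}.

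The main obstacle is the nonlinear log contribution $\sum_{k=0}^n\ciptwo{\td\rho^{k+1}-\td\rho^k}{\gamma\td S^{k+\frac12}}$, since Lemma~\ref{Lem:CA1} provides a lower bound only for the one-time pairing $\ciptwo{\td\rho^{n+1}}{\gamma\td S^{n+\frac12}+\td\psi^n}$, not for the time-difference inner product. I would bridge this mismatch by Abel summation by parts,
\[
\sum_{k=0}^n\ciptwo{\td\rho^{k+1}-\td\rho^k}{\gamma\td S^{k+\frac12}}
=\ciptwo{\td\rho^{n+1}}{\gamma\td S^{n+\frac12}}
-\ciptwo{\td\rho^0}{\gamma\td S^{\frac12}}
-\sum_{k=1}^n\ciptwo{\td\rho^k}{\gamma(\td S^{k+\frac12}-\td S^{k-\frac12})} .
\]
Lemma~\ref{Lem:CA1} with $\td\psi^n\equiv 0$ then lower-bounds the boundary term at $t_{n+1}$ by either the non-negative floor $\tfrac{C^*}{6}\gamma L^*h^3$ or $\ckC_3\|\td\rho^{n+1}\|_2^2$, minus $\ckC_2\gamma^2\|\td\rho^n\|_2^2$; the initial term is negligible by the mass-conservative projection defining $\rho^0$. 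For each $k\le n-1$, the increment $\td S^{k+\frac12}-\td S^{k-\frac12}$ is $O(\dt)$ in $L^\infty$ by the previous-step a-priori bounds \reff{presolns:eq2}--\reff{presolns:eq4} together with \reff{lemim}, and the $k=n$ contribution is handled by a more delicate argument that exploits the smallness $\|\td\rho^n\|_2\le\dt^{15/4}+h^{15/4}$ from \reff{A-priAssum:eq1} to override the uncontrolled norm of $\td S^{n+\frac12}$; the overall residual is of size $C\dt\sum_{k=1}^n\|\td\rho^k\|_2^2$ plus a small remainder.

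The remaining inputs are standard. The mobility-perturbation term $-\dt\eipvec{\td{\hat\rho}^{k+\frac12}\nabh\calV^{k+\frac12}}{\nabh\td v^{k+\frac12}}$ is bounded via Cauchy--Schwarz using \reff{A-priAssum:eq0} and the $L^2$ estimate of $\td{\hat\rho}^{k+\frac12}$ from \reff{lemim}, with Young's inequality absorbing a fraction of the diffusive energy and leaving a residue of order $C\dt(\|\td\rho^k\|_2^2+\|\td\rho^{k-1}\|_2^2+\dt^8)$. The truncation pairings are controlled by weighted $\calL^{-1}_{\hat\rho^{k+\frac12}}$- and $L^2$-Young splittings, using $\|\tau_\rho^{k+\frac12}\|_2,\|\tau_\phi^{k+\frac12}\|_2\le C(\dt^4+h^4)$ and $\hat\rho^{k+\frac12}\ge\ve^*/2$. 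The boundary cross-term $-\chi\ciptwo{\td\rho^{n+1}}{\td\phi^{n+1}}$ is absorbed via the zero-mean duality $|\chi\ciptwo{\td\rho^{n+1}}{\td\phi^{n+1}}|\le\tfrac{\mu}{4}\|\nabh\td\phi^{n+1}\|_2^2+C\|\td\rho^{n+1}\|_{-1,h}^2$, with $\|\td\rho^{n+1}\|_{-1,h}$ bootstrapped by iterating \reff{ErrFun:eq0-1} in the discrete $H^{-1}$ norm. A discrete Gronwall argument then yields
\[
\|\td\rho^{n+1}\|_{-1,h}^2+\|\td\phi^{n+1}\|_{H_h^1}^2+\dt\sum_{k=0}^n\|\nabh\td v^{k+\frac12}\|_2^2\le C(\dt^4+h^4)^2 ,
\]
and the inverse inequality $\|\td\rho^{n+1}\|_2\le Ch^{-1}\|\td\rho^{n+1}\|_{-1,h}$ combined with $\dt\le\lambda_2 h$ delivers $\|\td\rho^{n+1}\|_2\le C(\dt^3+h^3)\le\dt^3+h^3$ for $\dt,h$ sufficiently small. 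The delicate step is the Abel summation above: reconciling Lemma~\ref{Lem:CA1}'s single-time structure with the time-difference pairing produced by the natural test function, while keeping the resulting $\td S$-increments harmless.
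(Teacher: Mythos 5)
Your proposal diverges substantially from the paper's proof, and the divergence introduces a gap that prevents the argument from closing. The paper's proof is a \emph{single-time-step} estimate: it tests \reff{ErrFun:eq0-1} with $\td v^{n+\frac12}$ once, keeps $\langle \td\rho^{n+1},\td v^{n+\frac12}\rangle$ on the left, moves $\langle \td\rho^{n},\td v^{n+\frac12}\rangle$ to the right where it is annihilated by $\frac{2}{\ve^*\dt}\|\td\rho^{n}\|_{-1,h}^2\le C(\dt^{13/2}+h^{13/2})$ thanks to \reff{A-priAssum:eq1}, eliminates $\td\phi^{n+1}$ entirely by inverting $\calL_1$ in \reff{ErrFun:eq0-2} so that $\td v^{n+\frac12}=\gamma\td S^{n+\frac12}+\td\psi^n+\calG_h\td\rho^{n+1}$ with $\|\td\psi^n\|_2$ small, and then invokes Lemma~\ref{Lem:CA1} together with the monotonicity of $\calG_h$. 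After the contradiction argument forces $L^*=0$, the coercive case \reff{Lemr:eq2} yields $\ckC_3\|\td\rho^{n+1}\|_2^2\le C(\dt^{13/2}+h^{13/2})$, i.e.\ an $L^2$ bound of order $\dt^{13/4}+h^{13/4}$ \emph{directly}, with no passage through $\|\cdot\|_{-1,h}$ and no inverse inequality at the end. Your telescoped energy-dissipation route discards this structure and forces you to reconstruct the $L^2$ control from an accumulated $H^{-1}$-type quantity, which is where the numerology fails.

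Concretely: under \reff{A-priAssum:eq1} the best available size for the accumulated residuals in your Gronwall step is $O(\dt^{13/2}+h^{13/2})$, not the claimed $C(\dt^4+h^4)^2=O(\dt^8+h^8)$. Already the single term $\ckC_2\gamma^2\|\td\rho^n\|_2^2$ from the Abel boundary is of size $(\dt^{15/4})^2\cdot\dt^{-1}$ after division by $\dt$, and your Abel increment sum is bounded at best by $\sum_{k}\|\td\rho^k\|_2(\|\td S^{k+\frac12}\|_2+\|\td S^{k-\frac12}\|_2)\lesssim n\,\dt^{15/2}\sim\dt^{13/2}$ (the increments $\td S^{k+\frac12}-\td S^{k-\frac12}$ are \emph{not} $O(\dt)$ multiples of the errors; they are full-size linear combinations of consecutive error functions, so the residual is not of the form $C\dt\sum_k\|\td\rho^k\|_2^2$). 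With $\|\td\rho^{n+1}\|_{-1,h}^2\le C(\dt^{13/2}+h^{13/2})$ the inverse inequality $\|\td\rho^{n+1}\|_2\le Ch^{-1}\|\td\rho^{n+1}\|_{-1,h}$ gives only $\|\td\rho^{n+1}\|_2\le C(\dt^{9/4}+h^{9/4})$, which misses the target $\dt^3+h^3$ and, more importantly, is too weak for the downstream steps \reff{propr:eq13}--\reff{propr:eq15}: it would give $\|\nabla_h\td\rho^{n+1}\|_\infty\le C(\dt^{-1/4}+h^{-1/4})$, which is unbounded. Two further unresolved points: the left-hand side of your telescoped identity does not naturally produce $\|\td\rho^{n+1}\|_{-1,h}^2$ (the ``bootstrapping by iterating \reff{ErrFun:eq0-1}'' is not an argument), and the boundary cross term $-\chi\langle\td\rho^{n+1},\td\phi^{n+1}\rangle$ must be absorbed either into $\ckC_3\|\td\rho^{n+1}\|_2^2$ (whose constant $\ckC_3=\frac{23\gamma}{96(2C^*+1)}$ is not large enough in general) or into a $\|\td\rho^{n+1}\|_{-1,h}^2$ term that is not present on the left. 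The fix is to abandon the telescoping and run the paper's one-step argument, where the convexity encoded in Lemma~\ref{Lem:CA1} supplies the $L^2$ coercivity at $t_{n+1}$ for free.
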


\begin{proof}
Taking a discrete inner product with \reff{ErrFun:eq0-1} by $\td v^{n+\frac{1}{2}}$ leads to
\begin{equation}\label{propr:eq1}
\begin{aligned}
\langle \td{\rho}^{n+1} , \td v^{n+\frac{1}{2}} \rangle +\dt \langle \hat{\rho}^{n+\frac{1}{2}}\nabla_h \td v^{n+\frac{1}{2}} , \nabla_h\td v^{n+\frac{1}{2}} \rangle =& \langle \td{\rho}^{n} , \td v^{n+\frac{1}{2}} \rangle +\dt \langle \tau^{n+\frac{1}{2}}_\rho , \td v^{n+\frac{1}{2}} \rangle \\
&-\dt \langle \td{\hat{\rho}}^{n+\frac{1}{2}}\nabla_h \calV^{n+\frac{1}{2}} , \nabla_h\td v^{n+\frac{1}{2}} \rangle .
\end{aligned}
\end{equation}
Applying the separation estimate \reff{lemim}  for the mobility functions $\hat{\rho}^{n+\frac{1}{2}}$, we obtain the following inequality:
\begin{equation}\label{propr:eq2}
\langle \hat{\rho}^{n+\frac{1}{2}}\nabla_h \td v^{n+\frac{1}{2}} , \nabla_h\td v^{n+\frac{1}{2}} \rangle \geq \frac{\ve^*}{2}\| \nabla_h \td v^{n+\frac{1}{2}}\|^2_2.
\end{equation}
By the mean-free property for the local truncation error terms, the following estimate is obvious:
\begin{equation}\label{propr:eq3}
\langle \tau^{n+\frac{1}{2}}_\rho , \td v^{n+\frac{1}{2}} \rangle \leq \| \tau^{n+\frac{1}{2}}_\rho\|_{-1,h}\cdot \|\nabla_h\td v^{n+\frac{1}{2}} \|_2\leq \frac{2}{\ve^*}\| \tau^{n+\frac{1}{2}}_\rho\|^2_{-1,h}+\frac{\ve^*}{8}\|\nabla_h\td v^{n+\frac{1}{2}} \|_2^2.
\end{equation}
An application of the Cauchy inequality reveals that
\begin{equation}\label{propr:eq4}
\langle \td{\rho}^{n} , \td v^{n+\frac{1}{2}} \rangle \leq \|\td{\rho}^{n} \|_{-1,h}\cdot \| \nabla_h\td v^{n+\frac{1}{2}}\|_2\leq \frac{2}{\ve^*\dt}\|\td{\rho}^{n} \|^2_{-1,h}+\frac{\ve^*}{8}\dt\|\nabla_h\td v^{n+\frac{1}{2}} \|_2^2.
\end{equation}
Using discrete H$\ddot{o}$lder and Young's inequalities for the last term on the right hand side of \reff{propr:eq1}, we have
\begin{equation}\label{propr:eq5}
\begin{aligned}
-\ciptwo{\td{\hat{\rho}}^{n+\frac{1}{2}}\nabla_h \calV^{n+\frac{1}{2}}}{\nabla_h\td v^{n+\frac{1}{2}}}&\leq \|\nabla_h \calV^{n+\frac{1}{2}} \|_\infty \cdot\|\td{\hat{\rho}}^{n+\frac{1}{2}} \|_2\cdot\|\nabla_h\td v^{n+\frac{1}{2}} \|_2\\
&\leq C^* \| \td{\hat{\rho}}^{n+\frac{1}{2}}\|_2\cdot\|\nabla_h\td v^{n+\frac{1}{2}} \|_2\\
&\leq C^* \Big( \frac32 \| \td \rho^{n}\|_2+ \frac12 \|\td \rho^{n-1} \|_2 + \dt^4 \Big)\cdot\|\nabla_h\td v^{n+\frac{1}{2}} \|_2\\
&\leq \hat{C}_1 \Big( 3 \| \td{\rho}^{n}\|^2_2 +\| \td{\rho}^{n-1}\|^2_2 + \dt^8 \Big)+\frac{\ve^*}{8}\|\nabla_h\td v^{n+\frac{1}{2}} \|^2_2,
\end{aligned}
\end{equation}
where $\hat{C}_1=\frac{4 (C^*)^2}{\ve^*}$. Therefore, a substitution of (\ref{propr:eq2}-\ref{propr:eq5}) into \reff{propr:eq1} gives
\begin{equation}\label{propr:eq6}
\begin{aligned}
\langle \td{\rho}^{n+1} , \td v^{n+\frac{1}{2}} \rangle +\frac{\ve^*\dt}{8}\| \nabla_h \td v^{n+\frac{1}{2}}\|^2_2 \leq &\frac{2}{\ve^*\dt}\|\td{\rho}^{n} \|^2_{-1,h}+ \frac{2\dt}{\ve^*}\| \tau^{n+\frac{1}{2}}_\rho\|^2_{-1,h}  \\
 & +\hat{C}_1 \dt ( 3 \| \td{\rho}^{n}\|^2_2+\| \td{\rho}^{n-1}\|^2_2 + \dt^8 ) .
\end{aligned}
\end{equation}
Meanwhile, the numerical error evolutionary equation~\eqref{ErrFun:eq0-2} is equivalent to ${\cal L}_1 \tilde{\phi}^{n+1} = {\cal L}_2 \tilde{\phi}^n
   + \frac{\chi}{2} ( \tilde{\rho}^{n+1} + \tilde{\rho}^n )  + \tau_\phi^{n+\frac12}$, so that the linear error terms could be rewritten as
\[
   \tilde{\phi}^{n+1} =  {\cal L}_1^{-1} {\cal L}_2 \tilde{\phi}^n
   +  \frac{\chi}{2} {\cal L}_1^{-1}  ( \tilde{\rho}^{n+1} + \tilde{\rho}^n )
   + {\cal L}_1^{-1} \tau_\phi^{n+\frac12}.
\]
This in turn gives 
\[
   - \frac{\chi}{2}(\tilde{\phi}^{n+1} + \tilde{\phi}^n)
  +  \frac{\chi^2}{4 \theta} \dt (\tilde{\rho}^{n+1} - \tilde{\rho}^n)
  = \td \psi^n + { \cal G}_h \tilde{\rho}^{n+1},
\]
where
\[
\td \psi^n = - \frac{\chi}{2}\calL_1^{-1}\calL_2 \tilde{\phi}^{n}
   - \frac{\chi^2}{4}\calL_1^{-1} \tilde{\rho}^n - \frac{\chi}{2} \tilde{\phi}^n
    - \frac{\chi^2}{4 \theta}  \dt \tilde{\rho}^n
    - \frac{\chi}{2} {\cal L}_1^{-1} \tau_\phi^{n+\frac12}.
\]
Subsequently, the following bounds could be derived:
\begin{align}
  &
  \| \calL_1^{-1}\calL_2 \tilde{\phi}^{n}  \|_2 \le \| \tilde{\phi}^n \|_2 , \, \,
  \| \frac{\chi}{2}\calL_1^{-1}\calL_2 \tilde{\phi}^{n}
   + \frac{\chi}{2} \tilde{\phi}^n \|_2 \le \chi \| \tilde{\phi}^n \|_2 ,  \, \,
   \| {\cal L}_1^{-1} \tau_\phi^{n+\frac12}  \|_2
   \le \theta^{-1} \dt \| \tau_\phi^{n+\frac12}  \|_2 , \nonumber 
\\
  &
  \| \calL_1^{-1} \tilde{\rho}^n \|_2 \le \theta^{-1} \dt \| \tilde{\rho}^n \|_2 , \quad
  \| \frac{\chi^2}{4}\calL_1^{-1} \tilde{\rho}^n  +  \frac{\chi^2}{4 \theta}  \dt \tilde{\rho}^n \|_2
  \le  \frac{\chi^2}{2 \theta}  \dt \| \tilde{\rho}^n \|_2  , \nonumber 
\\
  &
  \mbox{so that} \quad \| \td \psi^n \|_2 \le \chi \| \tilde{\phi}^n \|_2
  + \frac{\chi^2}{2 \theta}  \dt \| \tilde{\rho}^n \|_2 + \frac{\chi}{2 \theta}  \dt  \| \tau_\phi^{n+\frac12}  \|_2
  \le C ( \dt^\frac{15}{4} + h^\frac{15}{4} ) , 
   \label{propr:eq8-2}
\end{align}
in which the inequalities $\| {\cal L}_1^{-1} {\cal L}_2  f \|_2 \le \| f \|_2$, $\| {\cal L}_1^{-1} f \|_2 \le \theta^{-1} \dt \| f \|_2$, and the a-priori assumption \reff{A-priAssum:eq1}, have been repeatedly applied in the derivation. Of course, an application of inverse inequality indicates that
\begin{equation}
  \| \td \psi^n \|_\infty \le \frac{C \| \td \psi^n \|_2}{h^\frac32}
  \le \frac{C ( \dt^\frac{15}{4} + h^\frac{15}{4} )}{h^\frac32}
  \le C ( \dt^\frac{9}{4} + h^\frac{9}{4} ) \le h ,  \quad
  \mbox{since $\lambda_1 h \le \dt \le \lambda_2 h$} .
  \label{propr:eq8-3}
\end{equation}
As a consequence, an application of the rough bound control \reff{Lemr:eq1} (in Lemma \ref{Lem:CA1}) gives
\begin{equation}\label{propr:eq8-4}
\begin{aligned}
\langle \td{\rho}^{n+1} , \gamma \td S^{n+\frac{1}{2}} + \td \psi^n \rangle
\geq \frac{C^*}{6}\gamma  L^* h^3- \ckC_2 ( \gamma^2 \|\td \rho^{n} \|^2_2 + \| \td \psi^n \|_2^2 ) .
\end{aligned}
\end{equation}
Moreover, the monotonicity estimate~\eqref{lem:mono-0} of the operator $\calG_h$ (in Lemma~\ref{lem:mono}) implies that
\begin{equation}\label{propr:eq8-5}
\begin{aligned}
\ciptwo{\td{\rho}^{n+1}}{\calG_h\td \rho^{n+1}} \ge 0 .
\end{aligned}
\end{equation}
In turn,  a combination of~\eqref{propr:eq8-4} and \eqref{propr:eq8-5} leads to
\begin{equation}
  \langle \td{\rho}^{n+1} , \td v^{n+\frac{1}{2}} \rangle
  \ge \frac{C^*}{6}\gamma  L^* h^3 - \ckC_2 ( \gamma^2\|\td \rho^{n} \|^2_2 + \| \td \psi^n \|_2^2 ) .
    \label{propr:eq8-7}
\end{equation}
Its combination with \eqref{propr:eq6} reveals that
\begin{equation}
\begin{aligned}
  \frac{C^*}{6} \gamma L^* h^3 + \frac{\ve^*\dt}{8}\| \nabla_h \td v^{n+\frac{1}{2}}\|^2_2
  \le &\frac{2}{\ve^*\dt}\|\td{\rho}^{n} \|^2_{-1,h}
  + \frac{2\dt}{\ve^*}\| \tau^{n+\frac{1}{2}}_\rho\|^2_{-1,h}
  + \ckC_2 \| \td \psi^n \|_2^2
\\
  &
  + ( \ckC_2\gamma^2 + 1) \|\td \rho^{n} \|^2_2
  +\hat{C}_1 \dt ( \| \td{\rho}^{n-1}\|^2_2 + \dt^8 ) ,  \, \,  \mbox{if} \, \, \, 3\hat{C}_1 \dt \le 1 .
\end{aligned}
  \label{propr:eq9}
\end{equation}
The following bounds for the right hand side terms are available, based on the a-priori assumption \reff{A-priAssum:eq1}, the preliminary estimate~\eqref{propr:eq8-2}, as well as the higher order truncation error accuracy:
\begin{equation}\label{propr:eq10}
\begin{aligned}
&\frac{2}{\ve^*\dt}\|\td{\rho}^{n} \|^2_{-1,h}\leq \frac{2C}{\ve^*\dt}\|\td{\rho}^{n} \|^2_2\leq C (\dt^{\frac{13}{2}}+h^{\frac{13}{2}}), \\
& \ckC_2 \| \td \psi^n \|_2^2 , \, \, ( \ckC_2 \gamma^2+ 1) \|\td \rho^{n} \|^2_2
\le  C(\dt^{\frac{15}{2}}+h^{\frac{15}{2}}), \\
&\frac{2\dt}{\ve^*}\|\tau^{n+\frac{1}{2}}_\rho\|^2_{-1,h}\leq C\dt\|\tau^{n+\frac{1}{2}}_\rho\|^2_2\leq C(\dt^9+\dt h^8), \\
& \hat{C}_1 \dt \| \td{\rho}^{n-1}\|^2_2  \le C \dt (\dt^{\frac{15}{2}}+h^{\frac{15}{2}})
\le C ( \dt^{\frac{17}{2}}+ \dt h^{\frac{15}{2}} ). 
\end{aligned}
\end{equation}
 Again, the inequality $\| f\|_{-1,h}\leq C\| f\|_2$ and the linear refinement requirement $\lambda_1 h\leq \dt\leq\lambda_2 h$, have been used. Going back to~\eqref{propr:eq9}, we arrive at
\[
 \frac{C^*}{6} \gamma L^* h^3\leq C(\dt^{\frac{13}{2}}+h^{\frac{13}{2}}).
\]
If $L^*\geq 1$, this inequality could make a contradiction, provided that $\dt$ and $h$ are sufficiently small. Therefore, we conclude that $L^*=0$. In turn, an improved estimate~\eqref{Lemr:eq2}, as given by Lemma~\ref{Lem:CA1}, becomes available. As a direct consequence, we obtain
\begin{equation}
\begin{aligned}
  \langle \td{\rho}^{n+1} , \td v^{n+\frac{1}{2}} \rangle
  \ge & \ckC_3 \| \td \rho^{n+1} \|_2^2 - \ckC_2 ( \gamma^2\|\td \rho^{n} \|^2_2 + \| \td \psi^n \|_2^2 ) ,
  \quad \mbox{so that}
\\
  \ckC_3 \| \td \rho^{n+1} \|_2^2 + \frac{\ve^*\dt}{8}\| \nabla_h \td v^{n+\frac{1}{2}}\|^2_2
  \le &\frac{2}{\ve^*\dt}\|\td{\rho}^{n} \|^2_{-1,h}
  + \frac{2\dt}{\ve^*}\| \tau^{n+\frac{1}{2}}_\rho\|^2_{-1,h}
  + \ckC_2 \| \td \psi^n \|_2^2 + \hat{C}_1 \dt^9
\\
  &
  + ( \ckC_2 \gamma^2+ 1) \|\td \rho^{n} \|^2_2
  +\hat{C}_1 \dt \| \td{\rho}^{n-1}\|^2_2  \le C(\dt^{\frac{13}{2}}+h^{\frac{13}{2}}) .
\end{aligned}
    \label{propr:eq11}
\end{equation}
In particular, we see that
\begin{equation}
  \| \td \rho^{n+1}\|_2 \le  C (\dt^{\frac{13}{4}} + h^{\frac{13}{4}})
  \le \dt^3 + h^3 ,   \label{propr:eq12}
\end{equation}
under the linear refinement requirement $\lambda_1 h\leq \dt\leq\lambda_2 h$. This inequality is exactly the rough error estimate \reff{propr:eq0}, and the proof of Proposition~\ref{prop:rough} is completed.
\qed
\end{proof}

With the rough error estimate~\eqref{propr:eq0} at hand, we are able to establish the $W_h^{1,\infty}$ bound of the numerical solution for the density variable. A direct application of 3-D inverse inequality gives
\begin{equation}\label{propr:eq13}
\begin{aligned}
  &
 \|\td\rho^{n+1} \|_\infty \le \frac{C\|\td\rho^{n+1} \|_2}{h^{\frac{3}{2}}} \leq C(\dt^\frac32 +h^\frac32 )
 \leq \frac{\ve^*}{2} ,
\\
  &
  \| \nabla_h \td \rho^{n+1} \|_\infty \le \frac{2 \|\td\rho^{n+1} \|_\infty}{h} \leq C(\dt^\frac12 +h^\frac12 )
 \le 1 ,
\end{aligned}
\end{equation}
under the same linear refinement requirement. In turn, the following separation is valid at time step $t_{n+1}$:
\begin{equation}\label{propr:eq14}
\frac{\ve^*}{2}\leq\rho^{n+1} \leq C^*+\frac{\ve^*}{2} = \ckC_0 .
\end{equation}
This $\| \cdot\|_\infty$ bound will play a very important role in the refined error estimate. Moreover, a maximum norm bound also becomes available for $\nabla_h \rho^{n+1}$:
\begin{equation}\label{propr:eq15}
\|\nabla_h\rho^{n+1} \|_\infty \leq\|\nabla_h\crho^{n+1} \|_\infty +\|\nabla_h\td\rho^{n+1} \|_\infty
 \leq C^*+1 =\tdC_0.
\end{equation}
Meanwhile, the following bound, in terms of the discrete temporal derivative for the numerical solution at time step $t_{n+1}$, will be used in the refined error estimate:
\begin{equation}
\begin{aligned}
  &
  \| \tilde{\rho}^{n+1} - \tilde{\rho}^n \|_\infty \le  \| \tilde{\rho}^{n+1} \|_\infty + \| \tilde{\rho}^n \|_\infty
  \le 2 C (\dt^\frac32 + h^\frac32)  \le \dt ,
\\
  &
  \| \rho^{n+1} - \rho^n \|_\infty \le \| \crho^{n+1} - \crho^n \|_\infty
  + \| \tilde{\rho}^{n+1} - \tilde{\rho}^n \|_\infty  \le ( C^* + 1 ) \dt = \tilde{C}_0 \dt .
\end{aligned}
  \label{propr:eq16}
\end{equation}
In particular, the following observation is made
\begin{equation}
\begin{aligned}
  \| \rho^{n+1} - \hat{\rho}^{n+\frac12} \|_\infty 
  \le & \| (  \rho^{n+1} - \rho^n ) - \frac12 ( \rho^n - \rho^{n-1} ) \|_\infty + C \dt^4
\\
  \le &
  \|  \rho^{n+1} - \rho^n \|_\infty + \frac12 \| \rho^n - \rho^{n-1} \|_\infty + C \dt^4
  \le \frac32 \tdC_0 \dt .
\end{aligned}
  \label{propr:eq17-1}
\end{equation}
Meanwhile, by the fact that $\frac{\ve^*}{2} \le \rho^{n+1} , \,  \hat{\rho}^{n+\frac12} \le \tilde{C}_0$, it is clear that
\begin{equation}
  \frac34 \le \frac{\hat{\rho}^{n+\frac12}}{\rho^{n+1}} \le \frac54 ,  \quad
  \mbox{at a point-wise level, provided that $\dt$ is sufficiently small.}
   \label{propr:eq17-2}
\end{equation}

\subsection{A refined error estimate}
The rough error estimate~\eqref{propr:eq0} is not able to go through an induction argument. Therefore, a refined error estimate is needed to accomplish a closed loop of convergence analysis. Because of the Crank-Nicolson-style temporal discretization in the numerical design, the following preliminary estimate is necessary to control the nonlinear errors associated with the logarithmic diffusion part. 
The technical details of the proof are provided in Appendix \ref{Ap:B}.

\begin{proposition}\label{prop:CA2}
Assume that the a-priori $\| \cdot\|_\infty$ estimate \reff{presolns:eq2}-\reff{lemim} and the rough $\| \cdot\|_\infty$ estimates \reff{propr:eq14}-\reff{propr:eq17-2} hold for the numerical solution at the previous and next time steps, respectively. There exist positive constants $\tdC_1$, $M_1$, dependent only on $\ve^*$, $C^*$, $\gamma$, $\tdC_0$ and $|\Omega|$ such that
\begin{equation}\label{Lemn:eq0}
\begin{aligned}
\gamma \ciptwo{\hat{\rho}^{n+\frac{1}{2}}\nabla_h\td S^{n+\frac{1}{2}} }{\nabla_h ( \td\rho^{n+1} + \td \rho^n ) }\geq & \frac{\gamma }{4} \|\nabla_h ( \td\rho^{n+1} + \td \rho^n ) \|^2_2
  - \tdC_1 (\|\td \rho^{n+1} \|^2_2+\|\td \rho^{n} \|^2_2 ) - M_1 h^8  .
\end{aligned}
\end{equation}
\end{proposition}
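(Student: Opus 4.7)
The strategy rests on identifying a symmetric leading term hidden inside $\td S^{n+\frac{1}{2}}$. Although $\td S^{n+\frac{1}{2}}$ appears asymmetric, with $\td\rho^n$ entering only through the difference $\td\rho^{n+1}-\td\rho^n$, I expect a careful combination of the logarithmic difference and the second-order Crank--Nicolson correction to reveal the decomposition
\[
\td S^{n+\frac{1}{2}} \;=\; \frac{\td\rho^{n+1}+\td\rho^n}{2\rho^{n+1}} \;+\; \mathcal R^{n+\frac{1}{2}},
\]
where $\mathcal R^{n+\frac{1}{2}}$ collects pointwise-bounded factors times quantities that are either quadratic in $(\td\rho^{n+1},\td\rho^n)$ or that carry explicit factors of $\crho^{n+1}-\crho^n=O(\dt)$. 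Once this splitting is in hand, coercivity will come from pairing the first term with $\nabla_h(\td\rho^{n+1}+\td\rho^n)$ after multiplying by $\hat\rho^{n+\frac{1}{2}}$.

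First, I would expand the logarithmic difference $\ln\crho^{n+1}-\ln\rho^{n+1}$ by Taylor's theorem with integral remainder about $\rho^{n+1}$, obtaining a leading term $\td\rho^{n+1}/\rho^{n+1}$ together with a quadratic remainder whose denominator is controlled by the separation bounds $\rho^{n+1},\crho^{n+1}\ge\ve^*/2$ from \reff{ConstSols:eq1} and \reff{propr:eq14}. Adding $\td\rho^{n+1}/\rho^{n+1}$ to the second summand $-\frac{1}{2\rho^{n+1}}(\td\rho^{n+1}-\td\rho^n)$ in the definition of $\td S^{n+\frac{1}{2}}$ produces exactly the symmetric leading piece $(\td\rho^{n+1}+\td\rho^n)/(2\rho^{n+1})$. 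The three remaining summands in $\td S^{n+\frac{1}{2}}$ each carry an explicit factor $\crho^{n+1}-\crho^n$, of size $O(\dt)$ by \reff{ConstSols:eq2}, or a mixed product $\td\rho^{n+1}(\crho^{n+1}-\crho^n)$, so they can all be absorbed into $\mathcal R^{n+\frac{1}{2}}$.

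Next, I would compute $\nabla_h\td S^{n+\frac{1}{2}}$ via the discrete product identity $D_x(fg)=A_x(f)D_x(g)+D_x(f)A_x(g)$. Setting $u:=\td\rho^{n+1}+\td\rho^n$, the ``diffusive'' piece of $\nabla_h(u/(2\rho^{n+1}))$ reduces, modulo harmonic--versus--arithmetic averaging, to $\frac{1}{2}\nabla_h u/\calA_h(\rho^{n+1})$; the complementary ``convective'' piece carries a factor $\nabla_h\rho^{n+1}$, bounded in $L^\infty$ by \reff{propr:eq15}. Pairing the diffusive piece with $\nabla_h u$ and multiplying by $\hat\rho^{n+\frac{1}{2}}$ yields $\frac{\gamma}{2}\langle\hat\rho^{n+\frac{1}{2}}/\calA_h(\rho^{n+1})\,\nabla_h u,\nabla_h u\rangle$. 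The pointwise quotient bound $3/4\le\hat\rho^{n+\frac{1}{2}}/\rho^{n+1}\le 5/4$ from \reff{propr:eq17-2}, valid for sufficiently small $\dt$, then delivers a coercive contribution of at least $\frac{3\gamma}{8}\|\nabla_h u\|_2^2$. All remaining contributions, from the convective correction, from $\nabla_h\mathcal R^{n+\frac{1}{2}}$, and from $\td{\hat\rho}^{n+\frac{1}{2}}$--type terms, are paired with $\nabla_h u$ and handled by Cauchy--Schwarz and Young's inequality, each producing an acceptable bound of the form $\varepsilon\|\nabla_h u\|_2^2+C_\varepsilon(\|\td\rho^{n+1}\|_2^2+\|\td\rho^n\|_2^2)$ with $C_\varepsilon$ depending only on $\ve^*$, $C^*$, $\gamma$, $\tdC_0$ and $|\Omega|$; the $\dt^8$ regularization inside $\hat\rho^{n+\frac{1}{2}}$ is absorbed into $M_1 h^8$ using $\dt\le\lambda_2 h$.

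Choosing $\varepsilon$ small enough so that $\frac{3\gamma}{8}$ minus the cumulative $\varepsilon$-losses still exceeds $\frac{\gamma}{4}$ closes the estimate. The main obstacle I anticipate is bookkeeping: accurately tracking every cross term produced by the discrete quotient and product rules, and verifying that the ``convective'' pieces involving $\nabla_h\rho^{n+1}$, $\nabla_h\crho^{n+1}$, and the mismatch $\hat\rho^{n+\frac{1}{2}}-\rho^{n+1}$ genuinely yield controllable bounds without eroding the coercivity margin between $\frac{3\gamma}{8}$ and $\frac{\gamma}{4}$. A subtler bookkeeping issue is that no $\|\td\rho^{n-1}\|_2^2$ is permitted on the right-hand side, so every dependence on the previous time step must be squeezed into $\hat\rho^{n+\frac{1}{2}}\approx\rho^{n+\frac12}$ via the closeness estimates in \reff{lemim} and \reff{propr:eq17-2}, rather than emerging as a free quadratic error.
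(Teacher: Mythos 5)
Your proposal is correct and follows essentially the same route as the paper's Appendix B: isolate the symmetric leading part of $\td S^{n+\frac12}$ via the identity $\td\rho^{n+1}-\tfrac12(\td\rho^{n+1}-\td\rho^n)=\tfrac12(\td\rho^{n+1}+\td\rho^n)$, apply the discrete product rule $D_x(fg)=A_xf\,D_xg+A_xg\,D_xf$ to split each gradient into a coercive ``diffusive'' piece and a ``convective'' piece carrying $\nabla_h\rho^{n+1}$, invoke the ratio bound \reff{propr:eq17-2} to extract $\tfrac{3\gamma}{8}\|\nabla_h(\td\rho^{n+1}+\td\rho^n)\|_2^2$, and absorb everything else by Cauchy--Schwarz. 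The one substantive technical difference is where the logarithm is linearized: you expand $\ln\crho^{n+1}-\ln\rho^{n+1}$ pointwise about $\rho^{n+1}$, which leaves a remainder quadratic in $\td\rho^{n+1}$; after applying $D_x$ this produces terms of the form $A_x\td\rho^{n+1}\cdot D_x\td\rho^{n+1}$, which are controllable only by combining the rough $\ell^\infty$ smallness of $\td\rho^{n+1}$ (from \reff{propr:eq13}, which is how \reff{propr:eq14} was obtained but is not literally listed among the hypotheses \reff{propr:eq14}--\reff{propr:eq17-2}) with an inverse inequality. The paper instead expands the difference quotient $\frac{\ln b-\ln a}{b-a}$ symmetrically about the cell midpoint $\frac{a+b}{2}$ (equations \reff{Taylor-1}--\reff{nonlinear est-1-4}), so that every remainder is either \emph{linear} in the error functions or a pure $O(h^4)$ constant (the source of $M_1h^8$, which in the paper has nothing to do with the $\dt^8$ regularization in $\hat\rho^{n+\frac12}$, contrary to your attribution); this keeps the argument free of quadratic error products. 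Two of your worries are also misplaced in a harmless direction: the terms $J_4$-type prefactors are not uniformly $O(\dt)$ after differencing (e.g.\ $D_x(\rho^{n+1}-\rho^n)$ is only $O(1)$), but they remain linear in the errors and are still absorbed by Cauchy--Schwarz; and no $\|\td\rho^{n-1}\|_2$ dependence can arise here at all, since $\hat\rho^{n+\frac12}$ enters only through its pointwise bounds. None of these affect the validity of the argument.
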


Now we look at the refined error estimate. Taking a discrete inner product with \reff{ErrFun:eq0-1} by $(\td\rho^{n+1} + \td \rho^n)$ gives
\begin{equation}\label{Lemd:eq1}
\begin{aligned}
&\frac{1}{\dt} (\|\td\rho^{n+1} \|^2_2-\|\td\rho^{n} \|^2_2 )+ \ciptwo{\hat{\rho}^{n+\frac{1}{2}}\nabla_h \td v^{n+\frac{1}{2}} }{\nabla_h ( \td\rho^{n+1} + \td \rho^n ) } \\
&\qquad=- \ciptwo{\td{\hat{\rho}}^{n+\frac{1}{2}}\nabla_h \calV^{n+\frac{1}{2}}}{\nabla_h ( \td\rho^{n+1} + \td \rho^n ) } + \langle \tau^{n+\frac{1}{2}}_\rho , \td\rho^{n+1} + \td \rho^n \rangle .
\end{aligned}
\end{equation}
The first term on the right hand side could be analyzed in a standard way:
\begin{equation}\label{Lemd:eq2}
\begin{aligned}
- \ciptwo{\td{\hat{\rho}}^{n+\frac{1}{2}}\nabla_h \calV^{n+\frac{1}{2}}}{\nabla_h ( \td\rho^{n+1} + \td \rho^n) } \le & \|\nabla_h \calV^{n+\frac{1}{2}} \|_\infty\cdot\|\td{\hat{\rho}}^{n+\frac{1}{2}} \|_2 \cdot \|\nabla_h ( \td\rho^{n+1} + \td \rho^n ) \|_2 \\
\le &  C^*( \frac32 \|\td{\rho}^n \|_2 + \frac12 \|\td{\rho}^{n-1} \|_2 + \dt^4)
 \|\nabla_h ( \td\rho^{n+1} + \td \rho^n ) \|_2 \\
\le &  \frac{( C^*)^2}{\gamma} ( 24 \|\td{\rho}^n \|_2^2 + 8\gamma  \|\td{\rho}^{n-1} \|_2^2 + 8 \dt^8 )
\\
  &
 + \frac{\gamma}{16} \|\nabla_h ( \td\rho^{n+1} + \td \rho^n ) \|_2^2.
\end{aligned}
\end{equation}
The Cauchy inequality is applied to bound the local truncation error term:
\begin{equation}\label{Lemd:eq3}
  \langle \tau^{n+\frac{1}{2}}_\rho , \td \rho^{n+1} + \td \rho^n \rangle
  \le \| \tau^{n+\frac{1}{2}}_\rho \| \cdot \| \td \rho^{n+1} + \td \rho^n \|_2
  \le \frac12 \|\tau^{n+\frac{1}{2}}_\rho \|^2_2
  + ( \|\td\rho^{n+1} \|^2_2 + \|\td\rho^n \|^2_2 ) .
\end{equation}
For the nonlinear term on the left hand side, we separate it into three parts:
\begin{equation}\label{Lemd:eq4}
\begin{aligned}
&\ciptwo{\hat{\rho}^{n+\frac{1}{2}}\nabla_h \td v^{n+\frac{1}{2}} }{\nabla_h ( \td\rho^{n+1} + \td \rho^n ) }
\\
&\quad=\gamma \ciptwo{\hat{\rho}^{n+\frac{1}{2}} \nabla_h \td S^{n+\frac{1}{2}} }{\nabla_h ( \td\rho^{n+1} + \td \rho^n ) } - \frac{\chi}{2} \ciptwo{\hat{\rho}^{n+\frac{1}{2}} \nabla_h ( \td \phi^{n+1} + \td \phi^n ) }{\nabla_h ( \td \rho^{n+1} + \td \rho^n ) } \\
&\qquad+ \frac{\chi^2 \dt}{4 \theta} \ciptwo{\hat{\rho}^{n+\frac{1}{2}} \nabla_h ( \td \rho^{n+1} - \td \rho^n ) }{\nabla_h ( \td \rho^{n+1} + \td \rho^n ) } .
\end{aligned}
\end{equation}
The second part could be controlled by a direct application of the Cauchy inequality:
\begin{equation}
\begin{aligned}
  &
  \frac{\chi}{2} \ciptwo{\hat{\rho}^{n+\frac{1}{2}} \nabla_h ( \td \phi^{n+1} + \td \phi^n ) }{\nabla_h ( \td \rho^{n+1} + \td \rho^n ) }
\\
  &\qquad \le \frac{\chi}{2} \| \hat{\rho}^{n+\frac{1}{2}} \|_\infty
  \cdot \| \nabla_h ( \td \phi^{n+1} + \td \phi^n ) \|_2
  \cdot \| \nabla_h ( \td \rho^{n+1} + \td \rho^n ) \|_2
\\
  &\qquad \le  \frac{\chi \ckC_0}{2}
  \cdot \| \nabla_h ( \td \phi^{n+1} + \td \phi^n ) \|_2
  \cdot \| \nabla_h ( \td \rho^{n+1} + \td \rho^n ) \|_2
\\
  &\qquad \le  \frac{2 \chi^2 \ckC_0^2}{\gamma} ( \| \nabla_h \td \phi^{n+1} \|_2^2 + \| \nabla_h \td \phi^n \|_2^2 )
  + \frac{\gamma}{16} \| \nabla_h ( \td \rho^{n+1} + \td \rho^n ) \|_2^2 .
\end{aligned}
  \label{Lemd:eq5}
\end{equation}
In terms of the third part on the right hand side of~\eqref{Lemd:eq4}, we begin with a point-wise vector identity: $\nabla_h ( \td \rho^{n+1} - \td \rho^n ) \cdot \nabla_h ( \td \rho^{n+1} + \td \rho^n )
= | \nabla_h \td \rho^{n+1} |^2 - | \nabla_h \td \rho^n |^2$. This in turn leads to
\begin{equation}
\begin{aligned}
  &
  \ciptwo{\hat{\rho}^{n+\frac{1}{2}} \nabla_h ( \td \rho^{n+1} - \td \rho^n ) }{\nabla_h ( \td \rho^{n+1} + \td \rho^n ) }
\\
&\qquad=
  \ciptwo{\hat{\rho}^{n+\frac{1}{2}} }{| \nabla_h \td \rho^{n+1} |^2 - | \nabla_h \td \rho^n |^2}
\\
&\qquad =
    \ciptwo{\hat{\rho}^{n+\frac{1}{2}} }{| \nabla_h \td \rho^{n+1} |^2}
  - \ciptwo{\hat{\rho}^{n-\frac12} }{ | \nabla_h \td \rho^n |^2}
  - \ciptwo{\hat{\rho}^{n+\frac12} - \hat{\rho}^{n-\frac12} }{ | \nabla_h \td \rho^n |^2} .
\end{aligned}
  \label{Lemd:eq6-1}
\end{equation}
Moreover, for the last term in the rewritten expansion, an application of the preliminary estimate~\eqref{lemim} implies that
\begin{equation}
\begin{aligned}
  &
  \ciptwo{\hat{\rho}^{n+\frac12} - \hat{\rho}^{n-\frac12} }{ | \nabla_h \td \rho^n |^2}
  \le  \| \hat{\rho}^{n+\frac12} - \hat{\rho}^{n-\frac12} \|_\infty \cdot \| \nabla_h \td \rho^n \|_2^2
  \le 2 \td C_0 \dt \| \nabla_h \td \rho^n \|_2^2 .
\end{aligned}
  \label{Lemd:eq6-2}
\end{equation}
Subsequently, a combination of~\eqref{Lemd:eq6-1} and \eqref{Lemd:eq6-2} yields
\begin{equation}
\begin{aligned}
  &
  \frac{\chi^2 \dt}{4 \theta} \ciptwo{\hat{\rho}^{n+\frac{1}{2}} \nabla_h ( \td \rho^{n+1} - \td \rho^n ) }{\nabla_h ( \td \rho^{n+1} + \td \rho^n ) }
\\
 & \qquad   \ge  
    \frac14 \chi^2 \theta^{-1} \dt \Big( \ciptwo{\hat{\rho}^{n+\frac{1}{2}} }{| \nabla_h \td \rho^{n+1} |^2}
  - \ciptwo{\hat{\rho}^{n-\frac12} }{ | \nabla_h \td \rho^n |^2} \Big)
  - \frac12 \chi^2 \td C_0 \theta^{-1} \dt^2 \| \nabla_h \td \rho^n \|_2^2 .
\end{aligned}
  \label{Lemd:eq6-3}
\end{equation}
Meanwhile, the nonlinear diffusion error estimate~\eqref{Lemn:eq0} is valid, as stated in Proposition~\ref{prop:CA2}. A substitution of \eqref{Lemn:eq0}, \eqref{Lemd:eq5}, and \eqref{Lemd:eq6-3} into \eqref{Lemd:eq4} results in
\begin{equation}
\begin{aligned}
&\ciptwo{\hat{\rho}^{n+\frac{1}{2}}\nabla_h \td v^{n+\frac{1}{2}} }{\nabla_h ( \td\rho^{n+1} + \td \rho^n ) }
\\
 & \quad \ge  
    \frac14 \chi^2 \theta^{-1} \dt \Big( \ciptwo{\hat{\rho}^{n+\frac{1}{2}} }{| \nabla_h \td \rho^{n+1} |^2}
    - \ciptwo{\hat{\rho}^{n-\frac12} }{ | \nabla_h \td \rho^n |^2} \Big)
    + \frac{3\gamma }{16} \| \nabla_h ( \td \rho^{n+1} + \td \rho^n ) \|_2^2 - M_1 h^8
\\
& \qquad - \tdC_1 (\|\td \rho^{n+1} \|^2_2+\|\td \rho^{n} \|^2_2 )
    - \frac{2 \chi^2 \ckC_0^2}{\gamma } ( \| \nabla_h \td \phi^{n+1} \|_2^2 + \| \nabla_h \td \phi^n \|_2^2 )
    - \frac12 \chi^2 \td C_0 \theta^{-1} \dt^2 \| \nabla_h \td \rho^n \|_2^2 .
\end{aligned}
  \label{Lemd:eq7}
\end{equation}
Its combination with~\eqref{Lemd:eq1}-\eqref{Lemd:eq3} reveals that
\begin{equation}
\begin{aligned}
&\frac{1}{\dt} (\|\td\rho^{n+1} \|^2_2-\|\td\rho^{n} \|^2_2 )
  + \frac14 \chi^2 \theta^{-1} \dt ( \langle \hat{\rho}^{n+\frac{1}{2}} , | \nabla_h \td \rho^{n+1} |^2 \rangle
    - \langle \hat{\rho}^{n-\frac12} , | \nabla_h \td \rho^n |^2 \rangle ) + \frac{\gamma }{8} \| \nabla_h ( \td \rho^{n+1} + \td \rho^n ) \|_2^2
\\
  &~\le
     ( \tdC_1 + 1 ) \|\td \rho^{n+1} \|^2_2 + ( \tdC_1 + \frac12 \chi^2 \td C_0 \theta^{-1} \hat{C}_3^2
     + 24 \frac{(C^*)^2}{\gamma } + 1 ) \|\td \rho^{n} \|^2_2
     + 8\frac{(C^*)^2}{\gamma } \|\td \rho^{n-1} \|^2_2
\\
  &\qquad 
    + \frac{2 \chi^2 \ckC_0^2}{\gamma } ( \| \nabla_h \td \phi^{n+1} \|_2^2 + \| \nabla_h \td \phi^n \|_2^2 )
   + \frac12 \|\tau^{n+\frac{1}{2}}_\rho \|^2_2  + \frac{8 (C^*)^2}{\gamma } \dt^8 +  M_1 h^8,
\end{aligned}
  \label{Lemd:eq8}
\end{equation}
in which an inverse inequality and $\dt \| \nabla_h \td \rho^n \|_2 \le \hat{C}_3 \| \td \rho^n \|_2$ (under the linear refinement requirement  $\lambda_1 h \le \dt \le \lambda_2 h$) has been used.

The analysis for the numerical error evolutionary equation~\eqref{ErrFun:eq0-2} takes a much simpler form, because of its linear nature. Taking a discrete inner product with \reff{ErrFun:eq0-2} by $(\td\phi^{n+1} + \td \phi^n)$ gives
\begin{equation}\label{Lemd:eq9}
\begin{aligned}
&\frac{\theta}{\dt} (\|\td \phi^{n+1} \|^2_2-\|\td \phi^{n} \|^2_2 ) + \frac{\mu}{2}  \| \nabla_h ( \td \phi^{n+1} + \td \phi^n ) \|_2^2 + \frac{\alpha}{2}  \| \td \phi^{n+1} + \td \phi^n \|_2^2
\\
&\quad =  \frac{\chi}{2}  \langle \td \rho^{n+1} + \td \rho^n , \td \phi^{n+1} + \td \phi^n \rangle
 + \langle \tau^{n+\frac{1}{2}}_\phi , \td \phi^{n+1} + \td \phi^n \rangle
\\
  & \quad   \le 
    \frac{\chi}{2} ( \| \td \rho^{n+1} \|_2^2 + \| \td \rho^n \|_2^2 )
  + (  \frac{\chi}{2} + 1) ( \| \td \phi^{n+1} \|_2^2 + \| \td \phi^n \|_2^2 )
  + \frac12 \|\tau^{n+\frac{1}{2}}_\phi \|^2_2 ,
\end{aligned}
\end{equation}
in which the Cauchy inequality has been repeatedly applied. Meanwhile, we need a further $H^1$ error estimate for the density variable, to balance the terms $\| \nabla_h \td \phi^{n+1} \|_2^2$ and $\| \nabla_h \td \phi^n \|_2^2$ on the right hand side of~\eqref{Lemd:eq8}. Taking a discrete inner product with \reff{ErrFun:eq0-2} by $- \Delta_h (\td\phi^{n+1} + \td \phi^n)$ indicates
\begin{equation}
\begin{aligned}
&\frac{\theta}{\dt} (\| \nabla_h \td \phi^{n+1} \|^2_2 - \| \nabla_h \td \phi^{n} \|^2_2 ) + \frac{\mu}{2}  \| \Delta_h ( \td \phi^{n+1} + \td \phi^n ) \|_2^2
 + \frac{\alpha}{2}  \| \nabla_h ( \td \phi^{n+1} + \td \phi^n ) \|_2^2
\\
& \quad =  - \frac{\chi}{2}  \langle \td \rho^{n+1} + \td \rho^n , \Delta_h ( \td \phi^{n+1} + \td \phi^n ) \rangle
 - \langle \tau^{n+\frac{1}{2}}_\phi , \Delta_h ( \td \phi^{n+1} + \td \phi^n ) \rangle
\\
& \quad \le 
  \chi^2 \mu^{-1} ( \| \td \rho^{n+1} \|_2^2 + \| \td \rho^n \|_2^2 )
  + 2 \mu^{-1} \|\tau^{n+\frac{1}{2}}_\phi \|^2_2
  +  \frac{\mu}{4}  \| \Delta_h  ( \td \phi^{n+1} + \td \phi^n ) \|_2^2  .
\end{aligned}
  \label{Lemd:eq10-1}
\end{equation}
This is equivalent to
\begin{equation}
\begin{aligned}
&\frac{\theta}{\dt} (\| \nabla_h \td \phi^{n+1} \|^2_2 - \| \nabla_h \td \phi^{n} \|^2_2 )
+ \frac{\mu}{4}  \| \Delta_h ( \td \phi^{n+1} + \td \phi^n ) \|_2^2
 + \frac{\alpha}{2}  \| \nabla_h ( \td \phi^{n+1} + \td \phi^n ) \|_2^2
\\
  & \qquad \le  
  \chi^2 \mu^{-1} ( \| \td \rho^{n+1} \|_2^2 + \| \td \rho^n \|_2^2 )
  + 2 \mu^{-1} \|\tau^{n+\frac{1}{2}}_\phi \|^2_2   .
\end{aligned}
  \label{Lemd:eq10-2}
\end{equation}
Therefore, a combination of~\eqref{Lemd:eq8}, \eqref{Lemd:eq9} and \eqref{Lemd:eq10-2} leads to
\begin{equation}
\begin{aligned}
&\frac{1}{\dt} \Big( \|\td\rho^{n+1} \|^2_2 - \|\td\rho^{n} \|^2_2
+ \theta ( \|\td \phi^{n+1} \|^2_2 - \|\td \phi^n \|^2_2
+ \| \nabla_h \td \phi^{n+1} \|^2_2 - \| \nabla_h \td \phi^n \|^2_2 )  \Big)
\\
  &
  \quad+ \frac14 \chi^2 \theta^{-1} \dt ( \langle \hat{\rho}^{n+\frac{1}{2}} , | \nabla_h \td \rho^{n+1} |^2 \rangle
    - \langle \hat{\rho}^{n-\frac12} , | \nabla_h \td \rho^n |^2 \rangle )
\\
  &
    \quad+ \frac{\gamma }{8} \| \nabla_h ( \td \rho^{n+1} + \td \rho^n ) \|_2^2
    + \frac{\mu}{4}  \| \Delta_h ( \td \phi^{n+1} + \td \phi^n ) \|_2^2
\\
 & \le 
     ( \tdC_1 + 1 + \frac{\chi}{2} + \chi^2 \mu^{-1} ) \|\td \rho^{n+1} \|^2_2
     + ( \tdC_1 + \frac12 \chi^2 \td C_0 \theta^{-1} \hat{C}_3^2
     + 24 \frac{(C^*)^2}{\gamma } + 1 + \frac{\chi}{2} + \chi^2 \mu^{-1} ) \|\td \rho^{n} \|^2_2
\\
  &
     \quad+ 8 \frac{(C^*)^2}{\gamma } \|\td \rho^{n-1} \|^2_2
     + (  \frac{\chi}{2} + 1) ( \| \td \phi^{n+1} \|_2^2 + \| \td \phi^n \|_2^2 )
    + \frac{2 \chi^2 \ckC_0^2}{\gamma } ( \| \nabla_h \td \phi^{n+1} \|_2^2 + \| \nabla_h \td \phi^n \|_2^2 )
\\
  &
 \quad+ \frac12 \|\tau^{n+\frac{1}{2}}_\phi \|^2_2 
   + \frac12 \|\tau^{n+\frac{1}{2}}_\rho \|^2_2
   + 2 \mu^{-1} \|\tau^{n+\frac{1}{2}}_\phi \|^2_2  + 8 \frac{(C^*)^2}{\gamma } \dt^8 + M_1 h^8  .
\end{aligned}
  \label{Lemd:eq11-1}
\end{equation}
In turn, the following quantity is introduced:
\begin{equation}
  {\cal F}^{n+1} := \|\td\rho^{n+1} \|^2_2
+ \theta ( \|\td \phi^{n+1} \|^2_2  + \| \nabla_h \td \phi^{n+1} \|^2_2  )
  + \frac14 \chi^2 \theta^{-1} \dt^2
  \langle \hat{\rho}^{n+\frac{1}{2}} , | \nabla_h \td \rho^{n+1} |^2 \rangle  .
 \label{Lemd:eq11-2}
\end{equation}
Then we obtain
\begin{equation}
\begin{aligned}
&\frac{1}{\dt} ( {\cal F}^{n+1} - {\cal F}^n )
    + \frac{\gamma }{8} \| \nabla_h ( \td \rho^{n+1} + \td \rho^n ) \|_2^2
    + \frac{\mu}{4}  \| \Delta_h ( \td \phi^{n+1} + \td \phi^n ) \|_2^2
\\
  & \quad \le 
  \td C_2 ( {\cal F}^{n+1} + {\cal F}^n + {\cal F}^{n-1} )
   + \frac12 \|\tau^{n+\frac{1}{2}}_\rho \|^2_2
   + 2 \mu^{-1} \|\tau^{n+\frac{1}{2}}_\phi \|^2_2 +\frac12 \|\tau^{n+\frac{1}{2}}_\phi \|^2_2 
\\
  &
   \qquad + 8 \frac{(C^*)^2}{\gamma } \dt^8 + M_1 h^8,
\end{aligned}
\label{Lemd:eq11-3}
\end{equation}
where
\[
  \td C_2 =
  \max \Big( \tdC_1 + \frac12 \chi^2 \td C_0 \theta^{-1} \hat{C}_3^2
     + 24 \frac{(C^*)^2}{\gamma } + 1 + \frac{\chi}{2} + \chi^2 \mu^{-1}  ,
     (  \frac{\chi}{2} + 1) \theta^{-1} ,  \frac{ 2\chi^2 \ckC_0^2 \theta^{-1}}{\gamma } \Big).
\]
Consequently, an application of the discrete Gronwall inequality gives the desired higher order convergence estimate:
\begin{equation}\label{Lemd:eq12}
\begin{aligned}
  &
  {\cal F}^{n+1} \le C (\dt^8 + h^8) , \quad
  \| \td \rho^{n+1} \|_2 +  \| \td \phi^{n+1} \|_2 + \| \nabla_h \td \phi^{n+1} \|_2
  \le C ( {\cal F}^{n+1} )^\frac12 \le C (\dt^4+h^4) ,
\\
  &
  \dt \sum_{k=0}^n \| \nabla_h ( \td \rho^{k+1} + \td \rho^k ) \|_2^2
    + \dt \sum_{k=0}^n  \| \Delta_h ( \td \phi^{k+1} + \td \phi^k ) \|_2^2   \le C ( \dt^8 + h^8 ) ,
\end{aligned}
\end{equation}
in which the fourth order truncation error accuracy $\|\tau^{n+\frac{1}{2}}_\rho \|_2 , \,
\|\tau^{n+\frac{1}{2}}_\phi \|_2 \leq C(\dt^4+h^4)$ has been applied. This finishes the refined error estimate.

\subsection{Recovery of the a-priori assumption \reff{A-priAssum:eq1}}

With the help of the higher order error estimate, we see that the a-priori assumption \reff{A-priAssum:eq1} is satisfied at $t_{n+1}$:
\begin{equation}
 \|\td\rho^{n+1} \|_2 , \, \|\td\phi^{n+1} \|_2
 \leq C(\dt^4+h^4)\leq \dt^{\frac{15}{4}}+h^{\frac{15}{4}},
\end{equation}
provided that $\dt$ and $h$ are sufficiently small. Therefore, an induction analysis could be effectively applied and  the higher order convergence analysis is complete. Subsequently, a combination of \reff{Lemd:eq12} with \reff{Consis:eq0} leads to the convergence estimate \reff{th:eq1}. The proof of Theorem \ref{Th:convergence} is completed.

\section{Numerical results}
\subsection{Accuracy test}
We now test numerical accuracy of the proposed scheme (\ref{PKS2nd_1}-\ref{PKS2nd_2}) in solving the PKS system 
\begin{equation}\label{eq:ex}
\left\{
\begin{aligned}
\partial_t \rho&=\Delta \rho-\nabla\cdot(\rho\nabla\phi)+f_1,\\
\theta\partial_t\phi&=\Delta\phi-\phi+\rho+f_2,
\end{aligned}
\right.
\end{equation}
on a two-dimensional computational domain $\Omega=(0,1)^2$. The source terms $f_1$ and $f_2$ are determined by the following exact solution
\begin{equation}\label{ExS}
\left\{
\begin{aligned}
&\rho_e (x,y,t) =0.1 {\rm e}^{-t}\cos(\pi x)\cos(\pi y)+0.2,\\
&\phi_e (x,y,t) =0.1 {\rm e}^{-t}\cos(\pi x)\cos(\pi y)+0.2.\\
\end{aligned}
\right.
\end{equation}
The initial conditions are obtained by evaluating the exact solution at $t=0$. We consider homogeneous Neumann boundary condition \reff{BCs} for both $\rho$ and $\phi$.
\begin{figure}[h]
\centering
\includegraphics[scale=.55]{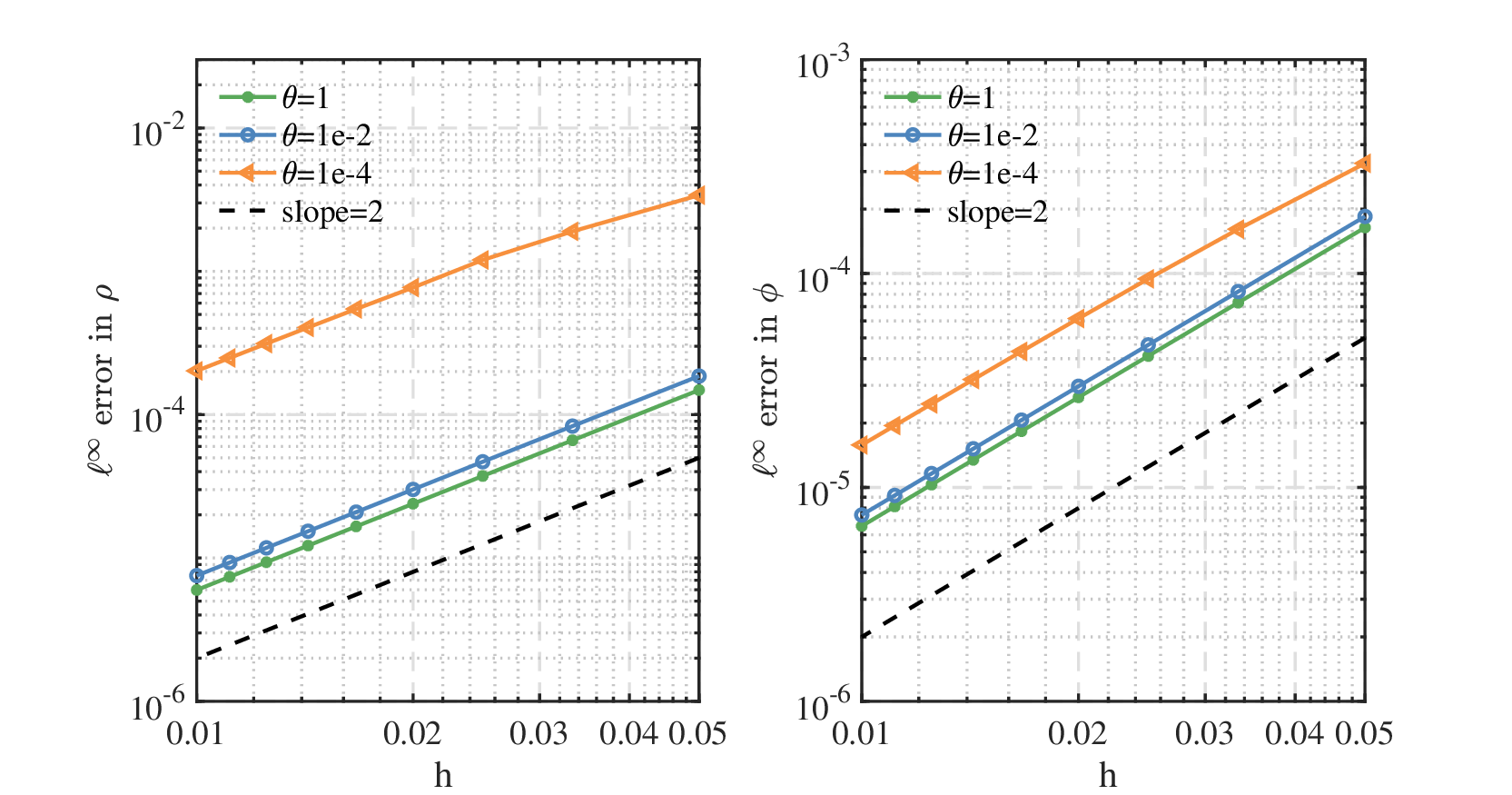}
\caption{Numerical errors (in $\ell^\infty$) of $\rho$ and $\phi$ computed by the second-order accurate scheme (\ref{PKS2nd_1}-\ref{PKS2nd_2}) at a final time $T=0.1$, with a mesh  ratio $\Delta t=h/10$. Various values of $\theta$ are considered: $\theta=1$, $\theta=1e-2$, and $\theta=1e-4$.}
\label{f:Order}
\end{figure}
We first test  numerical accuracy of the proposed scheme utilizing various spatial step size $h$ with a fixed mesh ratio $\Delta t=h/10$. Figure~\ref{f:Order} displays the $\ell^\infty$ errors and convergence orders for the density of living organisms and chemical signals at a final time $T=0.1$.  We observe that the numerical error decreases as the mesh refines and that  second order convergence rates are clearly observed for both $\rho$ and $\phi$. This verifies the second-order accuracy for the proposed numerical scheme~(\ref{PKS2nd_1}-\ref{PKS2nd_2}), in both temporal and spatial discretization. Notice that the mesh ratio is chosen for the sake of numerical accuracy test, not for stability or positivity.

\subsection{Performance test}
\subsubsection{Symmetric initial data on a square}\label{ss:sym}
In this case, we demonstrate the performance of the proposed scheme in preserving mass conservation, energy dissipation, and solution positivity in a two-dimensional domain $\Omega=(0,1)^2$. The parameters are taken as: $\gamma=1$, $\chi=1$, $\theta=1$, $\mu=1$, and $\alpha=1$. The homogeneous boundary conditions are imposed for both $\rho$ and $\phi$. 
The initial data are prescribed as follows:
\[
\left\{
\begin{aligned}
\rho^0 (x,y) &=1000{\rm e}^{-100\left[(x-\frac{1}{2})^2+(y-\frac{1}{2})^2\right]},\\
\phi^0 (x,y) &={\rm e}^{-100\left[(x-\frac{1}{2})^2+(y-\frac{1}{2})^2\right]},\\
\end{aligned}
\right.
\]
which mimics concentrated living organisms and chemical signals of high peak values initially distribute at the center of the domain.  
\begin{figure}[h]
\centering
\includegraphics[scale=.5]{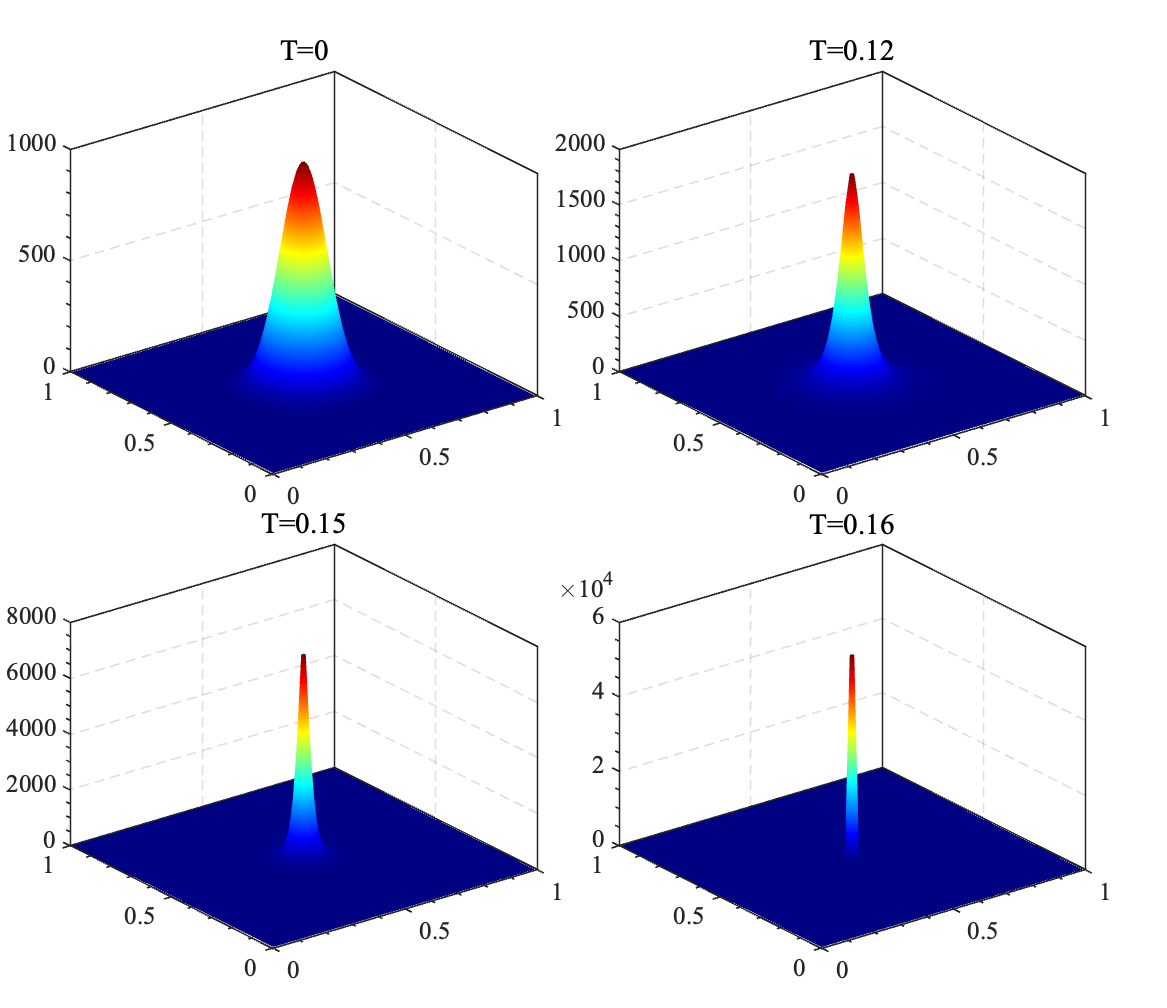}
\caption{Evolution of density $\rho$ at time instants: $T=0$, $0.12$, $0.15$, and $0.16$, with $\Delta t=10^{-5}$ and $h=10^{-2}$.}
\label{f:bu1}
\end{figure}
According to the mathematical analysis in~\cite{HerreroVelazquez_ASNS1997}, the analytic solution to the classical PKS system is expected to develop a blowup at the center of the domain in finite time, with initial mass $\| \rho^0\|_1\approx 31.4159>8\pi$. Indeed, Figure~\ref{f:bu1} displays numerical results on singularity formation at a sequence of time instants: $T=0$, $0.12$, $0.15$, and $0.16$, with a time step size $\Delta t=10^{-5}$. It is observed that the numerical solution of the living organism density evolves into blowup at the center of the domain,  leading to a dwindling support for the organism density.

\begin{figure}[h]
\centering
\includegraphics[scale=.5]{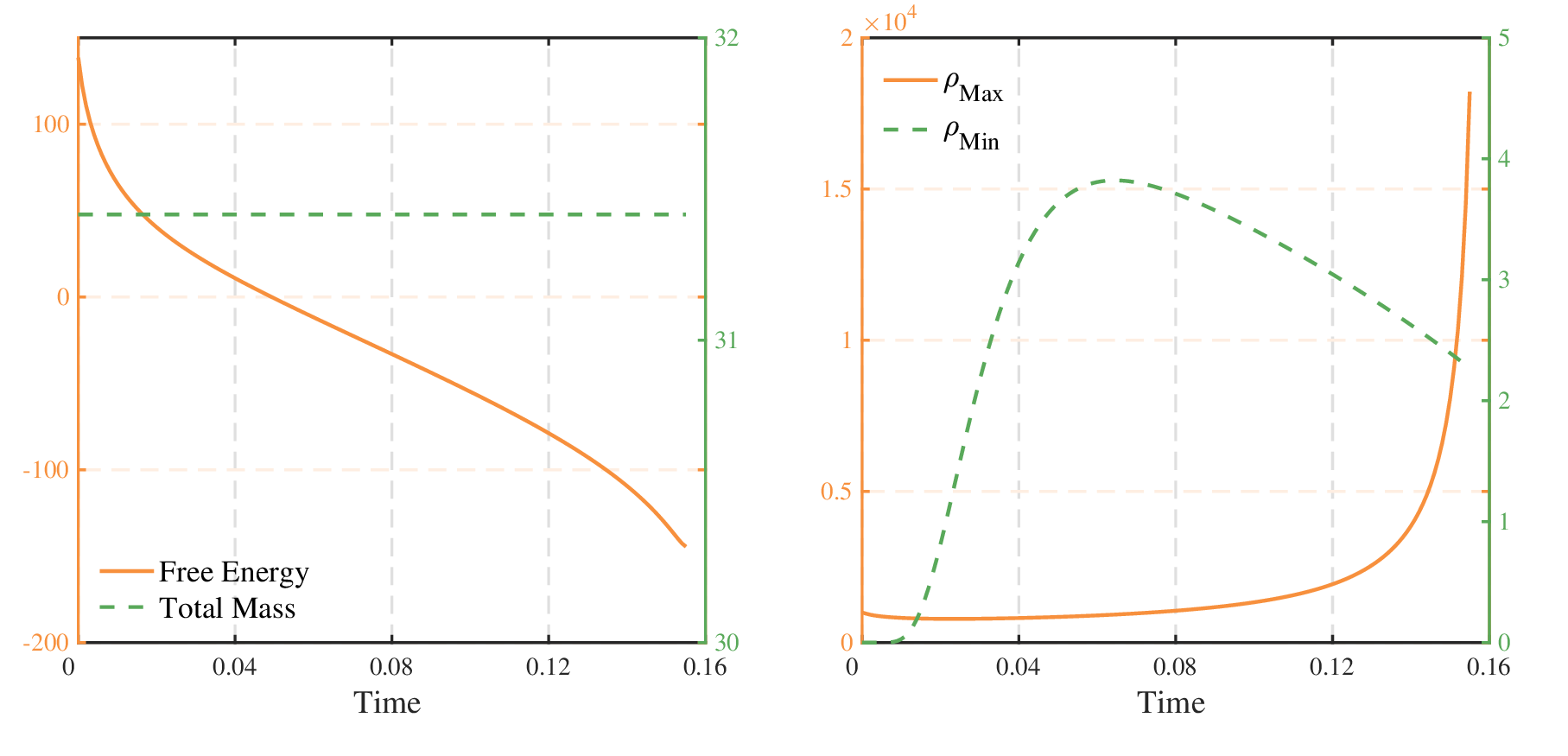}
\caption{Time evolution of the discrete energy $F_h$, mass of $\rho$, and the maximum and minimum values of $\rho$ over the computational mesh, with $\Delta t=10^{-5}$.}
\label{f:MassEnergyKS1}
\end{figure}
We next demonstrate the robustness of the proposed numerical scheme in preserving the desired properties in such blowup evolution, which poses a challenging task on numerical stability. With homogeneous Neumann boundary conditions, the physical system possesses mass conservation and free-energy dissipation. From the left panel of Figure~\ref{f:MassEnergyKS1},  one can see that the free energy~\reff{EnergyKS1} monotonically decreases and the total mass of $\rho$ remains constant robustly as time evolves. The right panel of Figure~\ref{f:MassEnergyKS1} displays the time evolution of $\rho_{\rm Min}:={\rm Min}_{i,j}~\rho_{i,j}$ and $\rho_{\rm Max}:={\rm Max}_{i,j}~\rho_{i,j}$, the minimum and maximum values of $\rho$ over the computational mesh, respectively. It is observed that the proposed numerical scheme is positivity-preserving and the maximum value of $\rho$ grows exponentially as time evolves in the singularity formation.

\subsubsection{Asymmetric initial data on a square}
The proposed second-order accurate scheme (\ref{PKS2nd_1}-\ref{PKS2nd_2}) is applied to probe blowup formation away from the peak position of initial densities. We consider the same IBVPs as in Section~\ref{ss:sym}, while the center of initial data is shifted to $(0.75,0.75)$:
\[
\left\{
\begin{aligned}
\rho^0 (x,y) &=1000 {\rm e}^{-100\left((x-0.75)^2+(y-0.75)^2\right)},\\
\phi^0 (x,y) &={\rm e}^{-100\left((x-0.75)^2+(y-0.75)^2\right)}.\\
\end{aligned}
\right.
\]
Again, the solution of $\rho$ may blow up in a finite time due to the fact that $\| \rho^0\|_1\approx 31.4033>8\pi$~\cite{HorstmannWang_2001EJAM}.

\begin{figure}[h]
\centering
\includegraphics[scale=.5]{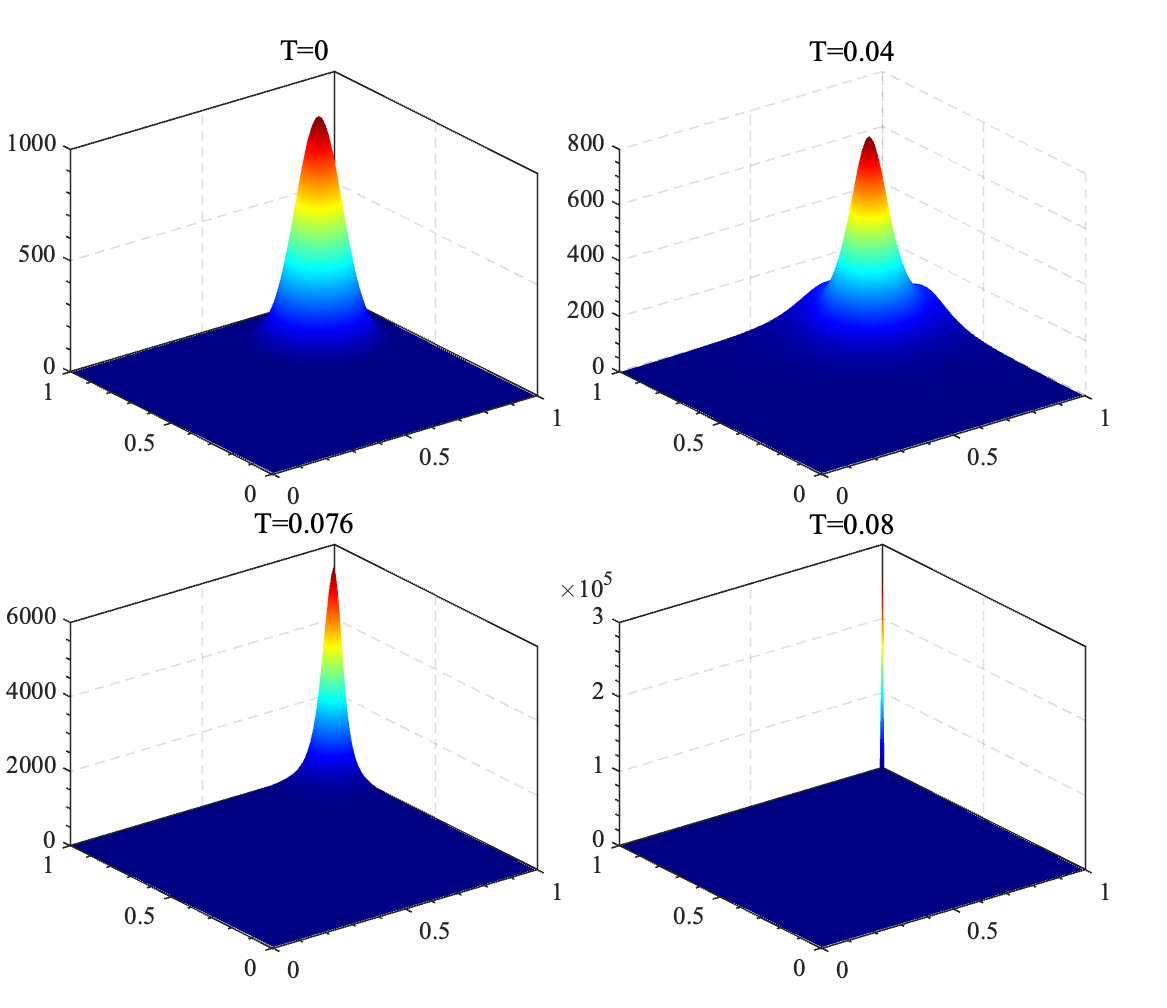}
\caption{Time evolution of $\rho$ at a sequence of time instants: $T=0$, $0.04$, $0.076$, and $0.08$, with $\Delta t=10^{-5}$ and $h=1/100$.}
\label{f:bu2}
\end{figure}
Figure~\ref{f:bu2} displays spatial density profiles of living organisms at four different time instants. It has been proved in \cite{HerreroVelazquez_ASNS1997} that the solution is expected to blow up at the boundary of the domain in this case. It is clearly observed that the behavior of the computed solution matches our expectation: the living organisms first move towards the boundary and then concentrate due to zero-flux boundary conditions, eventually forming solution blow up at the corner.
\begin{figure}[h]
\centering
\includegraphics[scale=.5]{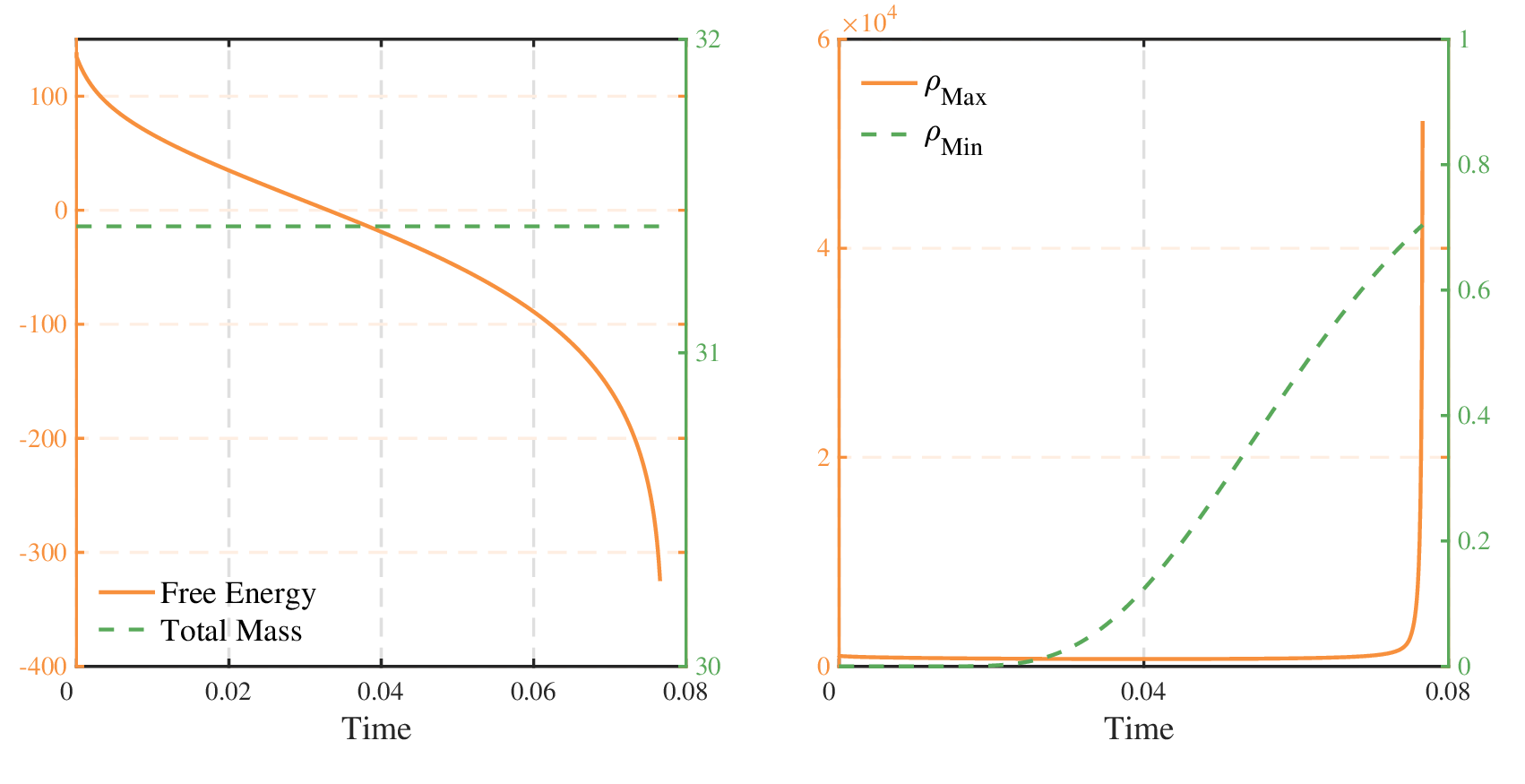}
\caption{Time evolution of the discrete energy $F_h$, mass of $\rho$, and the minimum and maximum values of $\rho$ over the computational mesh, with $\Delta t=10^{-5}$.
}
\label{f:MassEnergyKS2}
\end{figure}
Similar to the previous example, the free energy~\reff{EnergyKS1} depicted in Figure~\ref{f:MassEnergyKS2} gradually decreases over time and the total mass of $\rho$ remains at a constant value perfectly. Regarding the energy functional $F_h$, a significant decline in magnitude occurs before blowup and continuing decrease can be seen during the singularity formation.  Additionally, the density of living organism always remains positive, and the maximum value of density increases exponentially over time, indicating a solution blowup.

\section{Conclusions}
This work has proposed a novel second-order accurate numerical scheme for the PKS system with various mobilities for the description of chemotaxis. The variational structure of the PDE system has been used to facilitate the numerical design. The singular part in the chemical potential is discretized by a modified Crank-Nicolson approach, which leads to a nonlinear and singular numerical system. 
The unique solvability and positivity-preserving property have been theoretically established, in which the convexity of the nonlinear and singular term plays an important role. Moreover, a careful nonlinear analysis has proved a dissipation property of the original free energy functional, instead of a modified energy energy reported in many existing works for a multi-step numerical scheme. This makes the original energy stability analysis remarkable for the second-order discretization. In addition, this study has provided an optimal rate convergence analysis and error estimate for the proposed second order scheme, in which several highly non-standard techniques have been included. With a careful linearization expansion, the higher-order asymptotic expansion (up to fourth order accuracy in both time and space) has been performed. In turn, we are able to derive a rough error estimate, so that the $\ell^\infty$ bound for the density variable, as well as its temporal derivative, becomes available. Subsequently, a refined error estimate has been performed and the desired convergence estimate for $\rho$ is accomplished, in the $\ell^\infty(0,T;l^2)\cap \ell^2(0,T;H^1_h)$ norm. A few numerical results have confirmed the accuracy and robustness of the numerical scheme in preserving desired properties in simulations of chemotaxis.

\appendix
\numberwithin{equation}{section}
\makeatletter
\newcommand{\section@cntformat}{Appendix\thesection:\ }
\makeatother
\section[Appendix]{ Proof of Lemma \ref{Lem:CA1}}\label{Ap:A}
By the fact that $S^{n+\frac12} = \ln \rho^{n+1} + \frac{5 \rho^n}{6 \rho^{n+1}} - \frac{\rho^n}{6 ( \rho^{n+1} )^2 }- \frac23$, we see that the term $\td{S}^{n+\frac12}$ could be decomposed into two parts: $\td{S}^{n+\frac12} = \td{S}^{n+\frac12}_1+\td{S}^{n+\frac12}_2$, where
\begin{equation}\label{ApA:eq1}
\begin{aligned}
\td{S}^{n+\frac12}_1&=\check{S}^{n+\frac12}- \Big(\ln \crho^{n+1} + \frac{5\rho^n}{6\crho^{n+1}}-\frac{(\rho^n)^2}{6(\crho^{n+1})^2} - \frac23 \Big) 
 =\Big( \frac{5}{6\crho^{n+1}}-\frac{\crho^{n}+\rho^n}{6(\crho^{n+1})^2} \Big)\td \rho^n,\\
\td{S}^{n+\frac12}_2&=\ln \crho^{n+1}  + \frac{5\rho^n}{6\crho^{n+1}}-\frac{(\rho^n)^2}{6(\crho^{n+1})^2}- \frac23- S^{n+\frac12} 
= \left[ \frac{1}{\xi^{n+1}}-\frac{5\rho^n}{6(\xi^{n+1})^2}+\frac{(\rho^n)^2}{3(\xi^{n+1})^3}\right]\td \rho^{n+1},\\
\end{aligned}
\end{equation}
and $\xi^{n+1}$ is between $\crho^{n+1}$ and $\rho^{n+1}$, with an application of intermediate value theorem.

Recalling the phase separation property \reff{ConstSols:eq1}, regularity requirement \reff{ConstSols:eq2}, and a-priori $\|\cdot\|_\infty$ estimate  \reff{presolns:eq2} at the previous time step, we have
\[
\bigg|\frac{5}{6\crho^{n+1} }-\frac{\crho^{n} +\rho^n}{6(\crho^{n+1})^2}\bigg|\leq \max\left(\frac{5}{6\crho^{n+1}},~~ \frac{\crho^{n}+\rho^n}{6(\crho^{n+1})^2}\right)\leq \max\left(\frac{5}{6\ve^*},~~\frac{C^*+\ckC_0}{6(\ve^*)^2} \right):=\hat{C}_2 .
\]
Therefore, the following inequality is available:
\begin{equation}\label{ApA:eq2}
\begin{aligned}
\langle \td{S}^{n+\frac12}_1 , \td \rho^{n+1} \rangle =&h^3\sum_{i,j,k=1}^N
\Big( \frac{5}{6\crho^{n+1}_{i,j,k} }-\frac{\crho^{n}_{i,j,k} +\rho^n_{i,j,k} }{6(\crho^{n+1}_{i,j,k})^2} \Big) \td \rho^n_{i,j,k} \cdot \td \rho^{n+1}_{i,j,k}  
\ge 
-\hat{C}_2 h^3\sum_{i,j,k=1}^N|\td \rho^n_{i,j,k}  |\cdot|\td \rho^{n+1}_{i,j,k}  |.
\end{aligned}
\end{equation}

In terms of $\td{S}^{n+\frac12}_2$, we begin with the following observation:
\begin{equation}
\begin{aligned}
\frac{1}{\xi^{n+1} }-\frac{5\rho^n}{6(\xi^{n+1} )^2}+\frac{(\rho^n)^2}{3(\xi^{n+1} )^3}
=&\frac{2(\rho^n -\frac{5}{4}\xi^{n+1}  )^2+\frac{23}{8}(\xi^{n+1} )^2}{6(\xi^{n+1} )^3}
\ge  \frac{23}{48\xi^{n+1} }.
\end{aligned}
\end{equation}
This in turn implies that
\begin{equation}\label{ApA:eq3}
\langle \td{S}^{n+\frac12}_2 , \td \rho^{n+1} \rangle \geq h^3\sum_{i,j,k=1}^N \frac{23}{48\xi^{n+1}_{i,j,k} } |\td \rho^{n+1}_{i,j,k}|^2.
\end{equation}
Its combination with \reff{ApA:eq2} yields
\begin{equation}\label{ApA:eq4}
\begin{aligned}
  &
\langle \td{S}^{n+\frac12} , \td \rho^{n+1} \rangle \geq h^3\sum_{i,j,k=1}^N \Big(\frac{23}{48\xi^{n+1}_{i,j,k} } |\td \rho^{n+1}_{i,j,k}|^2- \hat{C}_2 |\td \rho^n_{i,j,k}  |\cdot|\td \rho^{n+1}_{i,j,k}  | \Big) ,  \quad
\mbox{and}
\\
  &
\langle \gamma \td{S}^{n+\frac12} + \td \psi^n , \td \rho^{n+1} \rangle \geq h^3\sum_{i,j,k=1}^N \Big(\frac{23\gamma }{48\xi^{n+1}_{i,j,k} } |\td \rho^{n+1}_{i,j,k}|^2
- ( \hat{C}_2\gamma  |\td \rho^n_{i,j,k}  | + | \td \psi^n_{i,j,k} | ) |\td \rho^{n+1}_{i,j,k}  | \Big) .
\end{aligned}
\end{equation}

At a fixed grid point $(i,j,k)$ that is not in $\K$, i.e., $0<\rho^{n+1}_{i,j,k}<2C^*+1$, the following estimate is available:
\[
\frac{1}{\xi^{n+1}_{i,j,k} }\geq \min \Big(\frac{1}{\crho^{n+1}_{i,j,k} }, ~\frac{1}{\rho^{n+1}_{i,j,k} }\Big) \geq \frac{1}{2C^*+1}.
\]
Subsequently, the following inequality is valid for $0<\rho^{n+1}_{i,j,k}<2C^*+1$:
\begin{equation}\label{ApA:eq5}
\begin{aligned}
&\frac{23\gamma }{48\xi^{n+1}_{i,j,k} }|\td \rho^{n+1}_{i,j,k} |^2 - (\hat{C}_2\gamma  |\td \rho^n_{i,j,k}| + | \td \psi^n_{i,j,k} | ) |\td \rho^{n+1}_{i,j,k}|\\
&\ge  \frac{23\gamma }{48(2C^*+1)}|\td \rho^{n+1}_{i,j,k}|^2 - \Big( \frac{23\gamma }{96(2C^*+1)}|\td \rho^{n+1}_{i,j,k}|^2+\frac{48 \gamma \hat{C}_2^2(2C^*+1)}{23}|\td \rho^n_{i,j,k}|^2
\\
  &  \qquad \qquad \qquad \qquad
+ \frac{48 (2C^*+1)}{23\gamma }|\td \psi^n_{i,j,k}|^2  \Big) \\
&= \frac{23\gamma }{96(2C^*+1)}|\td \rho^{n+1}_{i,j,k}|^2
- \frac{48\gamma \hat{C}_2^2(2C^*+1)}{23}|\td \rho^n_{i,j,k}|^2 - \frac{48 (2C^*+1)}{23\gamma }|\td \psi^n_{i,j,k}|^2 .
\end{aligned}
\end{equation}

On the other hand, if $(i,j,k)$ is in $\K$, i.e., $\rho^{n+1}_{i,j,k}>2C^*+1$, we have $\td \rho^{n+1}_{i,j,k}=\crho^{n+1}_{i,j,k}-\rho^{n+1}_{i,j,k}<0$, so that the following inequalities are valid:
\[
\begin{aligned}
&\crho^{n+1}_{i,j,k}\leq C^*\leq \frac{C^*}{2C^*+1}(2C^*+1)\leq \frac{C^* \rho^{n+1}_{i,j,k}}{2C^*+1},\\
&|\td \rho^{n+1}_{i,j,k} |=|\crho^{n+1}_{i,j,k}-\rho^{n+1}_{i,j,k}|\geq|\rho^{n+1}_{i,j,k}|- \frac{C^* \rho^{n+1}_{i,j,k}}{2C^*+1}\geq \frac{(C^* +1)}{2C^*+1}\rho^{n+1}_{i,j,k},\\
&\frac{|\td \rho^{n+1}_{i,j,k} |}{\xi^{n+1}_{i,j,k}}\geq\frac{|\td \rho^{n+1}_{i,j,k} |}{\rho^{n+1}_{i,j,k}}\geq \frac{(C^* +1)}{2C^*+1}.
\end{aligned}
\]
Subsequently, the following inequality could be derived:
\begin{equation}\label{ApA:eq6}
\begin{aligned}
 &\frac{23\gamma }{48\xi^{n+1}_{i,j,k} } |\td \rho^{n+1}_{i,j,k} |^2 - ( \hat{C}_2\gamma  |\td \rho^n_{i,j,k}| + | \td \psi^n_{i,j,k} | )  |\td \rho^{n+1}_{i,j,k}| \\
&\geq \frac{23(C^* +1)\gamma }{48(2C^*+1)}|\td \rho^{n+1}_{i,j,k} |
 - ( \gamma \hat{C}_2 \cdot C(\dt^{\frac{9}{4}}+h^{\frac{9}{4}}) + h) |\td \rho^{n+1}_{i,j,k} | \\
&\geq \frac{\gamma }{6} |\td \rho^{n+1}_{i,j,k} |\geq \frac{\gamma }{6}(C^*+1)\geq \frac{C^*\gamma }{6} ,
\end{aligned}
\end{equation}
where the $\|\cdot\|_{\infty}$ estimate \reff{presolns:eq1} and the assumption that $\| \td \psi^n \|_\infty \le h$ have been recalled in the second step, and the fact that $\frac{23(C^* +1)\gamma }{48(2C^*+1)}-( \gamma \hat{C}_2 \cdot C(\dt^{\frac{9}{4}}+h^{\frac{9}{4}}) + h) \geq \frac{\gamma }{6}$ has been used in the last step.

As a result, a substitution of the point-wise inequalities \reff{ApA:eq5} and \reff{ApA:eq6} into \reff{ApA:eq4} results in the desired estimate \reff{Lemr:eq1}, by taking $\ckC_2=\frac{48 \hat{C}_2^2(2C^*+1)}{23\gamma }$. Moreover, if $| \K |=0$, the improved nonlinear estimate in \reff{Lemr:eq2} could be derived, based on \eqref{ApA:eq6}, by taking $\ckC_3 = \frac{23\gamma }{96(2C^*+1)}$. Therefore, the proof of Lemma \ref{Lem:CA1} is completed.

\section{Proof of Proposition~\ref{prop:CA2}}\label{Ap:B} 
By the numerical error expansion formula~\eqref{ErrFun:eq1}, the following decomposition is available for $\td{S}^{n+\frac12}$, at a point-wise level:
\begin{equation}
\begin{aligned}
  &
 \td{S}^{n+\frac12} =  J_1 + J_2 + J_3 + J_4 + J_5 ,  \quad  \mbox{with}
\\
  &
  J_1 = \ln (\crho^{n+1}) - \ln (\rho^{n+1}) , \, \, \,
  J_2 = - \frac{1}{2 \rho^{n+1}} ( \tilde{\rho}^{n+1}- \tilde{\rho}^n) , \, \, \,
  J_3 =  \frac{\tilde{\rho}^{n+1}}{2 \crho^{n+1} \rho^{n+1}} (\crho^{n+1} - \crho^n) ,
\\
   &
   J_4 = - \frac{\crho^{n+1} - \crho^n + \rho^{n+1}-\rho^n}{6 ( \rho^{n+1})^2 }
   (\tilde{\rho}^{n+1} - \tilde{\rho}^n)  , \, \, \,
   J_5 =  \frac{( \crho^{n+1} + \rho^{n+1} ) \tilde{\rho}^{n+1} }{6 ( \crho^{n+1})^2 ( \rho^{n+1} )^2 }
      (\crho^{n+1}-\crho^n)^2 .
\end{aligned}
  \label{prop CA2-1}
\end{equation}
Meanwhile, at each cell, from $(i,j,k) \to (i+1,j,k)$, the following expansion identity is always valid:
\begin{equation}
  D_x (fg )_{i+\hf, j,k} = ( A_x f )_{i+\hf,j,k} \cdot ( D_x g )_{i+\hf,j,k}
  + ( A_x g )_{i+\hf,j,k} \cdot ( D_x f )_{i+\hf,j,k} .
  \label{prop CA2-2}
\end{equation}

We first look at the $D_x J_3$ term. Because of the point-wise bounds, $\ve^* \le \crho^{n+1} \le  C^*$, $\frac{\ve^*}{2} \le \rho^{n+1} \le \ckC_0$, and $\|\nabla_h\rho^{n+1} \|_\infty \le \tdC_0$ as given by~\eqref{ConstSols:eq1}, \eqref{propr:eq14}, and \reff{propr:eq15}, respectively,  the following estimates could be derived:
\begin{align}
  &
  0 < A_x ( \frac{1}{\crho^{n+1}} ) \le  ( \ve^* )^{-1} , \, \, \,
  0 < A_x ( \frac{1}{\rho^{n+1}} ) \le  2 ( \ve^* )^{-1} , \, \, \,
  0 < A_x ( \frac{1}{\crho^{n+1} \rho^{n+1}} ) \le  2 ( \ve^* )^{-2} ,   \label{prop CA2-3-1}
\\
  &
  | D_x ( \frac{1}{\crho^{n+1}} ) | \le ( \ve^* )^{-2} | D_x \crho^{n+1} |  \le C^* ( \ve^* )^{-2} , \, \,
  | D_x ( \frac{1}{\rho^{n+1}} ) | \le 4 ( \ve^* )^{-2} | D_x \rho^{n+1} |  \le 4 \td C_0 ( \ve^* )^{-2} ,
   \label{prop CA2-3-2}
\\
  &
   | D_x ( \frac{1}{\crho^{n+1} \rho^{n+1}} ) |
   \le A_x ( \frac{1}{\crho^{n+1}} )  \cdot   | D_x ( \frac{1}{\rho^{n+1}} ) |
   +  A_x ( \frac{1}{\rho^{n+1}} )  \cdot   | D_x ( \frac{1}{\crho^{n+1}} ) |   \nonumber
\\
  &  \qquad \qquad \qquad \quad
  \le ( \ve^* )^{-3} ( 2 C^* + 4 \td C_0 ) ,   \label{prop CA2-3-3}
\end{align}
in which inequalities \eqref{ConstSols:eq2} and \eqref{propr:eq15} have also been applied. Subsequently, a further application of~\eqref{prop CA2-2} leads to
\begin{equation}
\begin{aligned}
 \Big| A_x  \Big( \frac{\tilde{\rho}^{n+1}}{2 \crho^{n+1} \rho^{n+1}} \Big) \Big|
  & \le \frac12 \cdot 2 ( \ve^* )^{-2} A_x | \tilde{\rho}^{n+1} |
 = ( \ve^* )^{-2} A_x | \tilde{\rho}^{n+1} |  ,
\\
   \Big| D_x  \Big( \frac{\tilde{\rho}^{n+1}}{2 \crho^{n+1} \rho^{n+1}} \Big) \Big|
   &\le \frac12 A_x ( \frac{1}{\crho^{n+1} \rho^{n+1}} ) \cdot | D_x \tilde{\rho}^{n+1} |
   + \frac12 | D_x ( \frac{1}{\crho^{n+1} \rho^{n+1}}) | \cdot | A_x \tilde{\rho}^{n+1} |
\\
  & \le 
    ( \ve^* )^{-2} | D_x  \tilde{\rho}^{n+1} |
    +  ( \ve^* )^{-3} ( C^* + 2 \td C_0 )  A_x | \tilde{\rho}^{n+1} | .
\end{aligned}
   \label{prop CA2-4}
\end{equation}
Meanwhile, the regularity assumption~\eqref{ConstSols:eq2} (for the constructed profile $
\crho$) implies that
\begin{equation}
  | \crho^{n+1} - \crho^n | \le C^* \dt ,  \quad
  | D_x ( \crho^{n+1} - \crho^n ) | \le C^* \dt , \quad
  \mbox{at a point-wise level} .
  \label{prop CA2-5}
\end{equation}
Consequently, a combination of~\eqref{prop CA2-4} and \eqref{prop CA2-5} reveals that
\begin{equation}
\begin{aligned}
  | D_x J_3 |
  \le & \Big| A_x  \Big( \frac{\tilde{\rho}^{n+1}}{2 \crho^{n+1} \rho^{n+1}} \Big) \Big|
   \cdot | D_x ( \crho^{n+1} - \crho^n ) |
   + \Big| D_x  \Big( \frac{\tilde{\rho}^{n+1}}{2 \crho^{n+1} \rho^{n+1}} \Big) \Big|
   \cdot   A_x | \crho^{n+1} - \crho^n |
\\
  \le &
   ( \ve^* )^{-2} C^* \dt \Big( | D_x  \tilde{\rho}^{n+1} |
    +  ( ( \ve^* )^{-1} ( C^* + 2 \td C_0 )  + 1 ) A_x | \tilde{\rho}^{n+1} | \Big) .
\end{aligned}
  \label{prop CA2-6}
\end{equation}
Notice that we have dropped the $| \cdot |_{i+\hf, j,k}$ subscript notation on the right hand side terms, for simplicity of presentation, since all these inequalities are derived at a point-wise level.

Similar bounds could be derived for $| D_x J_4|$ and $|D_x J_5|$ terms: 
\begin{equation}
\begin{aligned}
  & 0 < A_x ( \frac{1}{(\rho^{n+1})^2} ) \le  4 ( \ve^* )^{-2} ,   \quad
  | D_x ( \frac{1}{(\rho^{n+1})^2} ) | \le 16 ( \ve^* )^{-3} | D_x \rho^{n+1} |  \le 16 \td C_0 ( \ve^* )^{-3} ,
\\
  &
  | \crho^{n+1} - \crho^n | ,  | D_x ( \crho^{n+1} - \crho^n ) | \le C^* \dt,  \quad
  | \rho^{n+1} - \rho^n | \le \td C_0 \dt,
\\
  &
   | A_x (\crho^{n+1} - \crho^n + \rho^{n+1} - \rho^n ) | \le ( C^* + \td C_0 ) \dt,  \quad
 |  D_x( \crho^{n+1} - \crho^n + \rho^{n+1} - \rho^n ) | \le C^* \dt + 2 \td C_0 ,
\\
  &
   | A_x ( \frac{\crho^{n+1} - \crho^n + \rho^{n+1} - \rho^n}{( \rho^{n+1} )^2 } ) |
   \le ( C^* + \td C_0 ) \dt \cdot 4 ( \ve^* )^{-2}  = 4 ( \ve^* )^{-2} ( C^* + \td C_0 ) \dt ,
\\
  &
 | D_x ( \frac{\crho^{n+1} - \crho^n + \rho^{n+1} - \rho^n}{( \rho^{n+1} )^2 } ) |
   \le ( C^* + \td C_0 ) \dt \cdot 16 \td C_0 ( \ve^* )^{-3}
   + ( C^* \dt + 2 \td C_0 ) \cdot 4 ( \ve^* )^{-2}   
\\
  &  \qquad \qquad \qquad \qquad \qquad
  \le  4( \ve^* )^{-2}\dt  \Big( C^* + 4\td C_0 ( \ve^* )^{-1}( \td C_0 + C^*)\Big)
   + 8 ( \ve^* )^{-2}  \td C_0  
  \le  8 ( \ve^* )^{-2}  \td C_0  + 1 ,
\\
  &
  | D_x J_4 |
  \le  \frac16 | A_x ( \frac{\crho^{n+1} - \crho^n + \rho^{n+1} - \rho^n}{( \rho^{n+1} )^2 } ) |
   \cdot | D_x ( \td \rho^{n+1} - \td \rho^n ) |
\\
  &  \qquad \qquad \qquad  \quad
   + \frac16 | D_x ( \frac{\crho^{n+1} - \crho^n + \rho^{n+1} - \rho^n}{( \rho^{n+1} )^2 } ) |
   \cdot   A_x | \td \rho^{n+1} - \td \rho^n |
\\
  & \qquad \qquad
   \le \frac23 ( \ve^* )^{-2} ( C^* + \td C_0 ) \dt ( | D_x  \tilde{\rho}^{n+1} | +  | D_x  \tilde{\rho}^n | )
   + \frac16 ( 8 ( \ve^* )^{-2}  \td C_0  + 1 ) ( A_x | \tilde{\rho}^{n+1} | + A_x | \tilde{\rho}^n | ) 
   &
\\
   & 0 \le ( \crho^{n+1} - \crho^n )^2 \le ( C^* )^2 \dt^2 ,
\\
  &
   | D_x ( ( \crho^{n+1} - \crho^n )^2 )  | \le 2 \| D_x ( \crho^{n+1} - \crho^n ) \|_\infty
   \cdot \| \crho^{n+1} - \crho^n  \|_\infty \le 2 ( C^* )^2 \dt^2 ,
\\
  &
  0 < A_x ( \frac{1}{(\crho^{n+1})^2 (\rho^{n+1})^2 } ) \le  ( \ve^* )^{-2} \cdot 4 ( \ve^* )^{-2}
  = 4 ( \ve^* )^{-4} ,
\\
  &
  | D_x ( \frac{1}{(\crho^{n+1})^2 (\rho^{n+1})^2} ) | \le 8 (  \ve^* )^{-5} (  | D_x \crho^{n+1} |
  +2 | D_x \rho^{n+1} | ) \le 8 (C^* +2\td C_0) ( \ve^* )^{-5} ,
\\
  &
 | A_x ( \crho^{n+1} + \rho^{n+1} )  | \le C^* + \ckC_0 ,  \quad
  | D_x ( \crho^{n+1} + \rho^{n+1} )  | \le | D_x \crho^{n+1} | + | D_x\rho^{n+1}  |
  \le C^* + \td C_0 ,
\\
  &
 | A_x ( \frac{\crho^{n+1} + \rho^{n+1}}{( \crho^{n+1} )^2 (\rho^{n+1})^2 } ) |
   \le 4 ( \ve^* )^{-4} ( C^* + \ckC_0 )  ,
\\
  &
 | D_x ( \frac{\crho^{n+1} + \rho^{n+1} }{( \crho^{n+1} )^2 (\rho^{n+1})^2 } ) |
   \le ( C^* + \ckC_0 ) \cdot 8 (C^* +2\td C_0) ( \ve^* )^{-5}
   + ( C^* + \td C_0 ) \cdot 4 ( \ve^* )^{-4}
\\
  &  \qquad \qquad \qquad \qquad  \quad
 \le 8 ( \ve^* )^{-5}  ( C^* + \ckC_0 ) ( C^*+ 2\td C_0 + 1) ,
\\
  &
 | A_x ( \frac{( \crho^{n+1} + \rho^{n+1} ) ( \crho^{n+1} - \crho^n )^2}{( \crho^{n+1} )^2 (\rho^{n+1})^2 } ) |
   \le 4 ( \ve^* )^{-4} ( C^* + \ckC_0 )  (C^*)^2 \dt^2 ,
\\
&
 | D_x ( \frac{( \crho^{n+1} + \rho^{n+1} ) ( \crho^{n+1} - \crho^n )^2}{( \crho^{n+1} )^2 (\rho^{n+1})^2 } ) |
\\
  & \qquad
 \le  8 ( \ve^* )^{-5}  ( C^* + \ckC_0 ) ( C^*+ 2\td C_0 + 1)  \cdot  (C^*)^2 \dt^2
   +  4 ( \ve^* )^{-4} ( C^* + \ckC_0 ) \cdot 2  (C^*)^2 \dt^2
\\
  & \qquad
  \le  8 ( \ve^* )^{-5} (C^*)^2 \dt^2 ( C^* + \ckC_0 ) ( C^*+ 2\td C_0 + 2),
\end{aligned}
  \label{prop CA2-7}
\end{equation}
\begin{equation}
\begin{aligned}
  &
  | D_x J_5 |
  \le \frac16 | A_x ( \frac{( \crho^{n+1} + \rho^{n+1} ) ( \crho^{n+1} - \crho^n )^2}{( \crho^{n+1} )^2 (\rho^{n+1})^2 } ) |   \cdot | D_x \td \rho^{n+1}  |
\\
  &  \qquad \qquad \qquad  \quad
  +\frac16  | D_x ( \frac{( \crho^{n+1} + \rho^{n+1} ) ( \crho^{n+1} - \crho^n )^2}{( \rho^{n+1} )^2 (\rho^{n+1})^2 } ) |  \cdot  A_x | \td \rho^{n+1} |
\\
  & \qquad \quad
  \le \frac23  ( \ve^* )^{-4} (C^*)^2 \dt^2 ( C^* + \ckC_0 )\Big(2(\ve^*)^{-1} ( C^*+ 2\td C_0 + 2)\cdot A_x | \td \rho^{n+1} | +
     | D_x \td \rho^{n+1}  | \Big)
\\
  & \qquad \quad
 \le  \dt ( | D_x  \tilde{\rho}^{n+1} | +  A_x | \tilde{\rho}^{n+1} | ).
\end{aligned}
  \label{prop CA2-8}
\end{equation}
Notice that the phase separation property~\eqref{ConstSols:eq1}, regularity assumption~\eqref{ConstSols:eq2} for the constructed profile, a-priori estimate \reff{presolns:eq2}-\reff{lemim}, and the rough $\| \cdot\|_\infty$ estimates \reff{propr:eq14}-\reff{propr:eq17-2} for the numerical solution, have been repeatedly applied in the above derivation.


In fact, the finite difference operations for the $J_3$, $J_4$ and $J_5$ terms could be viewed as higher-order perturbations in the nonlinear expansion of $D_x \td S^{n+\frac12}$, and the two leading terms, namely $D_x J_1$ and $D_x J_2$, turn out to play a dominant role in the nonlinear error estimate. Now we focus on the $D_x J_1$ term. Within a single mesh cell $(i,j,k) \to (i+1,j,k)$, the following expansion is straightforward, based on the mean value theorem:
\begin{align}
  D_x (  \ln \crho^{n+1} - \ln \rho^{n+1} )_{i+\hf,j,k}
  =& \frac{1}{h} ( \ln \crho_{i+1,j,k}^{n+1} - \ln \crho_{i,j,k}^{n+1} )
  -  \frac{1}{h} ( \ln \rho_{i+1,j,k}^{n+1} - \ln \rho_{i,j,k}^{n+1} )  \nonumber
\\
   =& \frac{1}{\xi_{\crho}} D_x \crho_{i+\hf, j,k}^{n+1}
   - \frac{1}{\xi_\rho} D_x \rho_{i+\hf, j,k}^{n+1}    \nonumber
\\
  =& \Big( \frac{1}{\xi_{\crho}} -  \frac{1}{\xi_\rho} \Big) D_x \crho_{i+\hf, j,k}^{n+1}
  + \frac{1}{\xi_\rho} D_x \tilde{\rho}_{i+\hf, j,k}^{n+1}  ,
  \label{nonlinear est-1-1}
  \end{align}
  where
\begin{align}
  \frac{1}{\xi_{\crho}}  = \frac{\ln \crho_{i+1,j,k}^{n+1} - \ln \crho_{i,j,k}^{n+1} }{ \crho_{i+1,j,k}^{n+1} -  \crho_{i,j,k}^{n+1} }, \quad &  \quad
  \frac{1}{\xi_\rho}  = \frac{\ln \rho_{i+1,j,k}^{n+1} - \ln \rho_{i,j,k}^{n+1} }{ \rho_{i+1,j,k}^{n+1} -  \rho_{i,j,k}^{n+1} } .
  \label{nonlinear est-1-2}
	\end{align}
Meanwhile, for any $a > 0$ and $b > 0$, a careful Taylor expansion of $\ln x$ around a middle point $x_0 = \frac{a+b}{2}$ reveals that
\begin{equation}
  \frac{\ln b - \ln a}{b-a} = \frac{1}{ x_0} + \frac{(b-a)^2}{12 x_0^3} + \frac{(b-a)^4}{160}
  \Big( \frac{1}{\eta_1^5} + \frac{1}{\eta_2^5} \Big) ,  \label{Taylor-1}
\end{equation}
in which $\eta_1$ is between $a$ and $x_0$, $\eta_2$ is between $x_0$ and $b$. It is observed that, only the even order terms appear in~\eqref{Taylor-1}, due to the symmetric expansion around $x_0 = \frac{a+b}{2}$. This fact will greatly simplify the nonlinear analysis. In turn, a more precise representation for $\frac{1}{\xi_{\crho}}$ and $\frac{1}{\xi_{\rho}}$, as given in~\eqref{nonlinear est-1-2}, becomes available:
\begin{equation}
\begin{aligned}
  &
  \frac{1}{\xi_{\crho}} = \frac{1}{A_x \crho_{i+\hf,j,k}^{n+1}}
  + \frac{h^2 ( D_x \crho_{i+\hf,j,k}^{n+1} )^2}{12 ( A_x \crho_{i+\hf,j,k}^{n+1} )^3}
   + \frac{h^4 ( D_x \crho_{i+\hf,j,k}^{n+1} )^4}{160}
  \Big( \frac{1}{\eta_{\crho,1}^5} + \frac{1}{\eta_{\crho, 2}^5} \Big) ,
\\
  &
  \frac{1}{\xi_{\rho}} = \frac{1}{A_x \rho_{i+\hf,j,k}^{n+1}}
  + \frac{h^2 ( D_x \rho_{i+\hf,j,k}^{n+1} )^2}{12 ( A_x \rho_{i+\hf,j,k}^{n+1} )^3}
   + \frac{h^4 ( D_x \rho_{i+\hf,j,k}^{n+1} )^4}{160}
  \Big( \frac{1}{\eta_{\rho,1}^5} + \frac{1}{\eta_{\rho, 2}^5} \Big) ,
\\
  &
  \mbox{$\eta_{\crho,1}$ is between $\crho_{i,j,k}^{n+1}$ and $A_x \crho_{i+\hf,j,k}^{n+1}$, $\eta_{\crho,2}$ is between $A_x \crho_{i+\hf,j,k}^{n+1}$ and $\crho_{i+1,j,k}^{n+1}$} ,
\\
  &
  \mbox{$\eta_{\rho,1}$ is between $\rho_{i,j,k}^{n+1}$ and $A_x \rho_{i+\hf,j,k}^{n+1}$, $\eta_{\rho,2}$ is between $A_x \rho_{i+\hf,j,k}^{n+1}$ and $\rho_{i+1,j,k}^{n+1}$} .
\end{aligned}
  \label{nonlinear est-1-3}
\end{equation}
Then we obtain a detailed expansion of $\frac{1}{\xi_{\crho}} - \frac{1}{\xi_{\rho}}$, which is needed in the analysis for~\eqref{nonlinear est-1-2}:
\begin{equation}
\begin{aligned}
  \frac{1}{\xi_{\crho}} - \frac{1}{\xi_{\rho}}  = &
  \frac{-( A_x \td \rho)_{i+\hf,j,k}^{n+1}}{A_x \crho_{i+\hf,j,k}^{n+1} \cdot A_x \rho_{i+\hf,j,k}^{n+1}}   + \frac{h^2 ( D_x \rho_{i+\hf,j,k}^{n+1} + D_x \crho_{i+\hf,j,k}^{n+1} ) D_x \td \rho_{i+\hf,j,k}^{n+1} }{12 ( A_x \rho_{i+\hf,j,k}^{n+1} )^3}
\\
  &
  - \frac{h^2 ( D_x \crho_{i+\hf,j,k}^{n+1} )^2 ( A_x \td \rho)_{i+\hf,j,k}^{n+1} }{12 ( A_x \crho_{i+\hf,j,k}^{n+1} )^3 ( A_x \rho_{i+\hf,j,k}^{n+1} )^3}
\\
  &  \qquad
  \cdot \Big(  ( A_x \crho_{i+\hf,j,k}^{n+1} )^2 + A_x \crho_{i+\hf,j,k}^{n+1}  \cdot A_x \rho_{i+\hf,j,k}^{n+1} + ( A_x \rho_{i+\hf,j,k}^{n+1} )^2 \Big)
\\
  &
  + \frac{h^4 ( D_x \crho_{i+\hf,j,k}^{n+1} )^4}{160}
  \Big( \frac{1}{\eta_{\crho,1}^5} + \frac{1}{\eta_{\crho, 2}^5} \Big)
  - \frac{h^4 ( D_x \rho_{i+\hf,j,k}^{n+1} )^4}{160}
  \Big( \frac{1}{\eta_{\rho,1}^5} + \frac{1}{\eta_{\rho, 2}^5} \Big) .
\end{aligned}
  \label{nonlinear est-1-4}
\end{equation}
On the other hand, because of the following bounds at $(i + \hf, j,k)$ and time step $t_{n+1}$:
\begin{equation}
\begin{aligned}
  &
   \ve^* \le A_x \crho \le C^* , \, \, \,  | D_x \crho | \le C^* , \quad
  \frac{\ve^*}{2} \le A_x \rho \le \ckC_0 , \,  \, \, | D_x \rho | \le \td C_0 ,
\\
  &
  \ve^* \le \eta_{\crho,1} , \, \eta_{\crho,2} \le C^* ,  \quad
  \frac{\ve^*}{2} \le \eta_{\rho,1} , \, \eta_{\rho,2} \le \ckC_0 ,
\end{aligned}
  \label{nonlinear est-1-5}
\end{equation}
the following estimates could be derived:
\begin{equation}
\begin{aligned}
  &
  \Big| \frac{ A_x \td \rho }{A_x \crho \cdot A_x \rho} \Big| \le  2 ( \ve^* )^{-2} | A_x \td \rho | ,
\\
  &
 \Big|  \frac{h^2 ( D_x \rho + D_x \crho ) D_x \td \rho }{12 ( A_x \rho )^3}  \Big|
  \le \frac{h^2}{12} \cdot 8 ( \ve^* )^{-3} \cdot ( C^* + \td C_0 ) | D_x \td \rho |
  = \frac{2 h^2}{3} ( \ve^* )^{-3} ( C^* + \td C_0 ) | D_x \td \rho |  ,
\\
  &
   \Big| \frac{h^2 ( D_x \crho )^2 ( A_x \td \rho) }{12 ( A_x \crho )^3 ( A_x \rho )^3}
  \cdot \Big(  ( A_x \crho )^2 + A_x \crho  \cdot A_x \rho + ( A_x \rho )^2 \Big) \Big|
\\
  &  \qquad
  \le \frac{h^2}{12} \cdot 8 ( \ve^* )^{-6} \cdot ( C^* +\ckC_0)^2 \cdot  (C^*)^2 | A_x \td \rho |
  = \frac23 ( \ve^* )^{-6} ( C^*+\ckC_0 )^2 (C^*)^2 h^2 | A_x \td \rho |  ,
\\
  &
   \Big| \frac{h^4 ( D_x \crho )^4}{160}
  \Big( \frac{1}{\eta_{\crho,1}^5} + \frac{1}{\eta_{\crho, 2}^5} \Big)  \Big|
  \le \frac{h^4}{160} \cdot ( C^*)^4 \cdot 2 (\ve^*)^{-5}
  = \frac{( C^*)^4 (\ve^*)^{-5} }{80} h^4 ,
\\
  &
  \Big| \frac{h^4 ( D_x \rho )^4}{160}
  \Big( \frac{1}{\eta_{\rho,1}^5} + \frac{1}{\eta_{\rho, 2}^5} \Big) \Big|
   \le \frac{h^4}{160} \cdot ( \td C_0 )^4 \cdot 64 (\ve^*)^{-5}
  = \frac{2 ( \td C_0 )^4 (\ve^*)^{-5} }{5} h^4 .
\end{aligned}
  \label{nonlinear est-1-6}
\end{equation}
Again, we have dropped the $| \cdot |_{i+\hf, j,k}^{n+1}$ script notation in the analysis, for simplicity of presentation, and all these inequalities are derived at a point-wise level. Subsequently, a substitution of \eqref{nonlinear est-1-6} into \eqref{nonlinear est-1-4} yields
\begin{equation}
  \Big| \frac{1}{\xi_{\crho}} - \frac{1}{\xi_{\rho}}  \Big|
  \le  ( 2 ( \ve^* )^{-2} + 1) |  A_x \td \rho_{i+\hf, j,k}^{n+1} |
   + h |  D_x \td \rho_{i+\hf, j,k}^{n+1} |  + \frac{2 ( \td C_0 +C^*)^4 (\ve^*)^{-5} }{5} h^4 ,
   \label{nonlinear est-1-7}
\end{equation}
provided that $h$ is sufficiently small.

In terms of the second expansion term on the right hand side of~\eqref{nonlinear est-1-1}, we observe an $O (h^2)$ Taylor expansion to obtain $\frac{1}{\xi_\rho}$, in a similar formula as in~\eqref{nonlinear est-1-3}:
 \begin{equation}
\begin{aligned}
  &
  \frac{1}{\xi_{\rho}} = \frac{1}{A_x \rho_{i+\hf,j,k}^{n+1}}
   + \frac{h^2 ( D_x \rho_{i+\hf,j,k}^{n+1} )^2}{24}
  \Big( \frac{1}{\eta_{\rho,3}^3} + \frac{1}{\eta_{\rho, 4}^3} \Big) ,
\\
  &
  \mbox{$\eta_{\rho,3}$ is between $\rho_{i,j,k}^{n+1}$ and $A_x \rho_{i+\hf,j,k}^{n+1}$, $\eta_{\rho,4}$ is between $A_x \rho_{i+\hf,j,k}^{n+1}$ and $\rho_{i+1,j,k}^{n+1}$} .
\end{aligned}
  \label{nonlinear est-2-1}
\end{equation}
Again, the remaining expansion terms could be bounded as follows
\begin{equation}
\begin{aligned}
  &
  \frac{\ve^*}{2} \le A_x \rho \le \ckC_0 , \,  \, \, | D_x \rho | \le \td C_0 ,  \quad
  \frac{\ve^*}{2} \le \eta_{\rho,3} , \, \eta_{\rho,4} \le \ckC_0 ,  \quad \mbox{so that}
\\
  &
  \Big| \frac{h^2 ( D_x \rho )^2}{24}
  \Big( \frac{1}{\eta_{\rho,3}^3} + \frac{1}{\eta_{\rho, 4}^3} \Big) \Big|
   \le \frac{h^2}{24} \cdot ( \td C_0 )^2 \cdot 16 (\ve^*)^{-3}
  = \frac{2 ( \td C_0 )^2 (\ve^*)^{-3} }{3} h^2 \le h ,
\end{aligned}
  \label{nonlinear est-2-2}
\end{equation}
provided that $h$ is sufficiently small.

As a result, a substitution of \eqref{nonlinear est-1-7}-\eqref{nonlinear est-2-2} into \eqref{nonlinear est-1-1} reveals the following fact, at a point-wise level:
\begin{equation}
\begin{aligned}
  &
  ( D_x J_1 )_{i+\hf, j,k} = \frac{D_x \td \rho_{i+\hf, j,k}^{n+1} }{A_x \rho_{i+\hf,j,k}^{n+1}}
  + \zeta_1^{n+\frac12} ,  \quad \mbox{with}
\\
  &
  | \zeta_1^{n+\frac12} | \le
  ( 2 ( \ve^* )^{-2} + 1)C^*  |  A_x \td \rho_{i+\hf, j,k}^{n+1} |
   + (C^* + 1) h |  D_x \td \rho_{i+\hf, j,k}^{n+1} |
   \\
   &\qquad\qquad+ \frac{2 C^*  ( \td C_0 +C^*)^4 (\ve^*)^{-5} }{5} h^4 .
\end{aligned}
  \label{nonlinear est-3}
\end{equation}
Notice that the $W_h^{1,\infty}$ bound for $\crho$, $\| D_x \crho^{n+1} \|_\infty \le C^*$, has been applied.

The analysis for the $D_x J_2$ part is more straightforward. An application of identity~\eqref{prop CA2-2} gives
\begin{equation}
  D_x J_2 = - \frac12 A_x ( \frac{1}{\rho^{n+1}} ) \cdot D_x ( \tilde{\rho}^{n+1}- \tilde{\rho}^n)
  - \frac12 D_x ( \frac{1}{\rho^{n+1}} ) \cdot A_x ( \tilde{\rho}^{n+1}- \tilde{\rho}^n) .
  \label{nonlinear est-4-1}
\end{equation}
The second term could be bounded as follows
\begin{equation}
\begin{aligned}
  &
  | D_x ( \frac{1}{\rho^{n+1}} ) | \le 4 \td C_0 ( \ve^* )^{-2} ,  \quad \mbox{so that}
\\
  &
  \Big| D_x ( \frac{1}{\rho^{n+1}} ) \cdot A_x ( \tilde{\rho}^{n+1}- \tilde{\rho}^n) \Big|
  \le 4 \td C_0 ( \ve^* )^{-2}  ( | A_x  \tilde{\rho}^{n+1} | + | A_x \tilde{\rho}^n | ) .
\end{aligned}
  \label{nonlinear est-4-2}
\end{equation}
In terms of the first part on the right hand side of~\eqref{nonlinear est-4-1}, we need to estimate the difference between $A_x ( \frac{1}{\rho^{n+1}} )$ and $\frac{1}{A_x \rho^{n+1}}$. Similarly, for any $a > 0$, $b > 0$, a careful Taylor expansion of $\frac{1}{x}$ around a middle point $x_0 = \frac{a+b}{2}$ reveals that
\begin{equation}
  \frac12 \Big( \frac{1}{a} + \frac{1}{b} \Big) = \frac{1}{x_0} + \frac{(b-a)^2}{8}
  \Big( \frac{1}{\eta_1^3} + \frac{1}{\eta_2^3} \Big) ,  \label{Taylor-2}
\end{equation}
in which $\eta_1$ is between $a$ and $x_0$, $\eta_2$ is between $x_0$ and $b$. In turn, by setting $a=\rho_{i,j,k}^{n+1}$, $b=\rho_{i+1,j,k}^{n+1}$, we see that
\begin{equation}
\begin{aligned}
  &
  A_x ( \frac{1}{\rho^{n+1}} )_{i+\hf, j,k} - \frac{1}{A_x \rho^{n+1}_{i+\hf, j,k} }
  = \frac{h^2 (D_x \rho_{i+\hf, j,k}^{n+1} )^2 }{8}
  \Big( \frac{1}{\eta_{\rho, 5}^3} + \frac{1}{\eta_{\rho, 6}^3} \Big) ,
\\
  &
  \mbox{$\eta_{\rho,5}$ is between $\rho_{i,j,k}^{n+1}$ and $A_x \rho_{i+\hf,j,k}^{n+1}$, $\eta_{\rho,6}$ is between $A_x \rho_{i+\hf,j,k}^{n+1}$ and $\rho_{i+1,j,k}^{n+1}$} .
\end{aligned}
  \label{nonlinear est-4-3}
\end{equation}
A bound for the remainder term is available:
\begin{equation}
\begin{aligned}
  &
  | D_x \rho^{n+1} | \le \td C_0 ,  \quad
  \frac{\ve^*}{2} \le \eta_{\rho, 5} , \, \eta_{\rho, 6} \le \ckC_ 0 ,  \quad \mbox{so that}
\\
  &
  \Big| \frac{h^2 (D_x \rho_{i+\hf, j,k}^{n+1} )^2 }{8}
  \Big( \frac{1}{\eta_{\rho, 5}^3} + \frac{1}{\eta_{\rho, 6}^3} \Big) \Big|
  \le \frac{h^2}{8} \cdot ( \td C_0 )^2 \cdot 16 ( \ve^* )^{-3} \le h ,
\end{aligned}
  \label{nonlinear est-4-4}
\end{equation}
provided that $h$ is sufficiently small. Therefore, a substitution of \eqref{nonlinear est-4-2}, \eqref{nonlinear est-4-3}, and \eqref{nonlinear est-4-4} into \eqref{nonlinear est-4-1} leads to
\begin{equation}
\begin{aligned}
   &
  ( D_x J_2 )_{i+\hf, j, k}
  = - \frac{D_x ( \tilde{\rho}^{n+1}- \tilde{\rho}^n)_{i+\hf, j,k} }{2 A_x \rho^{n+1}_{i+\hf, j,k} }
  + \zeta_2^{n+\frac12} ,  \quad \mbox{with}
\\
  &
  | \zeta_2^{n+\frac12} | \le 2 \td C_0 ( \ve^* )^{-2}  ( |A_x  \tilde{\rho}^{n+1}_{i+\hf, j,k} |
  + | A_x \tilde{\rho}^n_{i+\hf, j,k} | )
  + \frac{h}{2} ( | D_x  \tilde{\rho}^{n+1}_{i+\hf, j,k} |
  + | D_x \tilde{\rho}^n_{i+\hf, j,k} | )  .
\end{aligned}
  \label{nonlinear est-4-5}
\end{equation}

Finally, a combination of \eqref{prop CA2-6}, \eqref{prop CA2-7}, \eqref{prop CA2-8}, \eqref{nonlinear est-3}, and \eqref{nonlinear est-4-5} results in
\begin{equation}
\begin{aligned}
  &
  ( D_x \td S^{n+\frac12} )_{i+\hf, j,k} = \frac{D_x ( \td \rho^{n+1}  + \td \rho^n )_{i+\hf, j,k}  }{2 A_x \rho_{i+\hf,j,k}^{n+1}}
  + \zeta^{n+\frac12} ,
\\
  &
  | \zeta^{n+\frac12} | \le
  \breve{C}_1 (  A_x | \td \rho_{i+\hf, j,k}^{n+1} |  + A_x | \td \rho_{i+\hf, j,k}^n | )
   + ( \breve{C}_2 h  + \breve{C}_3 \dt ) ( |  D_x \td \rho_{i+\hf, j,k}^{n+1} |
   + |  D_x \td \rho_{i+\hf, j,k}^n | )
\\
  & \qquad \qquad
   + \frac{2 C^* ( \td C_0 +C^*)^4 (\ve^*)^{-5}}{5} h^4 ,
\end{aligned}
  \label{nonlinear est-5-1}
\end{equation}
provided that $\dt$ and $h$ are sufficiently small, with
\begin{equation}
\begin{aligned}
  &
  \breve{C}_1 = \frac16 ( 8 ( \ve^* )^{-2}  \td C_0  + 1 ) +  ( 2 ( \ve^* )^{-2} + 1)C^*
  + 2 \td C_0 ( \ve^* )^{-2} + 1 ,
\\
  &
  \breve{C}_2 = C^* + \frac32 , \quad
  \breve{C}_3 = \frac53 ( \ve^* )^{-2} ( C^* + \td C_0 ) +1 .
\end{aligned}
  \label{nonlinear est-5-2}
\end{equation}
In particular, it is observed that the following identity has played a crucial role in the combined form of~\eqref{nonlinear est-5-1}:
\begin{equation}
  D_x \td \rho^{n+1} - \frac12 D_x ( \td \rho^{n+1} - \td \rho^n )
  = \frac12 D_x ( \td \rho^{n+1} + \td \rho^n )  .
  \label{nonlinear est-5-3}
\end{equation}

  As a consequence, the point-wise estimate~\eqref{nonlinear est-5-1}  implies that
\begin{equation}
\begin{aligned}
\ciptwo{\hat{\rho}^{n+\frac{1}{2}} D_x \td S^{n+\frac12} }{ D_x ( \td\rho^{n+1} + \td \rho^n ) }
  =  & \Big\langle \frac{\hat{\rho}^{n+\frac12}}{2 \rho^{n+1}}  D_x ( \td \rho^{n+1} + \td \rho^n )  ,
   D_x ( \td\rho^{n+1} + \td \rho^n )  \Big\rangle
\\
  &
   + \langle \hat{\rho}^{n+\frac{1}{2}} \zeta^{n+\frac12} ,  D_x ( \td\rho^{n+1} + \td \rho^n ) \rangle .
\end{aligned}
  \label{nonlinear est-5-4}
\end{equation}
Meanwhile, the point-wise ratio bound~\eqref{propr:eq17-2} reveals that
\begin{equation}
  \Big\langle \frac{\hat{\rho}^{n+\frac12}}{2 \rho^{n+1}}  D_x ( \td \rho^{n+1} + \td \rho^n )  ,
   D_x ( \td\rho^{n+1} + \td \rho^n )  \Big\rangle
   \ge \frac38 \| D_x ( \td\rho^{n+1} + \td \rho^n ) \|_2^2 .
   \label{nonlinear est-5-5}
\end{equation}
In terms of the second term on the right hand side of~\eqref{nonlinear est-5-4}, a direct application of the Cauchy inequality indicates that
\begin{equation}
   \langle \hat{\rho}^{n+\frac{1}{2}} \zeta^{n+\frac12} ,  D_x ( \td\rho^{n+1} + \td \rho^n ) \rangle
   \ge - \frac18 \| D_x ( \td\rho^{n+1} + \td \rho^n ) \|_2^2 - 2 \|\hat{\rho}^{n+\frac{1}{2}} \zeta^{n+\frac12} \|_2^2 .
   \label{nonlinear est-5-6}
\end{equation}
A further application of the Cauchy inequality gives
\begin{equation}
\begin{aligned}
  \| \hat{\rho}^{n+\frac{1}{2}} \zeta^{n+\frac12} \|_2^2 \le &
  7 \tilde{C}_0^2 \breve{C}_1^2 (  \| \td \rho^{n+1} \|_2^2  + \| \td \rho^n \|_2^2 )
   + 7 \tilde{C}_0^2( \breve{C}_2^2 h^2  + \breve{C}_3^2 \dt^2 ) ( \|  D_x \td \rho^{n+1} \|_2^2
   + \| D_x \td \rho^n \|_2^2 )
\\
  &
   + \frac{28 (\tilde{C}_0 C^*)^2 ( \td C_0+C^* )^8 (\ve^*)^{-10} }{25} h^8
\\
  \le &
   7 ( \tilde{C}_0^2\breve{C}_1^2 + \tilde{C}_0^2\breve{C}_2^2 \hat{C}_4^2
   +  \tilde{C}_0^2\breve{C}_2^2 \hat{C}_3^2 ) (  \| \td \rho^{n+1} \|_2^2  + \| \td \rho^n \|_2^2 ) 
\\
  & 
   + \frac{28 (\tilde{C}_0 C^*)^2 ( \td C_0 +C^*)^8 (\ve^*)^{-10} }{25} h^8,
\end{aligned}
   \label{nonlinear est-5-7}
\end{equation}
in which the inverse inequalities, $\dt \| \nabla_h f \|_2 \le \hat{C}_3 \| f\|_2$ and $h \| \nabla_h f \|_2 \le \hat{C}_4 \| f\|_2$ (with the linear refinement requirement $\lambda_1 h \le \dt \le\lambda_2 h$), have been applied. Therefore, a substitution of \eqref{nonlinear est-5-5}-\eqref{nonlinear est-5-7} into \eqref{nonlinear est-5-4} yields
\[
\begin{aligned}
  \gamma \langle \hat{\rho}^{n+\frac{1}{2}} D_x \td S^{n+\frac12} , D_x ( \td\rho^{n+1} + \td \rho^n ) \rangle
  \ge   \frac{\gamma }{4} \| D_x ( \td\rho^{n+1} + \td \rho^n ) \|_2^2
  - \breve{C}_5  (  \| \td \rho^{n+1} \|_2^2  + \| \td \rho^n \|_2^2 )
   -  Q^{(0)} h^8,
\end{aligned}
\label{nonlinear est-5-8}
\]
with
\[
\begin{aligned}
  \breve{C}_5 = 14\gamma  ( \tilde{C}_0^2\breve{C}_1^2 + \tilde{C}_0^2\breve{C}_2^2 \hat{C}_4^2
   +  \tilde{C}_0^2\breve{C}_2^2 \hat{C}_3^2 ) ,  \quad
   Q^{(0)} = \frac{56\gamma  (\tilde{C}_0 C^*)^2 ( \td C_0 +C^*)^8 (\ve^*)^{-10} }{25}.  
\end{aligned}
\]
The nonlinear diffusion error estimates in the $y$ and $z$ directions could be performed in the same manner. This completes the proof of Proposition~\ref{prop:CA2}, by taking $\td C_1 = 3 \breve{C}_5$ and $M_1 = 3 Q^{(0)}$.

\vskip 5mm
\noindent{\bf Acknowledgements.}
This work is supported in part by the National Natural Science Foundation of China 12101264, Natural Science Foundation of Jiangsu Province BK20210443, High level personnel project of Jiangsu Province 1142024031211190 (J. Ding), National Science Foundation DMS-2012269 and DMS-2309548 (C. Wang), and National Natural Science Foundation of China 12171319 (S. Zhou).

\bibliographystyle{plain}
\bibliography{KS}

\begin{thebibliography}{10}

\bibitem{AcostaGonzalezGalvan_JSC2023}
D.~Acosta-Soba, F.~Guill{\'e}n-Gonz{\'a}lez, and J.~Rodr{\'\i}guez-Galv{\'a}n.
\newblock An unconditionally energy stable and positive upwind {DG scheme for
  the Keller--Segel} model.
\newblock {\em J. Sci. Comput.}, 97(1):18, 2023.

\bibitem{BadiaBonillaSantacreu_MMMAS2023}
S.~Badia, J.~Bonilla, and J.~V. Guti{\'e}rrez-Santacreu.
\newblock Bound-preserving finite element approximations of the {Keller--Segel}
  equations.
\newblock {\em Math. Models Methods Appl. Sci.}, 33(03):609--642, 2023.

\bibitem{ChatardJungel_IMA2012}
M.~Bessemoulin-Chatard and A.~J{\"u}ngel.
\newblock A finite volume scheme for a {Keller–Segel} model with additional
  cross-diffusion.
\newblock {\em IMA J. Numer. Anal.}, 34(1):96--122, 2012.

\bibitem{BlanchetCarlenCarrillo_JFA2012}
A.~Blanchet, E.~A. Carlen, and J.~A. Carrillo.
\newblock Functional inequalities, thick tails and asymptotics for the critical
  mass {Patlak–Keller–Segel} model.
\newblock {\em J. Funct. Anal.}, 262:2142--2230, 2010.

\bibitem{BlanchetCarrilloLisini_M2AN2014}
A.~Blanchet, J.~A. Carrillo, D.~Kinderlehrer, M.~Kowalczyk, P.~Laurencçot, and
  S.~Lisini.
\newblock A hybrid variational principle for the {Keller-Segel} system in
  $\mathbb{R}^2$.
\newblock {\em ESIAM: M2AN}, 49(6):1553--1576, 2015.

\bibitem{BlanchetDolbeaultPerthame_EJDE2006}
A.~Blanchet, J.~Dolbeault, and B.~Perthame.
\newblock Two-dimensional {Keller-Segel model: Optimal} critical mass and
  qualitative properties of the solutions.
\newblock {\em Electron J. Differ. Equ.}, 44:1--33, 2006.

\bibitem{CalvezCorrias_CMS2008}
V.~Calvez and L.~Corrias.
\newblock The parabolic-parabolic {Keller-Segel model in $\mathbb{R}^2$}.
\newblock {\em Commun. Math. Sci.}, 6:417--447, 2008.

\bibitem{ChenJingWangWang_JSC22}
W.~Chen, J.~Jing, C.~Wang, and X.~Wang.
\newblock A positivity preserving, energy stable finite difference scheme for
  the {Flory-Huggins-Cahn-Hilliard-Navier-Stokes} system.
\newblock {\em J. Sci. Comput.}, 92, 2022.

\bibitem{ChenJingWangWangWise_CiCP22}
W.~Chen, J.~Jing, C.~Wang, X.~Wang, and S.~M. Wise.
\newblock A modified {Crank-Nicolson} numerical scheme for the {Flory-Huggins
  Cahn-Hilliard} model.
\newblock {\em Commun. Comput. Phys.}, 31(1):60--93, 2022.

\bibitem{ChenWangWangWise_JCP2019}
W.~Chen, C.~Wang, X.~Wang, and S.~M. Wise.
\newblock Positivity-preserving, energy stable numerical schemes for the
  {Cahn-Hilliard} equation with logarithmic potential.
\newblock {\em J. Comput. Phys.: X}, 3:100031, 2019.

\bibitem{ChertockKurganov_NM2008}
A.~Chertock and A.~Kurganov.
\newblock A second-order positivity preserving central-upwind scheme for
  chemotaxis and haptotaxis models.
\newblock {\em Numer. Math.}, 111(2):169--205, 2008.

\bibitem{CongLiu_DCDSB2016}
W.~Cong and J.‐G. Liu.
\newblock Uniform {$L^\infty$} boundedness for a degenerate parabolic-parabolic
  {Keller-Segel} model.
\newblock {\em Discrete Contin. Dyn. Syst. Ser. B}, 22:307--338, 2017.

\bibitem{DingWangZhou_NMTMA19}
J.~Ding, C.~Wang, and S.~Zhou.
\newblock Optimal rate convergence analysis of a second order numerical scheme
  for the {Poisson--Nernst--Planck} system.
\newblock {\em Numer. Math. Theor. Meth. Appl.}, 12:607--626, 2019.

\bibitem{DolakSchmeiser_SIAM2005}
Y.~Dolak and C.~Schmeiser.
\newblock The {Keller--Segel} model with logistic sensitivity function and
  small diffusivity.
\newblock {\em SIAM J. Appl. Math.}, 66(1):286--308, 2005.

\bibitem{DongWangWiseZhang_JCP21}
L.~Dong, C.~Wang, S.~M. Wise, and Z.~Zhang.
\newblock A positivity-preserving, energy stable scheme for a ternary
  {Cahn-Hilliard} system with the singular interfacial parameters.
\newblock {\em J. Comput. Phys.}, 442:110451, 2021.

\bibitem{DongWangWiseZhang_JCAM22}
L.~Dong, C.~Wang, S.~M. Wise, and Z.~Zhang.
\newblock Optimal rate convergence analysis of a numerical scheme for the
  ternary {Cahn-Hilliard} system with a {Flory-Huggins-deGennes} energy
  potential.
\newblock {\em J. Comput. Appl. Math.}, 415:114474, 2022.

\bibitem{DongWangZhangZhang_CMS19}
L.~Dong, C.~Wang, H.~Zhang, and Z.~Zhang.
\newblock A positivity-preserving, energy stable and convergent numerical
  scheme for the {Cahn–Hilliard equation with a Flory–Huggins–deGennes}
  energy.
\newblock {\em Commun. Math. Sci.}, 17(4):921--939, 2019.

\bibitem{DongWangZhangZhang_CiCP20}
L.~Dong, C.~Wang, H.~Zhang, and Z.~Zhang.
\newblock A positivity-preserving second-order {BDF} scheme for the
  {Cahn-Hilliard} equation with variable interfacial parameters.
\newblock {\em Commun. Comput. Phys.}, 28(3):967--998, 2020.

\bibitem{Epshteyn_JSC2012}
Y.~Epshteyn.
\newblock Upwind-difference potentials method for {Patlak-Keller-Segel}
  chemotaxis model.
\newblock {\em J. Sci. Comput.}, 53:689--713, 2012.

\bibitem{EpshteynIzmirlioglu_JSC2009}
Y.~Epshteyn and A.~Izmirlioglu.
\newblock Fully discrete analysis of a discontinuous finite element method for
  the {Keller-Segel} chemotaxis model.
\newblock {\em J. Sci. Comput.}, 40(1-3):211--256, 2009.

\bibitem{EpshteynKurganov_SINM2008}
Y.~Epshteyn and A.~Kurganov.
\newblock New interior penalty discontinuous galerkin methods for the
  {Keller-Segel} chemotaxis model.
\newblock {\em SIAM J. Numer. Anal.}, 47(1):386--408, 2008.

\bibitem{Filbet_NM2006}
F.~Filbet.
\newblock A finite volume scheme for the {Patlak–Keller–Segel} chemotaxis
  model.
\newblock {\em Numer. Math.}, 104:457--488, 2006.

\bibitem{guo16}
J.~Guo, C.~Wang, S.~M. Wise, and X.~Yue.
\newblock An {$H^2$} convergence of a second-order convex-splitting, finite
  difference scheme for the three-dimensional {Cahn-Hilliard} equation.
\newblock {\em Commun. Math. Sci.}, 14:489--515, 2016.

\bibitem{HerreroVelazquez_ASNS1997}
M.~A. Herrero and J.~J.~L. Vel{\'a}zquez.
\newblock A blow-up mechanism for a chemotaxis model.
\newblock {\em Ann. Sc. Norm. Super.}, 24:633--683, 1997.

\bibitem{HillenPainter_2001}
T.~Hillen and K.~Painter.
\newblock Global existence for a parabolic chemotaxis model with prevention of
  overcrowding.
\newblock {\em Adv. in Appl. Math.}, 26(4):280--301, 2001.

\bibitem{HillenPainter_JMB2009}
T.~Hillen and K.~Painter.
\newblock A user’s guide to {PDE} models for chemotaxis.
\newblock {\em J. Math. Biol.}, 58:183--217, 2009.

\bibitem{Horstmann_DMV2003}
D.~Horstmann.
\newblock From 1970 until now: the {Keller-Segel} model in chemotaxis and its
  consequences {I. Jahresber}.
\newblock {\em Deutsch. Math.-Verein.}, 105:103--165, 2003.

\bibitem{Horstmann_DMV2004}
D.~Horstmann.
\newblock From 1970 until now: the {Keller-Segel} model in chemotaxis and its
  consequences {II. Jahresber}.
\newblock {\em Deutsch. Math.-Verein.}, 106:51--69, 2004.

\bibitem{HorstmannWang_2001EJAM}
D.~Horstmann and G.~Wang.
\newblock Blow-up in a chemotaxis model without symmetry assumptions.
\newblock {\em European J. Appl. Math.}, 12:159--177, 2001.

\bibitem{HuangShen_2021SISC}
F.~Huang and J.~Shen.
\newblock Bound/positivity preserving and energy stable scalar auxiliary
  variable schemes for dissipative systems: applications to {Keller-Segel and
  Poisson-Nernst-Planck} equations.
\newblock {\em SIAM J. Sci. Comput.}, 43:A1832--A1857, 2021.

\bibitem{JgerLuckhaus_TAMS1992}
W.~J{\"a}ger and S.~Luckhaus.
\newblock On explosions of solutions to a system of partial differential
  equations modelling chemotaxis.
\newblock {\em Trans. Amer. Math. Soc.}, 329:819--824, 1992.

\bibitem{KellerSegel_JTB1971}
E.~Keller and L.~Segel.
\newblock Traveling bands of chemotactic bacteria: a theoretical analysis.
\newblock {\em J. Theoret. Biol.}, 30(2):235--48, 1971.

\bibitem{Kurganov_DCDSS2014}
A.~Kurganov and M.~Luk{\'a}{\v{c}}ov{\'a}-Medvi{\v{d}}ov.
\newblock Numerical study of two-species chemotaxis models.
\newblock {\em Discrete Contin. Dyn. Syst. Ser. B}, 19(1), 2014.

\bibitem{LiuWangWiseYueZhou_2021}
C.~Liu, C.~Wang, S.~Wise, X.~Yue, and S.~Zhou.
\newblock A positivity-preserving, energy stable and convergent numerical
  scheme for the {Poisson--Nernst-Planck} system.
\newblock {\em Math. Comput.}, 90(331):2071--2106, 2021.

\bibitem{LiuWang_JSC2023}
C.~Liu, C.~Wang, S.~M. Wise, X.~Yue, and S.~Zhou.
\newblock A second order accurate numerical method for the
  {Poisson--Nernst--Planck} system in the energetic variational formulation.
\newblock {\em J. Sci. Comput.}, 97:23, 2023.

\bibitem{LiuWangZhou_2016MC}
J.-G. Liu, L.~Wang, and Z.~Zhou.
\newblock Positivity-preserving and asymptotic preserving method for {2D
  Keller-Segal} equations.
\newblock {\em Math. Comput.}, 87(311):1165--1189, 2016.

\bibitem{LiuWang_AML2016}
J.‐G. Liu and J.~Wang.
\newblock Refined hyper-contractivity and uniqueness for the {Keller-Segel}
  equations.
\newblock {\em Appl. Math. Lett.}, 52:212--219, 2016.

\bibitem{Nagai_JIA2001}
T.~Nagai.
\newblock Blowup of nonradial solutions to parabolic–elliptic systems
  modeling chemotaxis in two-dimensional domains.
\newblock {\em J. Inequal. Appl.}, 2001:970292, 2001.

\bibitem{NagaiSenbaYoshida_FE1997}
T.~Nagai, T.~Senba, and K.~Yoshida.
\newblock Application of the {Trudinger-Moser} inequality to a parabolic system
  of chemotaxis.
\newblock {\em Funkcial. Ekvac.}, 40:411--433, 1997.

\bibitem{Patlak_BMB1953}
C.~S. Patlak.
\newblock Random walk with persistence and external bias.
\newblock {\em Bull. Math. Biophys}, 15:311--338, 1953.

\bibitem{Perthame_BB2006}
B.~Perthame.
\newblock {\em {Transport Equations in Biology}}.
\newblock Frontiers in Mathematicas, Birkh{\"a}user Basel, 2007.

\bibitem{QinWangZhang_INAM21}
Y.~Qin, C.~Wang, and Z.~Zhang.
\newblock A positivity-preserving and convergent numerical scheme for the
  binary fluid-surfactant system.
\newblock {\em Int. J. Numer. Anal. Model.}, 18(3):399--425, 2021.

\bibitem{ShenXu_2020SINA}
J.~Shen and J.~Xu.
\newblock Unconditionally bound preserving and energy dissipative schemes for a
  class of {Keller-Segel} equations.
\newblock {\em SIAM J. Numer. Anal.}, 58:1674--1695, 2020.

\bibitem{Velazquez_SIAM2004_1}
J.~Vel{\'a}zquez.
\newblock Point dynamics in a singular limit of the {Keller--Segel model 1:
  Motion} of the concentration regions.
\newblock {\em SIAM J. Appl. Math.}, 64(4):1198--1223, 2004.

\bibitem{Velazquez_SIAM2004_2}
J.~Vel{\'a}zquez.
\newblock Point dynamics in a singular limit of the {Keller--Segel model 2:
  Formation} of the concentration regions.
\newblock {\em SIAM J. Appl. Math.}, 64(4):1224--1248, 2004.

\bibitem{wang11a}
C.~Wang and S.~M. Wise.
\newblock An energy stable and convergent finite-difference scheme for the
  modified phase field crystal equation.
\newblock {\em SIAM J. Numer. Anal.}, 49:945--969, 2011.

\bibitem{WangZhouShiChen_JCP2022}
S.~Wang, S.~Zhou, S.~Shi, and W.~Chen.
\newblock Fully decoupled and energy stable {BDF schemes for a class of
  Keller-Segel} equations.
\newblock {\em J. Comput. Phys.}, 449:110799, 2022.

\bibitem{wise09a}
S.~M. Wise, C.~Wang, and J.~Lowengrub.
\newblock An energy stable and convergent finite-difference scheme for the
  phase field crystal equation.
\newblock {\em SIAM J. Numer. Anal.}, 47:2269--2288, 2009.

\bibitem{YuanChenWangWiseZhang_JSC21}
M.~Yuan, W.~Chen, C.~Wang, S.~M. Wise, and Z.~Zhang.
\newblock An energy stable finite element scheme for the three-component
  {Cahn–Hilliard-Type} model for macromolecular microsphere composite
  hydrogels.
\newblock {\em J. Sci. Comput.}, 87, 2021.

\bibitem{YuanChenWangWiseZhang_AAMM22}
M.~Yuan, W.~Chen, C.~Wang, S.~M. Wise, and Z.~Zhang.
\newblock A second order accurate in time, energy stable finite element scheme
  for the {Flory-Huggins-Cahn-Hilliard} equation.
\newblock {\em Adv. Appl. Math. Mech.}, 2022.

\bibitem{ZhangWangWiseZhang_SISC20}
J.~Zhang, C.~Wang, S.~M. Wise, and Z.~Zhang.
\newblock Structure-preserving, energy stable numerical schemes for a liquid
  thin film coarsening model.
\newblock {\em SIAM J. Sci. Comput.}, 43:A1248--A1272, 2020.

\bibitem{ZhouSaito_NM2017}
G.~Zhou and N.~Saito.
\newblock Finite volume methods for a {Keller–Segel} system: discrete energy,
  error estimates and numerical blow-up analysis.
\newblock {\em Numer. Math.}, 135(1):265--311, 2017.

\end{thebibliography}

\end{document}